\numberwithin{equation}{section}
\newcommand{\eqindent}{\displayindent0pt\displaywidth\textwidth}
\newtheorem{theorem}{Theorem}[section]
\newtheorem{proposition}[theorem]{Proposition}
\newtheorem{lemma}[theorem]{Lemma}
\newtheorem{corollary}[theorem]{Corollary}
\theoremstyle{definition}
\newtheorem{definition}[theorem]{Definition}
\newtheorem{example}[theorem]{Example}
\newtheorem{convention}[theorem]{Convention}
\theoremstyle{remark}
\newtheorem{remark}[theorem]{Remark}
\newcommand{\norm}[1]{\|#1\|}
\newcommand{\set}[1]{\left\{#1\right\}}
\newcommand{\abs}[1]{\left|#1\right|}
\newcommand{\R}{\mathbb{R}}
\renewcommand{\epsilon}{\varepsilon}
\newcommand{\eps}{\epsilon}
\newcommand{\HD}{\mathcal{D}}
\newcommand{\define}[1]{\textbf{#1}}
\newcommand{\brac}[1]{\left(#1\right)}
\renewcommand{\P}{\mathcal{P}}
\newcommand{\M}{\mathcal{M}}
\newcommand{\C}{\mathcal{C}} 
\newcommand{\Cs}{\mathcal{S}} 
\newcommand{\w}{\mathsf{W}}
\newcommand{\gw}{\mathsf{GW}}
\newcommand{\pgw}{{\mathsf{PGW}}}
\newcommand{\pgwt}{\mathsf{mPGW}}
\newcommand{\spgw}{{\mathsf{sPGW}}}
\newcommand{\dis}{\mathsf{dis}}
\renewcommand{\sharp}{\#}
\definecolor{blackscomment}{RGB}{192,23,118}
\definecolor{kaiyingscomment}{RGB}{194,94,64}
\definecolor{tomscomment}{RGB}{25,15,150}
\title{Metric properties of partial and robust Gromov-Wasserstein distances}
\author[1]{Jannatul Chhoa}
\author[2]{Michael Ivanitskiy}
\author[3]{Fushuai Jiang}
\author[4]{Shiying Li}
\author[5]{Daniel McBride}
\author[6]{Tom Needham}
\author[7]{Kaiying O'Hare}
\affil[1]{University of Houston}
\affil[2]{Colorado School of Mines}
\affil[3]{University of Maryland}
\affil[4]{University of Nebraska - Lincoln}
\affil[5]{University of Tennessee - Knoxville}
\affil[6]{Florida State University}
\affil[7]{Johns Hopkins University}
\date{{\small
MSC 2020 classifications: 54E70, 94A17, 28A33(secondary)
}}
\begin{document}
\maketitle

\begin{abstract}

The Gromov-Wasserstein (GW) distances define a family of metrics, based on ideas from optimal transport, which enable comparisons between probability measures defined on distinct metric spaces. They are particularly useful in areas such as network analysis and geometry processing, as computation of a GW distance involves solving for registration between the objects which minimizes geometric distortion. Although GW distances have proven useful for various applications in the recent machine learning literature, it has been observed that they are inherently sensitive to outlier noise and cannot accommodate partial matching. This has been addressed by various constructions building on the GW framework; in this article, we focus specifically on a natural relaxation of the GW optimization problem, introduced by Chapel et al., which is aimed at addressing exactly these shortcomings. Our goal is to understand the theoretical properties of this relaxed optimization problem, from the viewpoint of metric geometry. While the relaxed problem fails to induce a metric, we derive precise characterizations of how it fails the axioms of non-degeneracy and triangle inequality. These observations lead us to define a novel family of distances, whose construction is inspired by the Prokhorov and Ky Fan distances, as well as by the recent work of Raghvendra et al.\ on robust versions of classical Wasserstein distance. We show that our new distances define true metrics, that they induce the same topology as the GW distances, and that they enjoy additional robustness to perturbations. These results provide a mathematically rigorous basis for using our robust partial GW distances in applications where outliers and partial matching are concerns.
\end{abstract}

\section{Introduction}

Registration is an important preprocessing task in fields such as machine learning, network analysis, and geometry processing, which broadly consists of finding a correspondence between a pair of objects (images, point clouds, graphs, etc.) that optimally aligns the objects' respective geometrical features. This general process was mathematically formalized in the language of metric geometry and optimal transport in the seminal work of M\'emoli \cite{memoli2007,memoli2011-gw}, who introduced the \textit{Gromov-Wasserstein (GW) distances}. In these works, objects to be registered are flexibly modeled as \textit{metric measure spaces} (\textit{mm-spaces}), or triples of the form $(X,d_X,\mu_X)$, where $(X,d_X)$ is a compact metric space and, in this original formulation, $\mu_X$ is a Borel probability measure on $X$. For $p \geq 1$, the \textit{Gromov-Wasserstein $p$-distance} between a pair of mm-spaces is given by the quantity\footnote{In the original papers \cite{memoli2007,memoli2011-gw}, the metric was defined to be $\frac{1}{2}$ of this quantity, since this aligns it with Gromov-Hausdorff distance in a certain sense. We forgo this factor in this paper for the sake of simpler analysis.}
\begin{equation}\label{eq:gw_intro}
\inf_\pi \left(\int_{X \times Y} \int_{X \times Y} |d_X(x,x') - d_Y(y,y')|^p d\pi(x,y) d\pi(x',y') \right)^{1/p},
\end{equation}
where the infimum is taken over \textit{couplings} $\pi$, or joint probability measures on $X \times Y$ whose marginals are $\mu_X$ and $\mu_Y$, respectively. This is shown in \cite{memoli2007} to yield a metric on the space of isomorphism classes of mm-spaces. An important feature of this metric is that its evaluation involves computing a certain notion of registration of the objects $X$ and $Y$; that is, a coupling $\pi$ minimizing \eqref{eq:gw_intro} can be viewed as a `soft correspondence' between $X$ and $Y$, with the quantity $d\pi(x,y)$ representing the amount of mass from $x \in X$ which should be registered to $y \in Y$. Due to the structure of \eqref{eq:gw_intro}, the optimality of such a correspondence is determined by the extent to which it distorts the metric structure of the spaces, on average. 

Due to the ubiquity of the registration problem, and the flexibility of this framework for addressing it, Gromov-Wasserstein distances have become an important tool for applications---see, e.g., \cite{peyre2016gromov,chowdhury2020gromov,xu2019gromov,chowdhury2021generalized,demetci2022scot,govek2023cajal}. However, a drawback of this approach to registration, and to optimal transport-based methods in general, is that it is susceptible to outlier noise and is unable to handle partial matching problems due to the nature of its formulation: couplings necessarily register all of the mass of $X$ to points in $Y$ and vice-versa. To address these potential shortcomings, various notions of \textit{unbalanced}, \textit{partial} or \textit{robust} Gromov-Wasserstein distance have been introduced in the recent literature \cite{chapel2020partial,sejourne2021-conic,Vanderbilt2024-PGW,kong2024outlier}.

This paper was inspired by the rather natural partial Gromov-Wasserstein problem introduced by Chapel et al.\ in \cite{chapel2020partial}, where the objective function of the optimization problem \eqref{eq:gw_intro} remains the same, but the feasible set is expanded to allow joint measures whose marginals may differ from the targets $\mu_X$ and $\mu_Y$ in a controlled manner (see Section \ref{sect: relation to chapel} for a precise formulation). The partial GW problem is successfully applied to positive-unlabeled learning tasks in \cite{chapel2020partial}, but our interest is in its theoretical properties: namely, does the solution of the partial GW optimization problem define a metric in the same way that the solution of the original GW problem does? Perhaps unsurprisingly, the answer is `no', but we present more refined results which shed light on exactly how it fails. Using this structure as a starting point, we define a new distance, which we refer to as the \textit{robust partial Gromov-Wasserstein distance}. We show that robust partial GW distance does define a metric, which induces the same topology as the original GW distance, but which has improved resiliency to noise. Let us now state more precisely our main contributions.

\paragraph{Main Contributions and Outline.} In Section \ref{sec:basics}, we generalize the partial GW problem of \cite{chapel2020partial} (Definition \ref{def:Chapel}) (it was initially defined only for finite spaces and for the  $p=2$ objective function, and we extend it in the obvious way) and define a reformulation that is more natural to work within most of our proofs---the new formulation is in Definition \ref{def:partial_GW} and we establish its equivalence to that of Chapel et al.\ in Proposition \ref{prop:equivalence} and Corollary \ref{cor:equivalence}. When applied to a pair of mm-spaces, the reformulation is denoted $\pgw_{\eps,p}(X,Y)$, where we abuse notation and write $X = (X,d_X,\mu_X)$ and $Y = (Y,d_Y,\mu_Y)$ for mm-spaces, and where the $\eps$ is a hyperparameter controlling the extent to which joint measures can deviate from being couplings of the target measures $\mu_X$ and $\mu_Y$. Furthermore, we introduce a novel variant of this idea in Definition \ref{def:partial_GW}, which gives a more symmetric control over this deviation; this symmetrized version is denoted $\spgw_{\eps,p}$. Theoretical contributions in Section \ref{sec:basics} include a proof that solutions to the optimization problems associated with $\pgw_{\eps,p}$ and $\spgw_{\eps,p}$ are always realized (Theorem \ref{thm:realization}) and a result which states that both $\pgw_{\eps,p}$ and $\spgw_{\eps,p}$ are right-continuous (Theorem \ref{thm:monotone-convergence}) in the hyperparameter $\eps$. In particular, 
as $\eps$ tends to zero, $\pgw_{\eps,p}$ and $\spgw_{\eps,p}$ converge to the classical Gromov-Wasserstein distance (Theorem \ref{thm:GW-convergence}).

The metric properties of $\pgw_{\eps,p}$ and $\spgw_{\eps,p}$ are studied in detail in Section \ref{sec:metric_properties_spgw}. In Corollary \ref{cor: PGW equal 0}, we give a precise characterization of the relationship satisfied by $X$ and $Y$ when $\pgw_{\eps,p}(X,Y)$ or $\spgw_{\eps,p}(X,Y)$ vanish. This follows from Theorem \ref{thm:approxi-nondegen}, which expresses $\pgw_{\eps,p}(X,Y)$ and $\spgw_{\eps,p}(X,Y)$ in terms of classical Gromov-Wasserstein distances between spaces whose masses have been redistributed in a controlled fashion. We show by example that the triangle inequality fails to hold for $\pgw_{\eps,p}$ and $\spgw_{\eps,p}$, but that $\spgw_{\eps,p}$ does satisfy a certain technical, approximate version of the triangle inequality (Theorem \ref{thm:relaxed triangle}). The proof of the approximate triangle inequality requires Lemma \ref{lem:gluing-Linfinity}, which is a generalization of the famous Gluing Lemma from optimal transport theory \cite[Chapter 6]{villani-ot-book}. Using some of these results, we establish in Section \ref{sect: related works} precise connections to other relaxed versions of GW distance that have appeared in the recent literature. 

Using our formulation of $\pgw_{\eps,p}$ as a starting point, we employ in Section \ref{sec:robust_pGW} a construction similar to that of \cite{RSZ2024} (also reminiscent of that of the Prokhorov distance) to define a new distance that is independent of any relaxation parameter $\eps$, which we refer to as \textit{robust partial Gromov-Wasserstein distance}. We show in Corollary \ref{cor:pgwpkmetric} that the robust partial GW distance is a metric on the space of isomorphism classes of compact mm-spaces. This metric is shown to be incomplete for the space of compact \textit{probability metric measure spaces} (\textit{pmm-spaces}) (which are some restricted mm-spaces) in Theorem \ref{thm:incomplete}, to be topologically equivalent to the classical Gromov-Wasserstein distance in Theorem \ref{thm:equivalence_of_topologies}, and to satisfy a robustness property with respect to perturbations in the input data in Corollary \ref{cor:robust}. 

The contributions of this paper focus on the theoretical properties of the various distances. However, the need for partial and noise-robust registration methods in practical data analysis tasks motivates future applications-oriented work on our robust partial GW framework. In particular, follow-up work will focus on stable and efficient numerical implementations of our constructions. To illustrate the potential practical benefits of our framework, we provide some proof-of-concept numerical results in Section \ref{sec:numerics}.

\section{Basics of Partial Gromov-Wasserstein Distances}\label{sec:basics}

This section introduces the main concepts that will be used throughout the paper.

\subsection{Measure Theory Background}

We begin with some background definitions and notation from measure theory. The concepts from this subsection are standard, besides, perhaps, the notions of relaxed couplings defined below.

\paragraph{Measures and derivatives.}

Let $X$ be a measure space. We denote the set of nonnegative measures on $X$ by $\mathcal{M}_+(X)$. Given $\mu \in \mathcal{M}_+(X)$, we use $\abs{\mu}$ to denote the total mass (or equivalently, the total variation) of $\mu$; that is, $\abs{\mu} = \mu(X) = \int_X d\mu$. The set of probability measures on $X$, i.e., $\mu \in \mathcal{M}_+(X)$ with $\abs{\mu}=1$, will be denoted $\mathcal{P}(X)$. 

Given $\mu,\nu \in \mathcal{M}_+(X)$, we use $\frac{d\mu}{d\nu}$ to denote the Radon-Nikodym derivative of $\mu$ with respect to $\nu$. We are interested in the case $\frac{d\mu}{d\nu} \in L^\infty(\nu)$, which is equivalent to saying that $\mu$ can be obtained from $\nu$ by conditioning \cite{Diaconis1982-conditional}. 

We note that two Borel probability measures $\mu$ and $\nu$ satisfy $\norm{\frac{d\mu}{d\nu}}_{L^\infty(\nu)} \leq r \in (1,\infty)$ if and only if $  \mu(A)\leq r\cdot \nu(A)$ for every Borel set $A$, which is the same as $\norm{\frac{d\mu}{d\nu} - 1}_{L^\infty(\nu)}\leq r-1$.

\paragraph{Metric measure spaces.} 

A \define{metric measure space} (\define{mm-space}) \cite{gromov1999metric} is a tuple $(X,d_X,\mu_X)$, where $(X,d_X)$ is a complete and separable metric space and $\mu_X \in \mathcal{M}_+(X)$ (where $X$ is considered as a measure space with respect to its Borel $\sigma$-algebra). When the context is clear, we abuse notation and write $X$ instead of $(X, d_X, \mu_X)$. 

A \define{probability metric measure space} (\define{pmm-space}) is an mm-space with $\mu_X \in \P(X)$. The space $\P(X)$ can be endowed with the usual weak topology (sometimes also called the narrow topology in the literature): a sequence $(\mu_k)\subset\P(X)$ converges to $\mu \in \P(X)$ weakly if $\int_X fd\mu_k \to \int_X fd\mu$ for all bounded continuous functions $f : X \to \R$. For pmm-spaces, Prokhorov's Theorem states that the topology of weak convergence is induced by a metric, and $\P(X)$ is precompact in this topology.  

We say that an mm-space $X$ is \define{compact} if its underlying metric space $(X,d_X)$ is compact. An mm-space is called \define{fully-supported} if the support of its measure $\mu_X$ is all of $X$. We say two mm-spaces $X$ and $Y$ are \define{isomorphic} if there exists a measure-preserving isometric bijection $f:X \to Y$. 

\begin{remark}\label{rem:compact_fully_supported}
    Our results frequently assume that mm- and pmm-spaces are compact and fully supported for the sake of convenience; many results could be generalized at the expense of requiring additional technical qualifiers and notation.
\end{remark}

\paragraph{Couplings and relaxed couplings.}

Let $(X,d_X,\mu_X),(Y,d_Y,\mu_Y)$ be pmm-spaces. We use $p^X:X \times Y \to X$ and $p^Y:X \times Y \to Y$ to denote the canonical projections. Given a measure $\pi \in \mathcal{M}_+(X \times Y)$, we write $\pi_X = p^X_\sharp \pi$ and $\pi_Y = p^Y_\sharp \pi$, where the subscript $\sharp$ denotes the pushforward of a map, i.e., $f_\sharp \mu(S) = \mu(f^{-1}(S))$. We recall that the family of \define{(exact) couplings} of $\mu_X$ and $\mu_Y$ is defined to be
\begin{equation}\label{eqn:couplings}
    \C(\mu_X,\mu_Y) := \set{\pi \in \mathcal{P}(X\times Y) : \pi_X = \mu_X,\, \pi_Y = \mu_Y}.
\end{equation}

With a view toward introducing relaxed notions of optimal transport-based distances, we now define two additional types of couplings, both given by relaxing the equality in \eqref{eqn:couplings}. 

\begin{definition}[Relaxed Couplings]\label{def:relaxed_couplings}
Given $\eps = (\eps_1,\eps_2) \in [0,\infty]^2$ and a pair of probability spaces
$(X,\mu_X),(Y,\mu_Y)$, we define the set of \define{$\epsilon$-relaxed couplings} to be
\begin{equation}
    \C_\eps(\mu_X,\mu_Y) := \set{
    \pi \in \P(X\times Y) : 
    \pi_X \leq (1+\eps_1)\mu_X
    \text{ and }
    \pi_Y \leq (1+\eps_2)\mu_Y
    },
\end{equation}
where the inequality $\pi_X \leq (1+\eps_1)\mu_X$ means that $\pi_X(A) \leq (1+\eps_1)\mu_X(A)$ for all Borel measurable sets $A \subset X$, and likewise for the other inequality. Similarly, we define the set of \define{symmetric $\epsilon$-relaxed couplings} to be
\begin{equation}
    \Cs_{\eps}(\mu_X,\mu_Y) := \set{\pi \in \P(X\times Y)
    :
    \begin{matrix}
    \norm{\frac{d\pi_X}{d\mu_X} }_{L^\infty(\mu_X)},\, \norm{\frac{d\mu_X}{d\pi_X}}_{L^\infty(\pi_X)} \leq 1+\eps_1 ;\\
    \norm{\frac{d\pi_Y}{d\mu_Y} }_{L^\infty(\mu_Y)},\, \norm{\frac{d\mu_Y}{d\pi_Y} }_{L^\infty(\pi_Y)} \leq 1+\eps_2
    \end{matrix}
    }.
    \label{eq:def:eps-coupling}
\end{equation}
\end{definition}
The intuition behind these definitions is that the spaces of $\epsilon$-relaxed couplings allow measures whose marginals are allowed to redistribute mass in a controlled fashion.  

\begin{remark}
    The condition $\norm{\frac{d\mu_X}{d\pi_X}}_{L^\infty(\pi_X)} \leq 1+\eps_1$, in particular, implies that $\pi_X \leq (1+\eps_1)\mu_X$, in the sense described above. It follows that $\mathcal{S}_\eps(\mu_X,\mu_Y) \subset \mathcal{C}_\eps(\mu_X,\mu_Y)$. 
\end{remark}

\begin{remark}\label{rmk:relaxed_at_zero}
Using the shorthand $0=(0,0)$, observe that
\begin{equation}
    \C_{0}(\mu_X,\mu_Y) = \C(\mu_X,\mu_Y) = \Cs_0(\mu_X,\mu_Y).
\end{equation}
Namely, when no mass redistribution is allowed in the marginals, the `relaxed' couplings become exact. 
\end{remark}

\subsection{Partial Gromov-Wasserstein Distances}

We now introduce relaxed versions of the Gromov-Wasserstein distance and derive some basic properties. First, we recall some properties of the Gromov-Wasserstein distance itself.

\paragraph{Gromov-Wasserstein distance.} Let $X$ and $Y$ be pmm-spaces and let $p \in [1,\infty]$. We consider $d_X - d_Y$ as a function on $X \times Y \times X \times Y$, so that the quantity $\norm{d_X - d_Y}_{L^p(\pi\otimes \pi)}$ is well-defined for any measure $\pi \in \mathcal{M}_+(X \times Y)$ (in general, e.g., without a compactness assumption, it may be equal to $\infty$). Explicitly, for $p< \infty$, this is given by  
\begin{equation}
 \norm{d_X - d_Y}_{L^p(\pi\otimes \pi)}=   \brac{\int_{X\times Y}\int_{X\times Y}\abs{d_X(x,x') - d_Y(y,y')}^pd\pi(x,y)d\pi(x',y')}^{1/p}.
\end{equation}

With the above notation, the \define{Gromov-Wasserstein (GW) $p$-distance} between $X$ and $Y$ is given by 
\begin{equation}\label{eqn:gromov-wasserstein}
\gw_p(X,Y) \coloneqq \inf_{\pi \in \C(\mu_X,\mu_Y)}\norm{d_X - d_Y}_{L^p(\pi\otimes \pi)}.
\end{equation}
This notion of distance was introduced by M\'{e}moli in~\cite{memoli2007}, with additional properties established in~\cite{memoli2011-gw}. Particularly important properties are described in the following theorem, which combines results from \cite{memoli2007,memoli2011-gw}.

\begin{theorem}[\hspace{1sp}\cite{memoli2007,memoli2011-gw}]\label{thm:GW_properties}
\begin{enumerate}[(A)]
    \item For compact pmm-spaces $X$ and $Y$, the infimum in \eqref{eqn:gromov-wasserstein} is always realized by a coupling $\pi \in \C(\mu_X,\mu_Y)$ \cite[Corollary 10.1]{memoli2011-gw}. 
    \item Gromov-Wasserstein distance $\gw_p$ defines a pseudometric on the space of compact, fully-supported pmm-spaces, with $\gw_p(X,Y) = 0$ if and only if $X$ and $Y$ are isomorphic \cite[Theorem 5.1(a)]{memoli2011-gw}. 
\end{enumerate}
\end{theorem}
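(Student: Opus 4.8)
The plan is to treat the two assertions separately, since (A) is an existence result obtained by the direct method of the calculus of variations, while (B) is a collection of metric axioms whose verification culminates in a rigidity statement.

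For (A), I would first argue that the feasible set $\C(\mu_X,\mu_Y)$ is compact in the topology of weak convergence. Since $X$ and $Y$ are compact, so is $X \times Y$, and hence every family of probability measures on $X \times Y$ is tight; Prokhorov's Theorem then gives sequential precompactness. One checks that the marginal constraints $\pi_X = \mu_X$ and $\pi_Y = \mu_Y$ are preserved under weak limits (testing against bounded continuous functions pulled back from the two factors), so $\C(\mu_X,\mu_Y)$ is weakly closed, hence weakly compact. Next I would show that the objective $\pi \mapsto \norm{d_X - d_Y}_{L^p(\pi\otimes\pi)}$ is weakly continuous. The key observation is that the map $\pi \mapsto \pi \otimes \pi$ is continuous from the weak topology on $\P(X\times Y)$ to the weak topology on $\P((X\times Y)^2)$, which follows from density of the tensor products of continuous functions; combined with the fact that $(x,y,x',y') \mapsto |d_X(x,x') - d_Y(y,y')|^p$ is bounded and continuous on the compact product space, weak convergence of $\pi_n \otimes \pi_n$ forces convergence of the integrals. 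A minimizing sequence therefore admits a weakly convergent subsequence whose limit realizes the infimum.

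For (B), symmetry and non-negativity are immediate from the definition, and $\gw_p(X,X) = 0$ follows by transporting along the diagonal coupling $(\mathrm{id} \times \mathrm{id})_\sharp \mu_X$. The triangle inequality is the substantive axiom: given spaces $X_1, X_2, X_3$ with optimal couplings $\pi_{12} \in \C(\mu_1,\mu_2)$ and $\pi_{23} \in \C(\mu_2,\mu_3)$, I would invoke the Gluing Lemma to produce a measure $\mu$ on $X_1 \times X_2 \times X_3$ whose $(1,2)$- and $(2,3)$-marginals are $\pi_{12}$ and $\pi_{23}$, then take $\pi_{13}$ to be its $(1,3)$-marginal. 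Writing the integrand as $d_1 - d_3 = (d_1 - d_2) + (d_2 - d_3)$ and applying Minkowski's inequality in $L^p(\mu \otimes \mu)$ yields $\gw_p(X_1,X_3) \leq \gw_p(X_1,X_2) + \gw_p(X_2,X_3)$.

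Non-degeneracy has an easy direction and a hard direction. If $f : X \to Y$ is a measure-preserving isometry, then its graph coupling $(\mathrm{id} \times f)_\sharp \mu_X$ has zero distortion, so $\gw_p(X,Y) = 0$. The converse is where the work lies: from a zero-distortion optimal coupling $\pi$ one must reconstruct an isomorphism. The argument extracts, $\pi$-almost surely, a relation $x \sim y$ along which $d_X(x,x') = d_Y(y,y')$ holds for almost every pair, upgrades this to a genuine distance-preserving map on the supports using separability and continuity of the metrics, and finally uses the fully-supported hypothesis to conclude that this map is an isometric bijection intertwining the measures. I expect this rigidity step---turning an almost-everywhere metric-preservation statement into an honest isomorphism of mm-spaces---to be the main obstacle, as it is precisely the place where the measure-theoretic and metric structures must be reconciled and where the fully-supported assumption becomes essential.
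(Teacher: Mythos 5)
This statement is imported by citation: the paper offers no proof of its own, pointing instead to \cite[Corollary 10.1]{memoli2011-gw} for (A) and \cite[Theorem 5.1(a)]{memoli2011-gw} for (B). Your outline follows essentially the same route as those references --- direct method for existence, gluing plus Minkowski for the triangle inequality, and a rigidity argument extracting an isometry from a zero-distortion coupling --- so there is no divergence of approach to report, only two points worth flagging.

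First, in part (A) you assert that $\pi \mapsto \norm{d_X - d_Y}_{L^p(\pi\otimes\pi)}$ is weakly continuous. That is true for $p < \infty$ (your tensor-product argument is fine there), but the theorem as stated covers $p \in [1,\infty]$, and for $p = \infty$ the distortion functional is only lower semicontinuous under weak convergence, not continuous: an $L^\infty$ norm over $\supp{\pi_n \otimes \pi_n}$ can drop in the limit as the support shrinks. Lower semicontinuity still suffices for the direct method, and indeed the present paper is careful about exactly this distinction in Lemma \ref{lem:distortion-continuous} and in the $p=\infty$ branch of the proof of Theorem \ref{thm:realization}; your argument should make the same case split. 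Second, the converse direction of non-degeneracy in (B) is only sketched: you correctly identify that upgrading the $\pi\otimes\pi$-a.e.\ identity $d_X(x,x')=d_Y(y,y')$ to a genuine measure-preserving isometric bijection is the substantive step (using separability, continuity of the metrics, and full support to pass from the support of $\pi$ to a map), but as written this remains a plan rather than a proof. Neither point is a wrong turn; both are gaps you would need to fill to make the argument complete at the level of generality claimed.
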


\begin{remark}
    Some of the conditions in these results can be weakened---see \cite{chowdhury2019gromov} or \cite{sturm2023space}, where some generalizations are established, or \cite{memoli2024comparison} which surveys properties of the Gromov-Wasserstein distance under various assumptions. As was mentioned in Remark \ref{rem:compact_fully_supported}, we frequently work at the level of generality of~\cite{memoli2011-gw}---that is, compact and fully supported spaces.
\end{remark}

Gromov-Wasserstein distances (and variants thereof---see, e.g., \cite{vayer2020fused,titouan2020co,zhang2024geometry,bauer2024z}) have been widely used in recent data-oriented applications, such as shape analysis \cite{peyre2016gromov,chowdhury2020gromov}, network science \cite{xu2019gromov,chowdhury2021generalized} and computational biology \cite{demetci2022scot,govek2023cajal}. A key feature of the GW framework is that the computation of the distance itself leads to useful information---a coupling minimizing \eqref{eqn:gromov-wasserstein} can be understood as defining a `soft correspondence' between the pmm-spaces. This correspondence can be used to register the points between the spaces, which is crucial for downstream analysis tasks.

The terminology \textit{Gromov-Wasserstein} is intended to signal that the GW distances combine ideas used to define two well-known, classical distances:
\begin{itemize}
    \item The \textit{Gromov-Hausdorff distance} is a metric on the space of isometry classes of compact metric spaces (which have not been endowed with a measure)---see~\cite{burago2001course} for a thorough discussion of its properties. Roughly, its formulation `lifts' the idea of the Hausdorff distance between subsets of a common metric space to the setting of distinct spaces.
    \item The \textit{Wasserstein distance} from classical optimal transport theory (see~\cite{villani-ot-book}) defines a metric on the space of probability distributions over a common metric space. We now recall the details, as they will be useful later in the paper. Let $(M,d)$ be a metric space and let $\mu,\nu$ be two probability measures on $M$. The \define{Wasserstein $p$-distance} between $\mu$ and $\nu$ is defined by 
\begin{equation*}
    \w_p(\mu,\nu) \coloneqq \inf_{\pi \in \mathcal{C}(\mu,\nu)} \brac{\int_{M}d(m,m')^pd\pi(m,m')}^{1/p} = \inf_{\pi \in \mathcal{C}(\mu,\nu)} \|d\|_{L^p(\pi)},
\end{equation*}
for $p \in [1,\infty)$, with the definition extending naturally to the $p=\infty$ case.
\end{itemize}
In reference to the ideas described above, the \textit{Gromov-Wasserstein distance} can be seen as a `Gromov-ization' of the Wasserstein distance, in the sense that it lifts Wasserstein distances to the setting where measures on distinct metric spaces can be compared. Given two pmm-spaces $X = (M,d,\mu)$ and $Y = (M,d,\nu)$ having the same underlying metric space $(M,d)$, we always have
\begin{equation}
    \gw_p(X,Y) \leq 2 \cdot \w_p(\mu,\nu)
    \label{eq:GWp-Wp}
\end{equation}
(see~\cite[Theorem 5.1(c)]{memoli2011-gw}, noting the difference in normalization of $\gw_p$ in the current paper).

\paragraph{Relaxing the Gromov-Wasserstein distance.} The registration properties of GW distances described above (i.e., through probabilistic matchings in the guise of optimal couplings) are vulnerable to outliers in the data. As an example, suppose that in the process of acquiring pmm-space datasets $X$ and $Y$, measurement error leads to additional `noisy' points being included in the supports of $\mu_X$ and $\mu_Y$; since the formulation of GW distance involves couplings which match the measures exactly (i.e., exact couplings), the resulting registration will necessarily fit the noise. This observation motivates one to alter the formulation of the distance to allow for relaxed notions of coupling that avoid the noise-fitting issue---indeed, several notions of \textit{partial} or \textit{unbalanced} Gromov-Wasserstein distance have been introduced in the recent literature to address this issue, and related issues of non-robustness \cite{chapel2020partial,sejourne2021-conic,Vanderbilt2024-PGW,kong2024outlier}.

We now introduce a variant of GW which is relaxed in a rather straightforward manner: we simply expand the feasible set to allow $\epsilon$-relaxed couplings, as in Definition \ref{def:relaxed_couplings}. 

\begin{definition}[Partial Gromov-Wasserstein Distances]\label{def:partial_GW}
    Let $p \in [1,\infty]$, let $\epsilon = (\epsilon_1,\epsilon_2) \in [0,\infty]^2$, and let $X$ and $Y$ be pmm-spaces.  We define the associated \define{partial Gromov-Wasserstein $p$-distance} between $X$ and $Y$ to be 
    \[
    \pgw_{\eps,p}(X,Y) \coloneqq \inf_{\pi \in \C_{\eps}(\mu_X,\mu_Y)}\norm{d_X - d_Y}_{L^p(\pi\otimes \pi)}.
    \]
    Similarly, we define the \define{symmetrized partial Gromov-Wasserstein $p$-distance} to be 
    \[
    \spgw_{\eps,p}(X,Y) \coloneqq \inf_{\pi \in \Cs_{\eps}(\mu_X,\mu_Y)}\norm{d_X - d_Y}_{L^p(\pi\otimes \pi)}.
    \]
\end{definition}

\begin{remark}
    The construction of $\pgw$ is a generalization of the \textit{partial Gromov-Wasserstein problem} proposed by Chapel et al.\ in \cite{chapel2020partial}, as we explain in more detail below in Remark \ref{rem:special cases} and Section \ref{sect: relation to chapel}. We note that the Chapel et al.\ version is defined on the space of general mm-spaces, whose measures have arbitrary total mass. The justification for our reformulation is that it makes certain proofs easier, and allows more immediate comparisons to other notions of partial GW distance in the literature---see Section \ref{sect: related works}. 
\end{remark}

Recall the comment after Definition \ref{def:relaxed_couplings}. The intuition is that $\pgw_{\eps,p}$ and $\spgw_{\eps,p}$ behave similarly to $\gw_p$, with the difference being that the couplings allow for controlled rearrangement of mass. If the relaxation level is appropriately matched to the noise level in the data, this setup is intended to allow the distances to mollify the effect of outliers in the inferred registrations. 

\begin{remark}
    We note that we refer to $\pgw_{\eps,p}$ and $\spgw_{\eps,p}$ as `distances' only in a colloquial sense: they are not, in general, pseudometrics, as we establish below in Section \ref{sec:metric_properties_spgw}. To be more precise, we could refer to them as \textit{divergences} or \textit{dissimilarity functions}, but we prefer to stick with the \textit{distance} terminology, which we find to be more linguistically natural.
\end{remark}

\begin{remark}\label{rem:special cases}
    The following describes some special cases for the relaxed distances $\pgw$ and $\spgw$.
    \begin{enumerate}[(A)]
        \item For $\eps = (0,0)$, 
        \[
        \pgw_{(0,0),p}(X,Y) = \spgw_{(0,0),p} = \gw_p(X,Y).
        \]
        Indeed, this follows immediately from Remark \ref{rmk:relaxed_at_zero}.
       
        \item 
        For $\eps= (\eps_1,\eps_2) = (0,\infty)$,
        \begin{equation*}
        \eqindent
        \spgw_{(0,\infty),p}(X,Y) = \inf\set{
        \norm{d_X - d_Y}_{L^p(\pi\otimes \pi)} : \pi \in \P(X\times Y),\, \pi_X = \mu_X
        }.
    \end{equation*}
    That is, the marginal $\pi_X$ is strictly required to be $\mu_X$, while the marginal $\pi_Y$ is unrestricted. This recovers the \textit{semi-relaxed Gromov-Wasserstein distance}, first introduced in \cite{vincent2021semi}. In \cite{clark2024generalized}, the authors also showed that $\spgw_{(0,\infty),p}$ satisfies the triangle inequality if and only if $p = \infty$, and that, in this case, a symmetrized version agrees with the \textit{modified Gromov-Hausdorff distance} studied in \cite{memoli2012some}. See also Theorem \ref{thm:relaxed triangle} below.
    \item When $\eps_1=\eps_2 \in (0,\infty)$ and $p=2$, the distance is essentially the same as the partial Gromov-Wasserstein distance introduced by Chapel et al.\ in \cite{chapel2020partial}. The exact connection between the construction of Chapel et al.\ and those of the present paper is elucidated below in Section~\ref{sect: relation to chapel}.
    \end{enumerate}
\end{remark}

\subsection{Basic properties of \texorpdfstring{$\pgw$}{PGW} and \texorpdfstring{$\spgw$}{SPGW}}

It is natural to ask whether the properties of GW distance described in Theorem \ref{thm:GW_properties} extend to the partial GW variants introduced in Definition \ref{def:partial_GW}. We show in this subsection that the existence of optimal (relaxed) couplings (i.e., the analogue of Theorem \ref{thm:GW_properties}(A)) does hold. The metric properties of $\pgw$ and $\spgw$ (cf.\ Theorem \ref{thm:GW_properties}(B)) are more subtle, and a precise discussion is deferred to Section \ref{sec:metric_properties_spgw}. We also show in this subsection that the GW distance is recovered as a limit of partial GW distances as the relaxation parameters decay to zero.

\paragraph{Existence of optimal couplings.} We now show that the infima in the optimization problems defining $\pgw_{\eps,p}$ and $\spgw_{\eps,p}$ are always realized by optimal couplings; that is, infima are actually minima. This is the analog of Theorem \ref{thm:GW_properties}(A) in the GW setting. The proof is more subtle than in the case of exact couplings and requires an additional ingredient for the case $p = \infty$. Before we state the result, we introduce the notion of distortion to simplify notation.

\begin{definition}[\hspace{-1sp}\cite{chowdhury2019gromov}]\label{def:distortion}
    
For $p \in [1,\infty]$ and pmm-spaces $X$ and $Y$, we define the $p$-distortion of a coupling $\pi \in \P(X\times Y)$ to be
\begin{equation*}
    \dis_p(\pi):= \norm{d_X - d_Y}_{L^p(\pi\otimes \pi)}.
\end{equation*}
\end{definition}

\begin{lemma}\label{lem:distortion-continuous}
    Fix $\eps \geq 0$ and pmm-spaces $X$ and $Y$. 
    \begin{enumerate}[(A)]
        \item For $1\leq p < \infty$, the function $\dis_p(\cdot)$ is continuous on $\C_\eps(\mu_X,\mu_Y)$ and on $\Cs_\eps(\mu_X,\mu_Y)$. 
        \item For $p = \infty$, $\dis_p(\cdot)$ is lower-semicontinuous on $\C_\eps(\mu_X,\mu_Y)$ and on $\Cs_\eps(\mu_X,\mu_Y)$.
    \end{enumerate}
\end{lemma}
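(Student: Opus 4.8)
The plan is to equip $\P(X\times Y)$ with the weak topology---which, since $X$ and $Y$ are compact metric, is compact and metrizable, so that I may argue with sequences---and to prove both statements on all of $\P(X\times Y)$. The assertions on the subsets $\C_\eps(\mu_X,\mu_Y)$ and $\Cs_\eps(\mu_X,\mu_Y)$ then follow immediately by restriction, so the relaxation parameter $\eps$ will play no role in the argument.

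For part (A), I would fix $1\le p<\infty$ and set $F\coloneqq|d_X-d_Y|^p$, regarded as a function on $(X\times Y)^2$. Compactness of $X$ and $Y$ makes $d_X$ and $d_Y$ bounded and continuous, so $F$ is bounded and continuous. The key step is the (standard) continuity of the map $\pi\mapsto\pi\otimes\pi$ from $\P(X\times Y)$ to $\P((X\times Y)^2)$ in the respective weak topologies; I would verify this by testing against product functions $(s,t)\mapsto f(s)g(t)$, for which the integral factors as $\brac{\int f\,d\pi}\brac{\int g\,d\pi}$, and then extending to all of $C((X\times Y)^2)$ via Stone--Weierstrass and uniform boundedness. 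Granting this, $\pi_k\to\pi$ weakly implies $\pi_k\otimes\pi_k\to\pi\otimes\pi$ weakly, and testing against the bounded continuous $F$ gives $\dis_p(\pi_k)^p\to\dis_p(\pi)^p$; taking $p$-th roots then yields continuity of $\dis_p$.

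For part (B), my plan is to represent $\dis_\infty$ as a supremum of the continuous functions from part (A). For any probability measure $\rho$ and bounded measurable $g$, Jensen's inequality shows that $q\mapsto\norm{g}_{L^q(\rho)}$ is nondecreasing on $[1,\infty)$ with $\lim_{q\to\infty}\norm{g}_{L^q(\rho)}=\norm{g}_{L^\infty(\rho)}$. Applying this with $\rho=\pi\otimes\pi$ (a probability measure, since $\pi$ is one) and $g=|d_X-d_Y|$ would give
\[
\dis_\infty(\pi)=\sup_{1\le q<\infty}\dis_q(\pi).
\]
Since each $\dis_q$ is continuous, hence lower semicontinuous, by part (A), and since a pointwise supremum of lower semicontinuous functions is lower semicontinuous---the superlevel set $\set{\dis_\infty>c}=\bigcup_{q}\set{\dis_q>c}$ is open---the lower semicontinuity of $\dis_\infty$ follows.

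I expect the only genuinely delicate point to be understanding why part (B) yields only lower semicontinuity and not continuity: a sequence $\pi_k$ can carry a vanishing amount of mass on a region of large distortion, so that $\dis_\infty(\pi_k)$ stays large while the weak limit $\pi$ has small distortion. This is exactly the obstruction to upper semicontinuity, and it is why I would route the argument through the supremum representation rather than attempt to interchange the weak limit with the essential supremum directly. The remaining ingredients---continuity of $\pi\mapsto\pi\otimes\pi$ and the $L^q\uparrow L^\infty$ limit for finite measures---are routine.
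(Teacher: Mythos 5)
Your argument is correct and is essentially the proof the paper has in mind: the paper omits the details and defers to \cite[Lemma 11]{chowdhury2019gromov}, whose proof runs exactly along your lines (weak continuity of $\pi\mapsto\pi\otimes\pi$ tested against the bounded continuous integrand $|d_X-d_Y|^p$ for $p<\infty$, and lower semicontinuity of $\dis_\infty$ via the representation $\dis_\infty=\sup_{q<\infty}\dis_q$). Your closing remark correctly identifies why upper semicontinuity fails at $p=\infty$.
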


The proof of Lemma \ref{lem:distortion-continuous} is almost identical to the proof of \cite[Lemma 11]{chowdhury2019gromov}. We omit the details here.

\begin{theorem}[Existence of Optimal Relaxed Couplings]\label{thm:realization}
    For every $\eps = (\eps_1,\eps_2)$, $1\leq p \leq \infty$, and compact pmm-spaces $X,Y$, there exist $\pi \in \C_{\eps}(\mu_X,\mu_Y)$ and $\gamma \in \Cs_{\eps}(\mu_X,\mu_Y)$ such that 
    \[
    \norm{d_X - d_Y}_{L^p(\pi\otimes \pi)} = \pgw_{\eps,p}(X,Y) \quad \mbox{and} \quad \norm{d_X -d_Y}_{L^p(\gamma\otimes \gamma)} = \spgw_{\eps,p}(X,Y).
    \]
\end{theorem}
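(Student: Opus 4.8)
The plan is to prove both existence statements by the direct method of the calculus of variations: I will show that each feasible set, $\C_\eps(\mu_X,\mu_Y)$ and $\Cs_\eps(\mu_X,\mu_Y)$, is a nonempty, weakly compact subset of $\P(X \times Y)$, and then combine this with the (lower semi)continuity of $\dis_p$ recorded in Lemma~\ref{lem:distortion-continuous} to conclude that the infimum is attained. Nonemptiness is immediate: the product measure $\mu_X \otimes \mu_Y$ is an exact coupling, and exact couplings satisfy $\C(\mu_X,\mu_Y) \subseteq \Cs_\eps(\mu_X,\mu_Y) \subseteq \C_\eps(\mu_X,\mu_Y)$.

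First I would establish weak compactness. Because $X$ and $Y$ are compact, the product $X \times Y$ is compact, so Prokhorov's Theorem gives that the full space $\P(X \times Y)$ is weakly compact; it therefore suffices to show that $\C_\eps(\mu_X,\mu_Y)$ and $\Cs_\eps(\mu_X,\mu_Y)$ are weakly \emph{closed}. Suppose $\pi^{(n)} \to \pi$ weakly with each $\pi^{(n)}$ feasible. Since the projections $p^X, p^Y$ are continuous, the marginals converge weakly, $\pi^{(n)}_X \to \pi_X$ and $\pi^{(n)}_Y \to \pi_Y$. For the upper constraint $\pi_X \leq (1+\eps_1)\mu_X$, I would use the open-set form of the Portmanteau theorem: for every open $U$, $\pi_X(U) \leq \liminf_n \pi^{(n)}_X(U) \leq (1+\eps_1)\mu_X(U)$, and then pass to arbitrary Borel sets by outer regularity of $\mu_X$. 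This already shows that $\C_\eps$ is weakly closed, hence compact.

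The more delicate point — which I expect to be the main obstacle — is the \emph{lower} bound implicit in the symmetric constraint defining $\Cs_\eps$. As noted after Definition~\ref{def:relaxed_couplings}, the pair of conditions $\norm{\frac{d\pi_X}{d\mu_X}}_{L^\infty(\mu_X)}, \norm{\frac{d\mu_X}{d\pi_X}}_{L^\infty(\pi_X)} \leq 1+\eps_1$ is equivalent to the two-sided sandwich $\frac{1}{1+\eps_1}\mu_X \leq \pi_X \leq (1+\eps_1)\mu_X$, the two-sided inequality in turn forcing the required mutual absolute continuity. Weak convergence controls open sets from below but closed sets from above, so the lower inequality must be handled through the closed-set form of Portmanteau: for every closed $F$, $\pi_X(F) \geq \limsup_n \pi^{(n)}_X(F) \geq \frac{1}{1+\eps_1}\mu_X(F)$ (using that each $\pi^{(n)}_X$ and $\mu_X$ is a probability measure, so total masses match and the inequality applies), and then I would pass to arbitrary Borel sets by inner regularity of $\mu_X$. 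Combining the two directions, and the analogous statements for the $Y$-marginal, shows the sandwich survives in the limit, so $\Cs_\eps$ is weakly closed, hence compact.

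With compactness of the feasible sets in hand, the conclusion splits by $p$. For $1 \leq p < \infty$, $\dis_p$ is continuous on the feasible set by Lemma~\ref{lem:distortion-continuous}(A), so it attains its minimum on the compact set, yielding the desired $\pi \in \C_\eps(\mu_X,\mu_Y)$ and $\gamma \in \Cs_\eps(\mu_X,\mu_Y)$. For $p = \infty$ — the case needing the extra ingredient — $\dis_\infty$ is only lower semicontinuous by Lemma~\ref{lem:distortion-continuous}(B); here I would invoke the generalized Weierstrass theorem, taking a minimizing sequence, extracting a weakly convergent subsequence by compactness, and using lower semicontinuity to bound the value at the limit by the infimum. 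This final step is exactly what upgrades the infimum to a minimum in the $L^\infty$ setting, completing both statements.
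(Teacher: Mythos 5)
Your proposal is correct and follows essentially the same route as the paper: both are the direct method via Prokhorov's theorem, weak closedness of the constraint sets, and the continuity/lower-semicontinuity dichotomy of Lemma~\ref{lem:distortion-continuous} for $p<\infty$ versus $p=\infty$. The only cosmetic difference is that you package the closedness of the marginal constraints through the open- and closed-set forms of the Portmanteau theorem (plus outer/inner regularity), whereas the paper unwinds the same argument by hand with Urysohn's lemma; your treatment of the lower bound in $\Cs_\eps$ via the closed-set inequality is in fact slightly more explicit than the paper's, which only gestures at it.
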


\begin{proof}
    Let $(\pi_k) \subset \C_{\eps}(\mu_X,\mu_Y)$ and $(\gamma_k)\subset \Cs_\eps(X,Y)$ be  sequences of relaxed couplings such that 
    \begin{equation*}
        \norm{d_X - d_Y}_{L^p(\pi_k\otimes \pi_k)} \to \pgw_{\eps,p}(X,Y)
        \text{ and }
        \norm{d_X - d_Y}_{L^p(\gamma_k\otimes \gamma_k)} \to \spgw_{\eps,p}(X,Y).
        \label{eq:realization-conv}
    \end{equation*}
    By Prokhorov's Theorem, we can extract a subsequence from each, still called $(\pi_k)$ and $(\gamma_k)$, with 
    \begin{equation*}
        \pi_k \to \pi \in \P(X\times Y)
        \text{ and }
        \gamma_k \to \gamma \in \P(X\times Y) \text{ weakly}.
    \end{equation*}

    First, we verify that 
    \begin{equation}
    \pi \in \C_{\eps}(\mu_X,\mu_Y)
    \text{ and }
    \gamma \in \Cs_\eps(\mu_X,\mu_Y).
        \label{eq:realization-0}
    \end{equation}
    We will only check that $\pi_{X} := (p^X)_\sharp \pi \leq (1+\eps_1)\mu_X$, since the rest are similar (the inner regularity argument for $\C_{\eps}$ below is replaced with both inner and outer regularity for $\Cs_\eps$ coupling). 
    
    By definition, $\pi_k \in \C_\eps(\mu_X,\mu_Y)$ implies that $(p^X)_\sharp \pi_k \leq (1+\eps_1)\mu_X$. Let $A \subset X$ be Borel and let $\eta > 0$. By the outer regularity of $\mu_X$, there exists a slightly enlarged neighborhood $A_\eta \supset A$  satisfying
    \begin{equation}
        \mu_X(A_\eta \setminus A) \leq \eta.
        \label{eq:realization-eta}
    \end{equation}
    Thanks to Urysohn's Lemma, there exists a continuous function $\phi$ on $X\times Y$ satisfying
    \begin{equation}
        \mathbb{I}_{A\times Y} \leq \phi \leq \mathbb{I}_{A_\eta\times Y}
        \label{eq:realization-indicator}
    \end{equation}
    where $\mathbb{I}_S$ is the indicator function of a set $S$.
    On the one hand, \eqref{eq:realization-indicator} implies
    \begin{equation}
        \pi_X(A) = \pi(A\times Y) = \int_{X\times Y} \mathbb{I}_{A\times Y} d\pi \leq \int_{X\times Y} \phi d\pi.
        \label{eq:realization-1}
    \end{equation}
    On the other hand, we can use \eqref{eq:realization-eta} and the marginal control on the $\pi_k$'s to obtain
    \begin{equation}
       \begin{split}
            \int_{X\times Y} \phi d\pi 
            &= \lim_{k\to\infty}\int_{X\times Y} \phi d\pi_k 
            \\
            &= \lim_{k \to \infty}\brac{
            \int_{X\times Y} (\phi - \mathbb{I}_{A\times Y})d\pi_k
            + \pi_k(A\times Y)
            }\\
            &\leq (1+\eps_1)\mu_X(A) + \lim_{k\to\infty}\int_{(A_\eta\setminus A)\times Y} d\pi_k\\
            &\leq (1+\eps_1)\mu_X(A) + (1+\eps_1)\mu_X(A_\eta \setminus A)\\
            &\leq (1+\eps_1)\mu_X(A) + (1+\eps_1)\eta.
       \end{split}
       \label{eq:realization-2}
    \end{equation}
    Since $A\subset X$ and $\eta > 0$ are arbitrary, we can conclude from \eqref{eq:realization-1} and \eqref{eq:realization-2} that $\pi_X \leq (1+\eps_1)\mu_X$.

    Next, we show that $\pi$ and $\gamma$ are optimal. If $1 \leq p < \infty$, weak convergence implies that
    \begin{equation}
        \norm{d_X - d_Y}_{L^p(\pi\otimes \pi)} = \pgw_{\eps,p}(X,Y)
        \text{ and }
        \norm{d_X - d_Y}_{L^p(\gamma\otimes \gamma)} = \spgw_{\eps,p}(X,Y).
        \label{eq:realization-prokh-1}
    \end{equation}
    Suppose $p = \infty$. Thanks to Lemma \ref{lem:distortion-continuous}(B), we have
    \begin{equation*}
        \begin{split}
            \pgw_{\eps,\infty}(X,Y) &= \liminf_{k \to \infty}\dis_\infty(\pi_k) \geq \dis_\infty(\pi) = \norm{d_X - d_Y}_{L^\infty(\pi\otimes \pi)}
        \text{, and }\\
        \spgw_{\eps,\infty}(X,Y) &= \liminf_{k \to \infty}\dis_\infty(\gamma_k) \geq \dis_\infty(\gamma) = \norm{d_X - d_Y}_{L^\infty(\gamma\otimes \gamma)}.
        \end{split}
    \end{equation*}
    By the minimality of $\pgw_{\eps,\infty}$ and $\spgw_{\eps,\infty}$, we can conclude that
    \begin{equation}
        \norm{d_X - d_Y}_{L^\infty(\pi\otimes \pi)} = \pgw_{\eps,\infty}(X,Y)
        \text{ and }
        \norm{d_X - d_Y}_{L^\infty(\gamma\otimes \gamma)} = \spgw_{\eps,\infty}(X,Y).
        \label{eq:realization-prokh-2}
    \end{equation}

    The conclusion of the proposition follows from \eqref{eq:realization-0}, \eqref{eq:realization-prokh-1}, and \eqref{eq:realization-prokh-2}.
\end{proof}

\paragraph{Convergence in parameters.} 
We move to investigate the dependence of partial GW distances on the relaxation parameters. 
We begin with a fundamental result which states that the maps $\eps \mapsto \pgw_{\eps,p}(\cdot,\cdot)$ and $\eps \mapsto \spgw_{\eps,p}(\cdot,\cdot)$ are lower semicontinuous in $\eps$ (Theorem \ref{thm:lower-semicontinuity} below). By Remark \ref{rem:special cases} (A), the partial GW distances agree with the GW distance when the relaxation parameters vanish, i.e., when $\eps = (0,0)$. Theorem \ref{thm:GW-convergence} below demonstrates the intuitive result that the distances converge as the relaxation parameters decay to zero. Finally, Theorem \ref{thm:monotone-convergence} below states that the partial GW distances always converge with respect to monotone-decreasing parameters (see the definition of monotone-decreasing before Theorem \ref{thm:monotone-convergence}).

\begin{theorem}[Lower Semicontinuity in the Relaxation Parameters]\label{thm:lower-semicontinuity}
    Let $p \in [1,\infty]$ and let $X,Y$ be compact pmm-spaces. Then the maps $[0,\infty)^2 \mapsto \pgw_{\eps,p}(X,Y)$ and $[0,\infty)^2 \mapsto \spgw_{\eps,p}(X,Y)$ are both lower semicontinuous. 
\end{theorem}

\begin{proof}
    We prove the theorem for $\pgw_{\eps,p}$. The case for $\spgw_{\eps,p}$ is similar, 
    
    Fix an arbitrary sequence $\eps_n \to \eps_0$. We need to show that
    \begin{equation}
\liminf_{n\to\infty}\pgw_{\eps_n,p}(X,Y)\geq \pgw_{\eps_0,p}(X,Y).
        \label{eq:eps-continuity-2}
    \end{equation}

    Set $\delta_n := \pgw_{\eps_n,p}(X,Y)$ and consider an arbitrary convergent subsequence $(\delta_{n_k})\subset (\delta_n)$. Let $\pi_k \in \C_{\eps_{n_k}}(\mu_X,\mu_Y)$ be a measure realizing $\delta_{n_k} = \pgw_{\eps_{n_k},p}(X,Y)$ for each $k$. By Prokhorov's theorem, we may replace $\pi_k$ by a subsequence, denoted $\pi_{k_{j}}$, such that $\pi_{k_j} \to \pi \in \P(X\times Y)$ weakly. We also have
    \begin{equation}
        \pi \in \C_{\eps_0}(X,Y) \text{ and }
        \pgw_{\eps_0,p}(X,Y)\leq \norm{d_X - d_Y}_{L^p(\pi\otimes \pi)}.
        \label{eq:eps-continuity-5}
    \end{equation}

    For $1 \leq p < \infty$, the weak convergence of $(\pi_{k_j})$ implies
    \begin{equation}
        \lim_{j\to\infty}\delta_{n_{k_j}} = \lim_{j\to\infty}\pgw_{\eps_{n_{k_j}},p}(X,Y) = 
        \lim_{j\to\infty}\norm{d_X - d_Y}_{L^p(\pi_{k_j}\otimes\pi_{k_j})}
        = \norm{d_X - d_Y}_{L^p(\pi\otimes \pi)}.
        \label{eq:eps-continuity-3}
    \end{equation}
    For $p = \infty$, Lemma \ref{lem:distortion-continuous}(B) implies
    \begin{equation}
        \lim_{j\to\infty}\delta_{n_{k_j}} = \lim_{j \to \infty}\pgw_{\eps_{n_{k_j}},\infty}(X,Y) = \liminf_{j\to\infty}\dis_\infty(\pi_{k_j}) \geq \dis_\infty(\pi) = \norm{d_X - d_Y}_{L^\infty(\pi\otimes \pi)}.
        \label{eq:eps-continuity-3'}
    \end{equation}
    
    Since $(\delta_{n_k})_{k=1}^{\infty}$ is convergent, we conclude from \eqref{eq:eps-continuity-3} and \eqref{eq:eps-continuity-3'} that, for $p \in [1,\infty]$,
    \begin{equation}
        \lim_{k \to \infty}\delta_{n_k} = \lim_{k \to \infty}\pgw_{\eps_{n_k}, p}(X,Y) \geq \norm{d_X - d_Y}_{L^p(\pi \otimes \pi)}.
        \label{eq:eps-continuity-4}
    \end{equation}
We see from \eqref{eq:eps-continuity-5} and
\eqref{eq:eps-continuity-4} that
    \begin{equation}
        \lim_{k\to\infty}\pgw_{\eps_{n_k},p}(X,Y) \geq \pgw_{\eps_0,p}(X,Y).
    \end{equation}
    Since the choice of the convergent subsequence $(\delta_{n_k})_{k = 1}^{\infty}$ is arbitrary, we see that \eqref{eq:eps-continuity-2} holds. 
\end{proof}

\begin{theorem}[Convergence to $\gw_p$]\label{thm:GW-convergence}
    Let $p \in [1,\infty]$ and let $X,Y$ be compact pmm-spaces. The maps $[0,\infty)^2 \mapsto \pgw_{\eps,p}(X,Y)$ and $[0,\infty)^2 \mapsto \spgw_{\eps,p}(X,Y)$ are continuous at $\eps = (0,0)$. Moreover,
    \begin{equation}
    \lim_{n\to\infty}\pgw_{\eps_n,p}(X,Y)= \gw_p(X,Y) = \lim_{n\to\infty}\spgw_{\eps_n,p}(X,Y).
    \end{equation}
\end{theorem}

\begin{proof}
    We prove the theorem for $\pgw_{\eps,p}$. The case for $\spgw_{\eps,p}$ is similar. Let $\eps_n \to (0,0)$. Since $\pgw_{\eps,p}(X,Y) \geq \pgw_{(0,0)}(X,Y)$, we have
    \begin{equation}
    \limsup_{n\to\infty}\pgw_{\eps_n,p}(X,Y)\leq \pgw_{(0,0)}(X,Y) = \gw_p(X,Y).
    \label{eq:limsup-lowerbound-1}
    \end{equation}
where the last equality is due to Remark \ref{rem:special cases}(A). The theorem follows from \eqref{eq:eps-continuity-2} in the proof of Theorem \ref{thm:lower-semicontinuity} and \eqref{eq:limsup-lowerbound-1}.
\end{proof}


Let $\eps_n = (\eps_n^{(1)}, \eps_n^{(2)})$ be a sequence in $\R^2$. We say $\eps_n$ is monotone-decreasing if $\eps_n^{(i)} \geq \eps_{n+1}^{(i)}$ for all $n$ and $i =1,2$, i.e., $\eps_n$ is monotone decreasing in each component.

\begin{theorem}[Monotone Convergence in the Relaxation Parameters]\label{thm:monotone-convergence}
    Let $p \in [1,\infty]$.
    Let $\eps_n \in [0,\infty)^2$ be a monotone-decreasing sequence converging to $\eps_0 \in [0,\infty)^2$ and let $X,Y$ be compact pmm-spaces. Then
    \begin{align}
        \lim_{n\to\infty}\pgw_{\eps_n,p}(X,Y)&= \pgw_{\eps_0,p}(X,Y),\text{ and }
        \label{eq:eps-continuity-0}
        \\
        \lim_{n\to\infty}\spgw_{\eps_n,p}(X,Y) &= \spgw_{\eps_0,p}(X,Y).
        \label{eq:eps-continuity-0sym}
    \end{align}
\end{theorem}

\begin{proof}
    We will prove \eqref{eq:eps-continuity-0} only. The proof of \eqref{eq:eps-continuity-0sym} is similar. Since $\eps_n$ is monotone-decreasing, we see from the definition of $\C_{\eps}$ that $\C_{\eps_n}(\mu_X,\mu_Y)\supset \C_{\eps_{n+1}}(\mu_X,\mu_Y)$, so $\pgw_{\eps_n,p}(X,Y)\leq \pgw_{\eps_0,p}(X,Y)$ for all $n$, so 
\begin{equation}
\limsup_{n\to\infty}\pgw_{\eps_n,p}(X,Y)\leq \pgw_{\eps_0,p}(X,Y).
\label{eq:eps-continuity-1}
\end{equation}
The theorem follows from \eqref{eq:eps-continuity-2} in the proof of Theorem \ref{thm:lower-semicontinuity} and \eqref{eq:eps-continuity-1}.
    
\end{proof}

\section{Approximate metric properties for \texorpdfstring{$\pgw$}{PGW} and \texorpdfstring{$\spgw$}{sPGW}}\label{sec:metric_properties_spgw}

When the partial Gromov-Wasserstein problem was introduced in \cite{chapel2020partial} (as referenced in the introduction; see Definition \ref{def:Chapel} for details), its metric properties, such as whether it satisfies the triangle inequality, were not addressed (to be clear, \cite{chapel2020partial} makes no claims that their framework induces a pseudometric, and only considers it as an optimization problem with interesting applications). An initial motivation for the present article was to study the metric structure of the partial Gromov-Wasserstein problem, which is equivalent to studying it for $\pgw_{\eps,p}$---see  Proposition \ref{prop:equivalence} below.

In this section, we study the metric properties of $\pgw$ and $\spgw$. As a trivial point, it is clear that if $\eps_1 = \eps_2$, then $\pgw_{\eps,p}(X,Y) = \pgw_{\eps,p}(Y,X)$ for all $X$ and $Y$. This section will therefore focus on non-degeneracy and triangle inequalities for the distances. We show that both properties fail, but give more refined results which demonstrate that they do satisfy approximate versions.

\subsection{Approximate Non-Degeneracy}

The next theorem gives a characterization of the partial Gromov-Wasserstein distances in terms of Gromov-Wasserstein distances between pmm-spaces with adjusted mass. It will be followed by a corollary that addresses the extent to which the distance $\pgw$ is \textit{degenerate}, in the sense that non-isomorphic pmm-spaces can have zero distance. To state the theorem, we introduce some additional concepts.

\begin{definition}[Eccentricity and Circumradius; c.f.\ {\cite[Definition 5.3]{memoli2011-gw}} and {\cite[Definition 2.1]{memoli2012some}}]\label{def:ecc and rad}
    Let $X = (X,d_X,\mu_X)$ be an mm-space and $p \in [1,\infty]$. The \define{$p$-eccentricity function of $X$} is the function
    \begin{align*}
        \mathsf{ecc}_p^X:X &\to \R \\
        x &\mapsto \|d_X(x,\cdot)\|_{L^p(\mu_X)}.
    \end{align*}
    We define the \define{$p$-circumradius of $X$} to be the quantity
    \[
    \mathsf{rad}_p(X) = \inf_{x \in X} \mathsf{ecc}_p^X(x).
    \]
\end{definition}

\begin{definition}[$\eps$-Relations]\label{def:eps-relation}
    Let $X = (X,d_X,\mu_X)$ be a pmm-space and let $\Tilde{X} = (X,d_X,\Tilde{\mu}_X)$ be another mm-space with the same underlying metric space. For $\eps \in [0,\infty]$, we write $\Tilde{X} \sim_\eps X$ if $\Tilde{\mu}_X \leq (1+\eps) \mu_X$. Similarly, we write $\Tilde{X} \sim_\eps^s X$ if
    \[
    \left\| \frac{d\tilde{\mu}_X}{d\mu_X}\right\|_{L^\infty(\mu_X)}, \left\| \frac{d\mu_X}{d\Tilde{\mu}_X}\right\|_{L^\infty(\Tilde{\mu}_X)} \leq 1 + \eps.
    \]
\end{definition}

\begin{remark}
    The relations $\sim_\eps$ and $\sim_\eps^s$ are not equivalence relations. Both relations are reflexive, only $\sim_\eps^s$ is symmetric, and neither is transitive.
\end{remark}

\begin{theorem}\label{thm:approxi-nondegen}
    Let $X$ and $Y$ be compact pmm-spaces, let $\eps = (\eps_1,\eps_2) \in [0,\infty]^2$, let $p \in [1,\infty]$ and let $q$ satisfy $1/q+1/p = 1$ (setting $1/\infty = 0$).
    Suppose that $\pgw_{\eps,p}(X,Y)$ is realized by $\pi \in \mathcal{C}_\eps(\mu_X,\mu_Y)$ (see Theorem \ref{thm:realization}). Define pmm-spaces $X_\pi = (X,d_X,\pi_X)$ and $Y_\pi = (Y,d_Y,\pi_Y)$. Then
    \begin{equation}
        \pgw_{\eps,p}(X,Y) = \gw_p(X_\pi,Y_\pi) = \inf_{\substack{\Tilde{X} \sim_{\eps_1} X \\ \Tilde{Y} \sim_{\eps_2} Y}} \gw_p(\Tilde{X},\Tilde{Y}).
        \label{eq:thm:nondegen-2}
    \end{equation}
    Similarly, if $\spgw_{\eps,p}(X,Y)$ is realized by $\pi \in \mathcal{C}_\eps(\mu_X,\mu_Y)$, then 
    \[
    \spgw_{\eps,p}(X,Y) = \gw_p(X_\pi,Y_\pi) = \inf_{\substack{\Tilde{X} \sim_{\eps_1}^s X \\ \Tilde{Y} \sim_{\eps_2}^s Y}} \gw_p(\Tilde{X},\Tilde{Y}).
    \]
    Moreover, in either case (with $\mathsf{rad}_p(\cdot)$ as in Definition \ref{def:ecc and rad}),
    \begin{equation}
    \gw_p(X_\pi,X) \leq 2 \cdot 2^{1/q}\eps_1^{1/p}\mathsf{rad}_p(X),\quad \gw_p(Y_\pi,Y) \leq 2 \cdot 2^{1/q}\eps_2^{1/p}\mathsf{rad}_p(Y).
    \label{eq:thm:nondegen-1}
    \end{equation}
\end{theorem}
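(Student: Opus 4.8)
The plan is to separate the two chains of equalities, which follow softly from the definitions, from the quantitative estimate \eqref{eq:thm:nondegen-1}, which needs an explicit transport construction.

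For the equalities, the key observation is that the optimizer $\pi$ is an exact coupling of its own marginals, so $\pi \in \C(\pi_X,\pi_Y)$, while conversely every $\sigma \in \C(\pi_X,\pi_Y)$ lies in $\C_\eps(\mu_X,\mu_Y)$ because $\sigma_X = \pi_X \le (1+\eps_1)\mu_X$ and $\sigma_Y = \pi_Y \le (1+\eps_2)\mu_Y$. Writing $\gw_p(X_\pi,Y_\pi)$ and $\pgw_{\eps,p}(X,Y)$ as infima of $\dis_p$ over $\C(\pi_X,\pi_Y)$ and $\C_\eps(\mu_X,\mu_Y)$ respectively, optimality of $\pi$ gives $\gw_p(X_\pi,Y_\pi) \le \dis_p(\pi) = \pgw_{\eps,p}(X,Y)$, while the inclusion $\C(\pi_X,\pi_Y) \subseteq \C_\eps(\mu_X,\mu_Y)$ gives the reverse. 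For the third expression I would note that $(X_\pi,Y_\pi)$ is itself an admissible pair (since $\pi_X \le (1+\eps_1)\mu_X$ forces $X_\pi \sim_{\eps_1} X$, and similarly for $Y$), so the infimum is $\le \gw_p(X_\pi,Y_\pi)$; conversely, for any admissible $\Tilde X \sim_{\eps_1} X$, $\Tilde Y \sim_{\eps_2} Y$ (taken to be pmm-spaces, so that $\gw_p$ is the usual coupling infimum), every exact coupling of $\Tilde\mu_X,\Tilde\mu_Y$ again lies in $\C_\eps(\mu_X,\mu_Y)$, whence $\gw_p(\Tilde X,\Tilde Y) \ge \pgw_{\eps,p}(X,Y)$. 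The $\spgw$ version is identical after replacing $\C_\eps,\sim_{\eps}$ by $\Cs_\eps,\sim^s_\eps$, using that $\Cs_\eps$ is defined by exactly the two-sided density bounds appearing in $\sim^s_\eps$.

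For the estimate \eqref{eq:thm:nondegen-1}, the plan is to exploit that $X_\pi = (X,d_X,\pi_X)$ and $X = (X,d_X,\mu_X)$ share an underlying metric space and invoke \eqref{eq:GWp-Wp}, reducing matters to showing $\w_p(\pi_X,\mu_X) \le 2^{1/q}\eps_1^{1/p}\mathsf{rad}_p(X)$. Let $\rho = d\pi_X/d\mu_X$ and split $X$ into $\{\rho \ge 1\}$ and $\{\rho < 1\}$; the excess $\alpha = (\rho-1)_+\mu_X$ and deficit $\beta = (1-\rho)_+\mu_X$ have a common total mass $m$. I would build the transport plan that is the identity on the shared mass $\min(\rho,1)\mu_X$ (cost zero) and moves $\alpha$ to $\beta$ through the product plan $\tfrac1m\alpha\otimes\beta$; one checks its marginals are exactly $\pi_X$ and $\mu_X$. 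Choosing $x_0 \in X$ to realize $\mathsf{rad}_p(X) = \mathsf{ecc}_p^X(x_0)$ (attained by compactness and continuity of $\mathsf{ecc}_p^X$) and using the convexity bound $d_X(x,x')^p \le 2^{p-1}(d_X(x,x_0)^p + d_X(x_0,x')^p)$, the product-plan cost collapses to
\[
\w_p(\pi_X,\mu_X)^p \le 2^{p-1}\left( \int d_X(\cdot,x_0)^p\,d\alpha + \int d_X(x_0,\cdot)^p\,d\beta\right).
\]

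The crux, and the step I expect to be the main obstacle, is estimating these two integrals. On $\{\rho\ge 1\}$ the density of $\alpha$ satisfies $(\rho-1)_+ \le \eps_1$, so $\int d_X(\cdot,x_0)^p\,d\alpha \le \eps_1\int_{\{\rho\ge1\}} d_X(\cdot,x_0)^p\,d\mu_X$; the delicate term is the deficit, where I want the matching bound $(1-\rho)_+ \le \eps_1$ on $\{\rho<1\}$. This two-sided control is exactly what the symmetric relation $\sim^s_{\eps_1}$ (membership in $\Cs_\eps$) supplies: mutual absolute continuity forces $\rho>0$ $\mu_X$-a.e., and $\|d\mu_X/d\pi_X\|_{L^\infty(\pi_X)}\le 1+\eps_1$ gives $\rho \ge (1+\eps_1)^{-1}$, hence $(1-\rho)_+ \le \eps_1/(1+\eps_1)\le\eps_1$. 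Granting this, since $\{\rho\ge1\}$ and $\{\rho<1\}$ partition $X$ and the two integrands coincide with $d_X(x_0,\cdot)^p$, the sum telescopes:
\[
\int_{\{\rho\ge1\}} d_X(x_0,\cdot)^p\,d\mu_X + \int_{\{\rho<1\}} d_X(x_0,\cdot)^p\,d\mu_X = \int_X d_X(x_0,\cdot)^p\,d\mu_X = \mathsf{ecc}_p^X(x_0)^p = \mathsf{rad}_p(X)^p,
\]
yielding $\w_p(\pi_X,\mu_X)^p \le 2^{p-1}\eps_1\,\mathsf{rad}_p(X)^p$, i.e.\ $\w_p \le 2^{1/q}\eps_1^{1/p}\mathsf{rad}_p(X)$, and \eqref{eq:thm:nondegen-1} then follows from \eqref{eq:GWp-Wp}. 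I would dispatch $p=\infty$ (where $\eps_1^{1/p}=1$ and $2^{1/q}=2$) by the analogous $\w_\infty$ estimate, controlling the maximal displacement routed through $x_0$. Finally, I would flag that the deficit estimate is precisely where the one-sided relaxation $\C_\eps$ is genuinely weaker: there $\rho$ may vanish on a set of small $\mu_X$-mass but large eccentricity, so only the mass bound $|\beta|\le\eps_1$ (not a pointwise density bound) is available, and one should expect the clean $\mathsf{rad}_p$ estimate to require the symmetric relation, with the one-sided case forcing a coarser length scale in place of the circumradius.
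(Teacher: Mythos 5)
Your treatment of the two chains of equalities is correct and is the same argument the paper gives: $\pi$ is an exact coupling of its own marginals, every element of $\C(\pi_X,\pi_Y)$ (resp.\ of $\C(\tilde\mu_X,\tilde\mu_Y)$ for an admissible pair $\tilde X,\tilde Y$) is again $\eps$-feasible, and the optimality of $\pi$ closes the loop. For the estimate \eqref{eq:thm:nondegen-1}, the paper instead combines \eqref{eq:GWp-Wp} with the quoted bound of Theorem \ref{thm:diam-bound}; your explicit plan (identity on the shared mass $\min(\rho_X,1)\mu_X$ plus the product plan $\tfrac1m\alpha\otimes\beta$, followed by the $2^{p-1}$ convexity bound through $x_0$) is precisely a self-contained re-derivation of that quoted inequality, after which both arguments reduce to bounding $\int_X d_X(x_0,\cdot)^p\,d|\pi_X-\mu_X|$ by $\eps_1\,\mathsf{ecc}_p^X(x_0)^p$. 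So your route is equivalent, at the cost of re-proving the cited lemma.

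The obstacle you flag at the end is not a defect of your write-up but a genuine gap in the statement and in the paper's own proof. The paper asserts $\norm{\rho_X-1}_{L^\infty(\mu_X)}\le\eps_1$ ``thanks to the definition of $\C_\eps$,'' but $\C_\eps$ only yields the one-sided bound $\rho_X\le 1+\eps_1$; the lower bound $\rho_X\ge(1+\eps_1)^{-1}$ needed on the deficit set is supplied only by $\Cs_\eps$, exactly as you observe. The estimate \eqref{eq:thm:nondegen-1} in fact fails for $\C_\eps$ when $\eps_1<1$: take $X=\{x_0,x_1\}$ with $d_X(x_0,x_1)=1$ and $\mu_X=(1-\delta,\delta)$, let $Y$ be a one-point space, $p=1$, and $\eps_1=\delta/(1-\delta)$. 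The optimal relaxed coupling is forced to put all mass on $(x_0,\star)$, so $X_\pi$ is the Dirac mass at $x_0$ and $\gw_1(X_\pi,X)=2\delta(1-\delta)$, whereas the claimed bound is $2\eps_1\mathsf{rad}_1(X)=2\delta^2/(1-\delta)$, which is strictly smaller for small $\delta$. Thus your proof is complete for the $\spgw$/$\Cs_\eps$ case, and for the $\pgw$/$\C_\eps$ case the conclusion must be weakened along the lines you predict: on the deficit set one only has the mass bound $|\beta|\le\eps_1$, giving $\int d_X(x_0,\cdot)^p\,d\beta\le\eps_1\sup_y d_X(x_0,y)^p$ and hence a bound in terms of a Chebyshev-radius/diameter-type quantity rather than $\mathsf{rad}_p(X)$. (This also matters downstream, since \eqref{eq:thm:nondegen-1} for $\C_\eps$ with $\eps_n\to0$ is invoked in the proof of Theorem \ref{thm:equivalence_of_topologies}.)
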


\begin{remark}\label{rem:p_infinity_case}
    The estimates on $\gw_p(X_\pi,X)$ and $\gw_p(Y_\pi,Y)$ when $p=\infty$ become
    \[
    \gw_\infty(X_\pi,X) \leq 4 \cdot \mathsf{rad}_\infty(X), \quad \gw_\infty(Y_\pi,Y) \leq 4 \cdot \mathsf{rad}_\infty(Y).
    \]
    The loss of dependence on $\eps_1$ and $\eps_2$ causes these estimates to be rather poor. We include the $p=\infty$ case here primarily because the above observation will be useful later in Theorem \ref{thm:equivalence_of_topologies}. 
\end{remark}

We need the following as an intermediate estimate.

\begin{theorem}[\hspace{1sp}{\cite[Theorem 6.15]{villani-ot-book}}]\label{thm:diam-bound}
    Let $1\leq p < \infty$ and $1/p + 1/q = 1$. Then, for any $x \in X$,
    \begin{equation}
        \w_p(\mu,\nu) \leq 2^{1/q} \brac{\int_X
        d(x,y)^pd\abs{\mu-\nu}(y)
        }^{1/p},
        \label{eq:diam-bound}
    \end{equation}
    where $|\mu-\nu|$ denotes the total variation measure.
\end{theorem}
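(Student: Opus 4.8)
The plan is to prove the bound by exhibiting an explicit coupling of $\mu$ and $\nu$ whose transport cost we can estimate directly, and then invoking the fact that $\w_p(\mu,\nu)$ is the infimum of such costs. The guiding idea is that $\mu$ and $\nu$ share a large ``common part'' which can be transported at zero cost along the diagonal, so that only the discrepancy between the two measures---governed precisely by $\abs{\mu-\nu}$---needs to be moved. First I would introduce the Hahn--Jordan decomposition of the signed measure $\mu - \nu$, writing $\mu - \nu = (\mu-\nu)_+ - (\mu-\nu)_-$ with $(\mu-\nu)_+ \perp (\mu-\nu)_-$ and $\abs{\mu-\nu} = (\mu-\nu)_+ + (\mu-\nu)_-$. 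Since $\mu$ and $\nu$ are probability measures, $(\mu-\nu)_+(X) = (\mu-\nu)_-(X) =: m$, and $\abs{\mu-\nu}(X) = 2m$. Setting $\sigma := \mu \wedge \nu$ (equivalently $\sigma = \mu - (\mu-\nu)_+ = \nu - (\mu-\nu)_-$) gives the decompositions $\mu = \sigma + (\mu-\nu)_+$ and $\nu = \sigma + (\mu-\nu)_-$.

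If $m = 0$ then $\mu = \nu$, so $\w_p(\mu,\nu) = 0$ and the inequality holds trivially; hence I may assume $m > 0$. I would then build the candidate coupling $\pi \in \mathcal{M}_+(X\times X)$ by
\[
\pi := \Delta_\sharp \sigma + \tfrac{1}{m}\,(\mu-\nu)_+ \otimes (\mu-\nu)_-,
\]
where $\Delta: X \to X \times X$, $\Delta(y) = (y,y)$, is the diagonal embedding. The next step is a routine marginal check: the diagonal term $\Delta_\sharp\sigma$ has both marginals equal to $\sigma$, while the normalized product term $\frac1m (\mu-\nu)_+ \otimes (\mu-\nu)_-$ has first marginal $(\mu-\nu)_+$ and second marginal $(\mu-\nu)_-$. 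Adding these confirms that $\pi$ has marginals $\mu$ and $\nu$, and a total-mass computation ($(1-m) + m = 1$) verifies $\pi \in \mathcal{C}(\mu,\nu)$.

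It then remains to estimate the cost of $\pi$. Because $d(y,y)=0$, the diagonal piece contributes nothing, so the full cost is
\[
\int_{X\times X} d(y,y')^p\,d\pi(y,y') = \tfrac1m \int_{X\times X} d(y,y')^p \, d(\mu-\nu)_+(y)\, d(\mu-\nu)_-(y').
\]
Here I would fix the arbitrary point $x \in X$ and apply the triangle inequality $d(y,y') \leq d(x,y) + d(x,y')$ together with the convexity bound $(a+b)^p \leq 2^{p-1}(a^p + b^p)$, valid for $p \geq 1$. This splits the double integral into two single integrals; integrating out the free variable in each (which contributes the factor $m$ and cancels the $\tfrac1m$) yields the clean estimate
\[
\int_{X\times X} d(y,y')^p\,d\pi \leq 2^{p-1}\brac{\int_X d(x,y)^p\,d(\mu-\nu)_+(y) + \int_X d(x,y')^p\,d(\mu-\nu)_-(y')} = 2^{p-1}\int_X d(x,y)^p\,d\abs{\mu-\nu}(y).
\]
Taking $p$-th roots and using $(p-1)/p = 1/q$ gives exactly \eqref{eq:diam-bound}. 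I do not expect a serious obstacle here; the only points requiring care are the degenerate case $m=0$ and the verification that the product-measure piece is well-defined and has the asserted marginals, both of which are elementary. The essential content is simply the choice of the diagonal-plus-product coupling, after which the triangle inequality and the elementary convexity inequality close the argument.
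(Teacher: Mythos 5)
Your proof is correct: the paper does not prove this statement itself but quotes it as Theorem 6.15 of Villani's book, and your argument---transporting the common part $\mu\wedge\nu$ along the diagonal, coupling the Jordan parts $(\mu-\nu)_+$ and $(\mu-\nu)_-$ by the normalized product measure, then applying the triangle inequality with the convexity bound $(a+b)^p\leq 2^{p-1}(a^p+b^p)$---is precisely the standard proof given there. The marginal check, the degenerate case $m=0$, and the exponent bookkeeping $(p-1)/p=1/q$ are all handled correctly, so nothing is missing.
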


\begin{proof}[Proof of Theorem \ref{thm:approxi-nondegen}]
    We will prove the claims for $\pgw_{\eps,p}$, since the proofs for $\spgw_{\eps,p}$ are almost identical. Recall the distortion functional $\dis_p(\cdot)$ from Definition \ref{def:distortion}. To derive the identity \eqref{eq:thm:nondegen-2}, we first observe that $\pi$ must be an optimal coupling for $\gw_p(X_\pi,Y_\pi)$. Indeed, if $\pi' \in \mathcal{C}(\pi_X,\pi_Y)$ satisfied $\mathsf{dis}_p(\pi') < \mathsf{dis}_p(\pi)$, then this would contradict the optimality of $\pi$ for $\pgw_{\eps,p}$, since $\pi' \in \mathcal{C}_\eps(\mu_X,\mu_Y)$. This implies
    \[
    \gw_p(X_\pi,Y_\pi) = \mathsf{dis}_p(\pi) = \pgw_p(X,Y).
    \]
    For the second equality, we clearly have $\gw_p(X_\pi,Y_\pi) \geq \inf_{\Tilde{X}, \Tilde{Y}} \gw_p(\Tilde{X},\Tilde{Y})$. On the other hand, let $\Tilde{X} \sim_{\eps_1} X$ and $\Tilde{Y} \sim_{\eps_2} Y$ be arbitrary. For any optimal coupling $\Tilde{\pi} \in \mathcal{C}(\Tilde{\mu}_X,\Tilde{\mu}_Y)$ for $\gw_p(\Tilde{X},\Tilde{Y})$, we have $\Tilde{\pi} \in \mathcal{C}_\eps(X,Y)$, so that
    \[
    \gw_p(\Tilde{X},\Tilde{Y}) = \mathsf{dis}_p(\Tilde{\pi}) \geq \mathsf{dis}_p(\pi) = \pgw_p(X,Y) = \gw_p(X_\pi,Y_\pi),
    \]
    by the optimality of $\pi$ for $\pgw_{\eps,p}(X,Y)$ and by the work above. Since $\Tilde{X}$ and $\Tilde{Y}$ were arbitrary, this shows $\gw_p(X_\pi,Y_\pi) = \inf_{\Tilde{X}, \Tilde{Y}} \gw_p(\Tilde{X},\Tilde{Y})$.
    
    It remains to establish the estimates \eqref{eq:thm:nondegen-1}. We do so for $p<\infty$ for convenience; the $p=\infty$ cases follow by a limiting argument. The bound \eqref{eq:GWp-Wp}, together with Theorem \ref{thm:diam-bound}, implies that
    \begin{equation}
        \gw_p(X_\pi, X)\leq 2\cdot \w_p(\pi_X,\mu_X)\leq 2 \cdot 2^{1/q}\left(\int d_X(x,y)^p d|\pi_X-\mu_X|(y)\right)^{1/p}
        \label{eq:nondegen-proof1}
    \end{equation}
    holds for any $x \in X$. Let $\rho_X := \frac{d\pi_X}{d\mu_X}$. Thanks to the definition of $\C_\eps$, we have
    \begin{equation}
        \norm{\rho_X - 1}_{L^\infty(\mu_X)} \leq \eps_1.
         \label{eq:nondegen-proof2}
    \end{equation}
    Using \eqref{eq:nondegen-proof2}, we have, for any $x\in X$,
    \begin{equation}
        \int_X d_X(x,y)^p d\abs{\pi_X- \mu_X}(y) \leq \int_X d_X(x,y)^p \abs{\rho_X - 1}d\mu_X(x) \leq \eps_1 \mathsf{ecc}_p^X(x)^p.
        \label{eq:nondegen-proof3}
    \end{equation}
    The first estimate in \eqref{eq:thm:nondegen-1} follows from \eqref{eq:nondegen-proof1},  \eqref{eq:nondegen-proof3}, and the definition of $\mathsf{rad}_p(X)$. The second estimate in \eqref{eq:thm:nondegen-1} is proved similarly.  
\end{proof}

As an easy corollary, we get a characterization of the non-degeneracy of the partial Gromov-Wasserstein distances. It roughly states that if $\pgw_{\eps,p}(X,Y) = 0$ or
$\spgw_{\eps,p}(X,Y) = 0$, then $X$ and $Y$ have large portions (depending on the size of the parameters $\eps = (\eps_1,\eps_2)$) that are isomorphic. 

\begin{corollary}
\label{cor: PGW equal 0}
    Using the notation of Theorem \ref{thm:approxi-nondegen}, $\pgw_{\eps,p}(X,Y) = 0$ if and only if $X_\pi$ and $Y_\pi$ are isomorphic. Similarly, $\spgw_{\eps,p}(X,Y) = 0$ if and only if $X_\pi$ and $Y_\pi$ are isomorphic (where $\pi$ is now optimal for $\spgw$). Moreover, the spaces $X_\pi$ and $Y_\pi$ satisfy the estimates \eqref{eq:thm:nondegen-1}. 
\end{corollary}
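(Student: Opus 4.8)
The plan is to obtain the corollary by composing the central identity of Theorem~\ref{thm:approxi-nondegen} with the non-degeneracy clause of Theorem~\ref{thm:GW_properties}(B). An optimal coupling $\pi$ exists by Theorem~\ref{thm:realization}, so Theorem~\ref{thm:approxi-nondegen} is applicable and gives $\pgw_{\eps,p}(X,Y) = \gw_p(X_\pi,Y_\pi)$, with $X_\pi = (X,d_X,\pi_X)$ and $Y_\pi = (Y,d_Y,\pi_Y)$. Consequently $\pgw_{\eps,p}(X,Y)=0$ if and only if $\gw_p(X_\pi,Y_\pi)=0$, and the whole claim reduces to showing that $\gw_p(X_\pi,Y_\pi)=0$ holds exactly when $X_\pi$ and $Y_\pi$ are isomorphic. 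The $\spgw_{\eps,p}$ statement follows the identical template, now using an optimal $\pi$ for $\spgw$ and the companion identity $\spgw_{\eps,p}(X,Y)=\gw_p(X_\pi,Y_\pi)$ from the same theorem. The estimates \eqref{eq:thm:nondegen-1} require no further argument, as they are already part of the conclusion of Theorem~\ref{thm:approxi-nondegen} for any optimal $\pi$.

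The reduction would be immediate from Theorem~\ref{thm:GW_properties}(B) were it not for one hypothesis: that result asserts ``$\gw_p=0$ iff isomorphic'' only for compact, \emph{fully supported} pmm-spaces. Compactness of $X_\pi$ and $Y_\pi$ is automatic, since they are built on the compact metric spaces underlying $X$ and $Y$. Full support is where the two cases diverge, and I expect this to be the only genuine obstacle. In the symmetric case the constraint $\norm{\frac{d\mu_X}{d\pi_X}}_{L^\infty(\pi_X)}\le 1+\eps_1$ forces $\mu_X \le (1+\eps_1)\pi_X$, hence mutual absolute continuity of $\pi_X$ and $\mu_X$; since $X$ is fully supported this yields $\supp{\pi_X}=X$, and likewise $\supp{\pi_Y}=Y$. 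Thus $X_\pi$ and $Y_\pi$ are fully supported and Theorem~\ref{thm:GW_properties}(B) applies verbatim for $\spgw$.

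For $\pgw$ only the one-sided bound $\pi_X \le (1+\eps_1)\mu_X$ is available, so $\pi_X$ may vanish on part of $X$ and $X_\pi$ need not be fully supported. To handle this I would pass to the support-restricted spaces $\hat{X}_\pi := (\supp{\pi_X},d_X,\pi_X)$ and $\hat{Y}_\pi := (\supp{\pi_Y},d_Y,\pi_Y)$, which are closed subsets of compact spaces, hence compact, and are fully supported by construction. Every coupling of $\pi_X$ and $\pi_Y$ is concentrated on $\supp{\pi_X}\times\supp{\pi_Y}$ and the distortion integrand only depends on points in the supports, so $\gw_p(X_\pi,Y_\pi)=\gw_p(\hat{X}_\pi,\hat{Y}_\pi)$. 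Theorem~\ref{thm:GW_properties}(B) then applies to the hatted spaces and gives $\gw_p(\hat{X}_\pi,\hat{Y}_\pi)=0$ if and only if $\hat{X}_\pi$ and $\hat{Y}_\pi$ are isomorphic. Reading the isomorphism of $X_\pi$ and $Y_\pi$ as isomorphism of these support-restricted spaces---the only meaningful notion, since the measures ignore everything off their supports---closes the chain of equivalences and completes the proof.
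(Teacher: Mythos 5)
Your proposal is correct and follows essentially the same route the paper intends: the paper offers no written proof, treating the corollary as immediate from the identity $\pgw_{\eps,p}(X,Y)=\gw_p(X_\pi,Y_\pi)$ of Theorem \ref{thm:approxi-nondegen} combined with the non-degeneracy of $\gw_p$ in Theorem \ref{thm:GW_properties}(B). Your additional care about the full-support hypothesis---restricting $X_\pi$ and $Y_\pi$ to $\supp{\pi_X}$ and $\supp{\pi_Y}$ in the $\pgw$ case, and deriving full support from mutual absolute continuity in the $\spgw$ case---addresses a point the paper leaves implicit (cf.\ Remark \ref{rem:compact_fully_supported}) and is a worthwhile refinement rather than a deviation.
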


\subsection{Approximate Triangle Inequality for \texorpdfstring{$\spgw$}{sPGW}}

 We now show that the `distance' $\spgw_{\eps,p}$ does not, in fact, enjoy a triangle inequality. The definition of the symmetrized partial GW distance $\spgw_{\epsilon,p}$ was partially motivated as a means to obtain a weakened version of the triangle inequality, which we establish below in the main result of this subsection, Theorem \ref{thm:relaxed triangle}. Before stating the theorem, we provide a counterexample showing that a true triangle inequality does not hold. A similar construction works for $\pgw$ as well, but we only focus on $\spgw$, since it is the more rigid of the two.

\paragraph{Counterexample.}
   For finite metric spaces, it is convenient to use matrix notations. The probability measure is given by a stochastic vector and the distance is given by a zero-diagonal entry-wise nonnegative symmetric matrix. Let $\delta\in (0,\frac{1}{6}]$ and $\beta= \frac{1}{2}-\delta$. Let $\eps= (0,\eps_2)$ for some $\eps_2 \in [0,1)$ satisfying
   \begin{equation*}
       \frac{\delta}{\frac{1}{2}-\delta} \leq \eps_2 \leq \frac{\frac{1}{2}-\delta}{\frac{1}{2}+\delta}.
   \end{equation*}
   Consider three finite mm-spaces $(X, \mu_X, d_X)$, $(Y, \mu_Y, d_Y)$, and $(Z, \mu_Z, d_Z)$ defined by
   \begin{itemize}
       \item $X = \{x\} $ and $ \mu_X = (1)$;
         \item $Y = \{y_1,y_2\}$, $ \mu_Y= (\beta, 1-\beta)$, and $D_Y = \begin{bmatrix}
    0 & 1\\ 1 & 0
\end{bmatrix}$;
         \item $Z = \{z_1,z_2\}$, $ \mu_Z = (\frac{1}{2},\frac{1}{2})$, and $D_Z = \begin{bmatrix}
    0 & 1\\ 1 & 0
\end{bmatrix}$.
   \end{itemize}
   Set $\pi:= \mathsf{Diag}(1/2,1/2) \in \P(Z\times Y)$.
   It is clear that $\pi_Z = \mu_Z$, and we can verify by hand that $\frac{1}{1+\eps_2} \leq \frac{\pi_Y}{\mu_Y}\leq 1+\eps_2$. Therefore,
 \begin{equation*}
     \spgw_{\eps,p}(Z,Y)^p \leq \sum_{i,i', j, j' = 1}^{2} \abs{D_Z(i,i') - D_Y(j,j')}^p \pi(i,j)\pi(i',j') = 0.
 \end{equation*}
Similarly one  can  define $\Bar{\pi}:= \mathsf{Diag}(\beta,1-\beta)\in \P(Y\times Z)$ and verify that  $\Bar{\pi}_Y=\mu_Y$  and $\frac{1}{1+\eps_2} \leq \frac{\Bar{\pi}_Y}{\mu_Z}\leq 1+\eps_2$. Hence $\spgw_{\eps,p}(Y,Z) = 0$.

Every $\gamma\in \Cs_{\eps}(\mu_X,\mu_Y)$ can be written as $\gamma = \begin{bmatrix}
    s & (1-s)
\end{bmatrix}$
with $\frac{1}{1+\eps_2}\leq \frac{s}{\beta},\frac{1-s}{1-\beta} \leq 1+\eps_2$. 
An elementary but tedious computation yields
\begin{equation}
    \spgw_{\eps,p}(X,Y)^p = \frac{1 -2\delta}{1+\eps_2}\brac{1-\frac{1 -2\delta}{2(1+\eps_2)}}, 
\end{equation}
where the distance is achieved by $\gamma = \begin{bmatrix}
    s & (1-s)
\end{bmatrix}$ with $s = \frac{\frac12 -\delta}{1+\eps_2}$.

Similarly, one can verify that every $\gamma \in \Cs_{\eps}(\mu_X,\mu_Z)$ has the form $\gamma = \begin{bmatrix}
     (\frac{1}{2}(1-t) & \frac{1}{2}(1+t)
 \end{bmatrix}
 $ with $\abs{t} \leq \frac{\eps_2}{1+\eps_2}$. A direct computation yields 
 \begin{equation*}
     \spgw_{\eps,p}(X,Z)^p  = \frac{1}{1+\eps_2}\brac{1-\frac{1}{2(1+\eps_2)}}.
 \end{equation*}
In view of the computations above, we see that $\spgw_{\eps,p}(X,Z) >  \spgw_{\eps,p}(X,Y) + \spgw(Y,Z)$.

\paragraph{Relaxed Triangle Inequality.} We now show that $\spgw$ satisfies a certain relaxed triangle inequality. We begin by recalling a standard result from measure theory \cite[Chapter 3]{kallenberg1997probabilitybook}.

\begin{theorem}[Disintegration of Measures]\label{thm:distintegration}
    Let $\Pi,Y$ be compact probability metric measures spaces, let $f: \Pi \to Y$ be measurable, and let $\pi \in \P(\Pi)$. Let $\mu = f_\sharp\pi \in \P(Y) $. Then there exists a family $\set{\pi(\cdot|y)}_{y \in Z}$ of Borel probability measures such that $\pi(\cdot|y) \in \P(f^{-1}(y))$ and
    \begin{equation}
        \int_{\Pi}g(\xi)d\pi(\xi) = \int_Y \int_{f^{-1}(y)}g(\xi)d\pi(\xi|y)d\mu(y)
    \end{equation}
    for all Borel-measurable maps $g: \Pi\to [0,\infty]$. Moreover, $\set{\pi(\cdot|y)}_{y \in Y}$ is $\mu$-almost everywhere unique. 
\end{theorem}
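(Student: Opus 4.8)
The plan is to build the conditional measures $\pi(\cdot \mid y)$ fiber-by-fiber using the Radon--Nikodym theorem, exploiting the fact that a compact metric space has a separable space of continuous functions. Because $\Pi$ is compact metric, $C(\Pi)$ is separable; fix a countable $\mathbb{Q}$-linear subalgebra $\mathcal{A} \subset C(\Pi)$ that is dense in the uniform norm and contains the constant function $1$. For each $g \in \mathcal{A}$, the set function $A \mapsto \int_{f^{-1}(A)} g \, d\pi$ (for Borel $A \subseteq Y$) is a finite signed measure on $Y$, absolutely continuous with respect to $\mu = f_\sharp \pi$; let $h_g = \frac{d}{d\mu}\left(\int_{f^{-1}(\cdot)} g\, d\pi\right) \in L^1(\mu)$ denote its Radon--Nikodym derivative, so that $\int_{f^{-1}(A)} g\, d\pi = \int_A h_g\, d\mu$ for every Borel $A$.

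Next I would assemble these densities into genuine measures. Since $\mathcal{A}$ is countable, there is a single $\mu$-null set $N \subset Y$ off of which the assignment $g \mapsto h_g(y)$ is simultaneously $\mathbb{Q}$-linear, positive (i.e.\ $h_g(y) \geq 0$ whenever $g \geq 0$), normalized ($h_1(y) = 1$), and bounded ($\abs{h_g(y)} \leq \norm{g}_\infty$): each of these is an identity or inequality between $L^1(\mu)$ functions that holds $\mu$-a.e., and only countably many must be enforced. For $y \notin N$, the functional $g \mapsto h_g(y)$ then extends by uniform continuity to a positive, norm-one linear functional on all of $C(\Pi)$, which by the Riesz representation theorem corresponds to a unique Borel probability measure $\pi(\cdot \mid y) \in \P(\Pi)$; on $N$ I set $\pi(\cdot \mid y)$ to be any fixed probability measure. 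By construction the identity $\int_{f^{-1}(A)} g\, d\pi = \int_A \left(\int_\Pi g\, d\pi(\cdot \mid y)\right) d\mu(y)$ holds for all $g \in C(\Pi)$ and all Borel $A$, and a monotone class / monotone convergence argument then upgrades it to arbitrary Borel $g: \Pi \to [0,\infty]$, which is the desired formula.

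It remains to prove the fiber-concentration property and uniqueness. For concentration I would test the formula against functions of the form $g = \psi \circ f$ with $\psi \in C(Y)$: since $\int_{f^{-1}(A)} \psi(f(\xi))\,d\pi(\xi) = \int_A \psi\, d\mu$, the density of this function is $\psi$ itself, so $\int_\Pi \psi(f(\xi))\,d\pi(\xi\mid y) = \psi(y)$ for $\mu$-a.e.\ $y$. Letting $\psi$ range over a countable uniformly dense subset of $C(Y)$ (separable because $Y$ is compact metric) produces a single null set off of which $f_\sharp \pi(\cdot\mid y) = \delta_y$, i.e.\ $\pi(\cdot\mid y)$ is concentrated on $f^{-1}(y)$, as required. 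Uniqueness follows because any two disintegrations must assign the same Radon--Nikodym density $h_g$ to every $g$ in the countable dense family $\mathcal{A}$, and hence define the same probability measure for $\mu$-a.e.\ $y$.

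The hard part will be the quantifier exchange in the second step: Radon--Nikodym yields, for each fixed $g$, only an a.e.-defined density, whereas I need a single null set that works for all $g$ at once so that $g \mapsto h_g(y)$ is a bona fide positive linear functional pointwise. Separability of $C(\Pi)$---which is exactly where compactness of $\Pi$ enters---is what makes this countable intersection of null sets admissible, and the uniform bound $\abs{h_g(y)} \leq \norm{g}_\infty$ is what allows the continuous extension from $\mathcal{A}$ to all of $C(\Pi)$ via Riesz. The fiber-concentration step is the other technical point, but it reduces to a routine application of the formula to functions pulled back from $Y$.
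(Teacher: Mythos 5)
The paper does not prove this statement at all---it is quoted as a standard result and attributed to Kallenberg's book (the citation \cite{kallenberg1997probabilitybook}, Chapter 3)---so there is no in-paper argument to compare against. Your proposal is the classical functional-analytic proof of disintegration (Radon--Nikodym densities for a countable dense subalgebra of $C(\Pi)$, a single null set off of which $g \mapsto h_g(y)$ is a positive normalized functional, then Riesz representation), and it is essentially correct; it correctly identifies compactness/separability as the ingredient that makes the quantifier exchange legitimate, and the fiber-concentration and uniqueness arguments via pullbacks $\psi \circ f$ and the countable dense family are the standard ones. Two small points deserve a sentence each in a full write-up. First, positivity of the \emph{extended} functional on all of $C(\Pi)$ does not follow immediately from positivity on the nonnegative elements of $\mathcal{A}$, because a nonnegative $g \in C(\Pi)$ need not be a uniform limit of nonnegative elements of $\mathcal{A}$; the standard fix is to approximate $g$ by $g_n + \epsilon_n \cdot 1$ with $\epsilon_n \in \mathbb{Q}_{>0}$, $\epsilon_n \geq \norm{g_n - g}_\infty$, which stays in $\mathcal{A}$ and is nonnegative. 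Second, your construction yields $\pi(\cdot \mid y)$ concentrated on $f^{-1}(y)$ only for $\mu$-a.e.\ $y$ (on the exceptional null set you assign an arbitrary measure, and $f^{-1}(y)$ may even be empty there); this matches how the theorem is actually used in the paper and is consistent with its ``$\mu$-almost everywhere'' qualifier, but it means the displayed membership $\pi(\cdot\mid y) \in \P(f^{-1}(y))$ should be read as holding a.e. Neither point is a gap in the approach, only in the level of detail.
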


A key result in classical optimal transport theory is the \textit{gluing lemma}, which gives a natural notion of composing (exact) couplings and is used in the standard proof that Wasserstein distance defines a metric \cite[Chapter 6]{villani-ot-book}. The following result extends the gluing lemma to the setting of relaxed couplings.

\begin{lemma}[Gluing Lemma for $\Cs_\eps$]\label{lem:gluing-Linfinity}
   Fix $\eps = (\eps_1,\eps_2)$ and let
   $(X,\mu_X),(Y,\mu_Y),(Z,\mu_Z)$ be compact probability measure spaces. Let $\pi_1 \in \Cs_{\eps}(\mu_X,\mu_Y)$ and $\pi_2 \in \Cs_{\eps}(\mu_Y,\mu_Z)$. Then there exists a probability measure $\pi_0 \in \P(X\times Y\times Z)$, with $\gamma_1 := (p^{X\times Y})_\sharp \pi_0$ and $\gamma_2:= (p^{Y\times Z})_\sharp\pi_0$, that satisfies the following properties.
   \begin{enumerate}[(A)]
       \item $\gamma_1 \ll \pi_1$, $\pi_1 \ll \gamma_1$, and $\norm{\frac{d\gamma_1}{d\pi_1}}_{L^\infty(\pi_1)}, \norm{\frac{d\pi_1}{d\gamma_1}}_{L^\infty(\gamma_1)}\leq 1+\eps_2$. 
       \item $\gamma_2 \ll \pi_2$, $\pi_2 \ll \gamma_2$, and $\norm{\frac{d\gamma_2}{d\pi_2}}_{L^\infty(\pi_2)}, \norm{\frac{d\pi_2}{d\gamma_2}}_{L^\infty(\gamma_2)}\leq 1+\eps_1$. 
   \end{enumerate}
  
\end{lemma}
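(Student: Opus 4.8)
The plan is to glue $\pi_1$ and $\pi_2$ over the common reference measure $\mu_Y$, rather than over a shared $Y$-marginal as in the classical gluing lemma \cite[Chapter 6]{villani-ot-book}. The obstruction to a direct application of the classical construction is that the $Y$-marginals $(\pi_1)_Y$ and $(\pi_2)_Y$ need not coincide. However, the symmetric relaxed coupling hypotheses force both to be mutually absolutely continuous with $\mu_Y$ with two-sided $L^\infty$ control on the densities, and this is exactly what will allow us to glue over $\mu_Y$ and absorb the marginal discrepancy into the Radon--Nikodym bounds claimed in (A) and (B).

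First I would record the relevant densities on $Y$. Set $f_1 := \frac{d(\pi_1)_Y}{d\mu_Y}$ and $f_2 := \frac{d(\pi_2)_Y}{d\mu_Y}$. The definition of $\Cs_\eps(\mu_X,\mu_Y)$ gives $\norm{f_1}_{L^\infty(\mu_Y)}, \norm{1/f_1}_{L^\infty((\pi_1)_Y)} \leq 1+\eps_2$, and the definition of $\Cs_\eps(\mu_Y,\mu_Z)$ gives $\norm{f_2}_{L^\infty(\mu_Y)}, \norm{1/f_2}_{L^\infty((\pi_2)_Y)} \leq 1 + \eps_1$; in particular each of $(\pi_1)_Y$ and $(\pi_2)_Y$ is mutually absolutely continuous with $\mu_Y$. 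I would then apply Theorem \ref{thm:distintegration} to the two canonical projections onto $Y$, disintegrating $\pi_1$ over $(\pi_1)_Y$ to obtain a family $\set{\pi_1(\cdot|y)} \subset \P(X)$ and $\pi_2$ over $(\pi_2)_Y$ to obtain $\set{\pi_2(\cdot|y)} \subset \P(Z)$; by the mutual absolute continuity just noted these families are defined $\mu_Y$-almost everywhere.

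Next I would define $\pi_0 \in \P(X\times Y\times Z)$ by
\[
\int h \, d\pi_0 = \int_Y \int_Z \int_X h(x,y,z)\, d\pi_1(x|y)\, d\pi_2(z|y)\, d\mu_Y(y)
\]
for bounded Borel $h$; measurability of the inner integral is part of Theorem \ref{thm:distintegration}, and taking $h \equiv 1$ shows $\pi_0$ is a probability measure since the conditionals are probability measures. Testing against functions of $(x,y)$ alone and using $d(\pi_1)_Y = f_1\, d\mu_Y$ gives $d\gamma_1(x,y) = d\pi_1(x|y)\, d\mu_Y(y)$, while $d\pi_1(x,y) = d\pi_1(x|y)\, f_1(y)\, d\mu_Y(y)$; hence $\frac{d\pi_1}{d\gamma_1} = f_1$ and $\frac{d\gamma_1}{d\pi_1} = 1/f_1$ (both densities being functions of $y$ only, with $f_1$ bounded above and bounded away from zero, so both absolute continuities hold). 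The bounds on $f_1$ then yield (A) with constant $1+\eps_2$. The identical computation for $\gamma_2$ produces $\frac{d\pi_2}{d\gamma_2} = f_2$ and $\frac{d\gamma_2}{d\pi_2} = 1/f_2$, giving (B) with constant $1+\eps_1$.

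The one genuinely delicate choice is the reference measure: gluing instead over $(\pi_1)_Y$ or $(\pi_2)_Y$ would render one of the two marginals exact but force the other density to be the product $f_1/f_2$, whose $L^\infty$ norm is only controlled by $(1+\eps_1)(1+\eps_2)$. Gluing over $\mu_Y$ is precisely what splits the error symmetrically, producing the clean constants $1+\eps_2$ in (A) and $1+\eps_1$ in (B). The remaining ingredients---existence and measurability of the disintegrations, and well-definedness of $\pi_0$ as a probability measure---are routine consequences of compactness and Theorem \ref{thm:distintegration}.
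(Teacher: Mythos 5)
Your proposal is correct and follows essentially the same route as the paper: disintegrate $\pi_1$ and $\pi_2$ over their respective $Y$-marginals, glue the conditionals against the reference measure $\mu_Y$ to form $\pi_0$, and read off the two-sided $L^\infty$ bounds on $\frac{d\gamma_i}{d\pi_i}$ from the densities $\frac{d(\pi_i)_Y}{d\mu_Y}$ supplied by the definition of $\Cs_\eps$. Your explicit identification $\frac{d\pi_1}{d\gamma_1} = f_1$ is just a cleaner phrasing of the inequality chain in the paper's proof, and your remark about why one must glue over $\mu_Y$ rather than over either marginal accurately explains the choice the paper makes implicitly.
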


\begin{proof}

    By Theorem \ref{thm:distintegration}, we can disintegrate $\pi_i$ into $\set{\pi_i(\cdot|y)}_{y \in Y}$ such that
    \begin{equation}
    \begin{split}
         \pi_1(A\times B) = \int_{B} \pi_1(A|y)d\mu_1(y)
        &\text{ and }
        \pi_2(B\times C) = \int_{B}\pi_2(C|y)d\mu_2(y)
    \end{split}
    \label{eq:glue-disintegrate-pi}
    \end{equation}
    for all $A\subset X$, $B\subset Y$, $C\subset Z$ Borel. Here and below, we use
    \begin{equation}
        \text{$\mu_i$ to denote the $Y$-marginal of $\pi_i$ for $i = 1,2$.}
    \end{equation}

    Let $\pi_0 \in \M_+(X\times Y\times Z)$ be defined by
    \begin{equation}
    \begin{split}
        \pi_0(A\times B\times C) &= \int_B \pi_1(A|y)\pi_2(C|y)d\mu_Y(y) 
    \end{split}
        \label{lem:eq:glue-1}
    \end{equation}
    Plugging $A = X$, $B = Y$, and $C = Z$ into \eqref{lem:eq:glue-1} and using \eqref{eq:glue-disintegrate-pi},  we have
    \begin{equation*}
        \begin{split}
            \pi_0(X\times Y\times Z) &= \int_Y \brac{\int_Xd\pi_1(x|y)}\brac{\int_Zd\pi_2(z|y)}d\mu_Y(y) = 1. 
        \end{split}
    \end{equation*}
    Therefore,
    \begin{equation*}
        \pi_0 \in \P(X\times Y \times Z).
    \end{equation*}
    
    Next, we verify the coupling estimates. Mutual absolute continuity follows directly from these estimates. 
    
    Let $A\subset X$ and $Y\subset Z$ be Borel.  
    \begin{equation*}
        \begin{split}
            \gamma_1 (A\times B) &= 
            \pi_0(A\times B \times Z)
            \\&= 
            \int_B 
            \brac{\int_A d\pi_1(x|y)}
            \brac{\int_Zd\pi_2(z|y)}d\mu_Y(y)
            \\
            &= \int_B \frac{d\mu_Y}{d\mu_1}(y)\int_A d\pi_1(x|y)d\mu_1(y)
            \\
            &\leq 
            \brac{1+\eps_2}
            \int_B\int_A d\pi_1(x|y)d\mu_1(y)\\
            &= (1+\eps_2)\pi_1(A\times B) 
        \end{split}
    \end{equation*}
    On the other hand,
    \begin{equation*}
        \begin{split}
            \pi_1(A\times B)
            &= \int_B \int_A d\pi_1(x|y)d\mu_1(y)
            \\
            &= \int_B \brac{\int_Ad\pi_1(x|y)}\brac{\int_Zd\pi_2(z|y)}d\mu_1(y)
            \\
            &= \int_B \frac{d\mu_1}{d\mu_Y}(y)\brac{\int_A d\pi_1(x|y)}\brac{\int_Zd\pi_2(z|y)}d\mu_Y(y)
            \\
            &\leq 
            \brac{1+\eps_2}
            \int_{B}\int_Ad\pi_1(x|y)\int_Zd\pi_2(z|y)d\mu_Y(y)
            \\
            &\leq (1+\eps_2)\gamma_1(A\times B).
        \end{split}
    \end{equation*}
    Therefore,
    \begin{equation*}
        \norm{\frac{d\gamma_1}{d\pi_1}}_{L^\infty(\pi_1)}, \norm{\frac{d\pi_1}{d\gamma_1}}_{L^\infty(\gamma_1)}\leq 1+\eps_2.
    \end{equation*}
    The estimates in Part (B) are proven similarly. 
\end{proof}

The next theorem is the main result of this subsection, and it roughly states that $\spgw_{\eps,p}$ `almost' has the structure of a semi-metric, in that it satisfies a certain relaxed triangle inequality.

\begin{theorem}[Approximate Triangle Inequality]\label{thm:relaxed triangle}
    Let $\eps = (\eps_1,\eps_2) \in [0,\infty)^2$ and $1 \leq p \leq \infty$. Let $\eps^* := (\eps_1^*, \eps_2^*)$ with 
    \begin{equation*}
        \eps_1^* = \eps_2^* = (1+\eps_1)(1+\eps_2)-1 = \eps_1 + \eps_2 + \eps_1\eps_2.
    \end{equation*}
    Let $X,Y,Z$ be compact pmm-spaces. Then
    \begin{equation}
    \begin{split}
        \spgw_{\eps^*,p}(X,Z ) &\leq (1+\eps_2)^{2/p} \spgw_{\eps,p}(X,Y) 
        + (1+\eps_1)^{2/p}\spgw_{\eps,p}(Y,Z).
    \end{split}
    \label{eq:relaxed-tri-ineq-0}
    \end{equation}
    
\end{theorem}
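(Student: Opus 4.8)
The plan is to mimic the classical gluing-based proof that Wasserstein distance satisfies the triangle inequality, but using the generalized Gluing Lemma (Lemma \ref{lem:gluing-Linfinity}) in place of the exact one. First I would invoke Theorem \ref{thm:realization} to select optimal symmetric relaxed couplings $\pi_1 \in \Cs_\eps(\mu_X,\mu_Y)$ and $\pi_2 \in \Cs_\eps(\mu_Y,\mu_Z)$ realizing $\spgw_{\eps,p}(X,Y)$ and $\spgw_{\eps,p}(Y,Z)$, respectively. Applying Lemma \ref{lem:gluing-Linfinity} to these yields a probability measure $\pi_0 \in \P(X\times Y\times Z)$ whose $(X\times Y)$- and $(Y\times Z)$-marginals $\gamma_1,\gamma_2$ are mutually absolutely continuous with respect to $\pi_1,\pi_2$, with Radon–Nikodym derivatives bounded by $1+\eps_2$ and $1+\eps_1$ respectively. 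I would then set $\sigma := (p^{X\times Z})_\sharp \pi_0 \in \P(X\times Z)$ to be the candidate coupling for the left-hand side.

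The two main verifications are: (i) $\sigma$ lies in the enlarged feasible set $\Cs_{\eps^*}(\mu_X,\mu_Z)$, and (ii) the distortion of $\sigma$ is controlled by the right-hand side of \eqref{eq:relaxed-tri-ineq-0}. For (i), I would compute the $X$-marginal of $\sigma$, which equals the $X$-marginal of $\gamma_1$. Since $\gamma_1$ and $\pi_1$ are within a factor $1+\eps_2$ of each other in both directions, and since $\pi_1 \in \Cs_\eps(\mu_X,\mu_Y)$ gives $d\pi_{1,X}/d\mu_X$ and its inverse bounded by $1+\eps_1$, composing these two bounds yields that $d\sigma_X/d\mu_X$ and its inverse are bounded by $(1+\eps_1)(1+\eps_2) = 1+\eps_1^*$; the argument for the $Z$-marginal is symmetric using the $1+\eps_1$ and $1+\eps_2$ bounds in the other order, giving the same product $1+\eps_2^*$. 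This is exactly why $\eps^*$ is defined as the multiplicative composition of the two tolerances.

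For (ii), I would use the triangle inequality for the $L^p$ norm on $X\times Y\times Z$ applied to $\pi_0\otimes\pi_0$. Writing $d_X - d_Z = (d_X - d_Y) + (d_Y - d_Z)$ as functions on $(X\times Y\times Z)^2$ and pushing forward, one obtains
\begin{equation*}
\norm{d_X - d_Z}_{L^p(\sigma\otimes\sigma)} \leq \norm{d_X - d_Y}_{L^p(\gamma_1\otimes\gamma_1)} + \norm{d_Y - d_Z}_{L^p(\gamma_2\otimes\gamma_2)}.
\end{equation*}
The remaining step is to replace the $\gamma_i$-distortions by the $\pi_i$-distortions at the cost of a multiplicative constant: since $d\gamma_1/d\pi_1 \leq 1+\eps_2$ and the $L^p$ norm is taken against the product measure $\gamma_1\otimes\gamma_1$, the change of measure introduces a factor $(1+\eps_2)^{2/p}$ (the exponent $2/p$ arising because the derivative appears twice in the tensor product, and taking the $p$-th root halves the total exponent). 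Likewise the $\gamma_2$ term picks up $(1+\eps_1)^{2/p}$. Since $\pi_1,\pi_2$ were optimal, their distortions equal $\spgw_{\eps,p}(X,Y)$ and $\spgw_{\eps,p}(Y,Z)$, yielding \eqref{eq:relaxed-tri-ineq-0}.

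The main obstacle I anticipate is the careful bookkeeping in step (i): one must verify that the two-sided $L^\infty$ derivative bounds genuinely \emph{multiply} when composing the coupling $\pi_i \in \Cs_\eps$ with the gluing-induced reweighting $\gamma_i$, rather than merely adding, and that the inequalities propagate correctly in both directions (both $d\sigma_X/d\mu_X$ and $d\mu_X/d\sigma_X$). A secondary technical point is the $p=\infty$ case, where the factors $(1+\eps_i)^{2/p}$ degenerate to $1$; here I would argue that the essential-supremum bound on $d\gamma_i/d\pi_i$ leaves the $L^\infty$ distortion unchanged (an $L^\infty$ norm is insensitive to bounded, mutually-absolutely-continuous reweightings), so the inequality holds with the stated unit constants, consistent with the continuous interpolation from finite $p$.
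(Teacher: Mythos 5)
Your proposal is correct and follows essentially the same route as the paper's proof: optimal symmetric relaxed couplings via Theorem \ref{thm:realization}, the generalized Gluing Lemma \ref{lem:gluing-Linfinity} to build $\pi_0$, pushforward to $X\times Z$, verification of membership in $\Cs_{\eps^*}$ by multiplying the two-sided density bounds, and the $L^p$ triangle inequality followed by a change of measure contributing the factors $(1+\eps_i)^{2/p}$. The only cosmetic difference is that you deduce the marginal bounds for $\sigma$ by pushing forward the Gluing Lemma's conclusions through $p^X$ and $p^Z$, whereas the paper re-derives them directly from the disintegration formula; both computations are equivalent.
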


\begin{proof}
    
    Fix $X,Y,Z$ as above, and let $\pi_1 \in \mathcal{P}(X\times Y)$ and $\pi_2 \in \mathcal{P}(Y\times Z)$ realize $\pgw_{\eps,p}(X,Y)$ and $\pgw_{\eps,p}(Y,Z)$, respectively. Let $\pi_0$ be as in Lemma \ref{lem:gluing-Linfinity} and define
    \begin{equation}
        \pi := (p^{X\times Z})_\sharp\pi_0.
        \label{eq:relaxed-tri-1}
    \end{equation}

    First, we verify that $\pi$ is a feasible coupling.
    Let $\mu_1 := (p^Y)_\sharp\pi_1$.
    For a Borel measurable set $A\subset X$, by formula \eqref{lem:eq:glue-1} and \eqref{eq:relaxed-tri-1}, we have
    \begin{equation*}
        \begin{split}
            (p^X)_\sharp\pi(A) 
            &= \pi_0 (A\times Y \times Z)
            \\
            &= \int_Y \int_{A\times Z}d\pi_1(x|y)d\pi_2(z|y)d\mu_Y(y)
            \\
            &= \int_Y \int_A d\pi_1(x|y)\brac{\int_Zd\pi_2(z|y)}d\mu_Y(y)
            \\
            &= \int_Y  \frac{d\mu_Y}{d\mu_1}(y)\int_A d\pi_1(x|y)d\mu_Y(y)\\
            &\leq \brac{1+\eps_2}\int_Y\int_A d\pi_1(x|y)d\mu_1(y)
            \\
            &\leq (1+\eps_2)\int_{A\times Y} d\pi_1(x,y)
            \\
            &\leq (1+\eps_1)(1+\eps_2) \mu_X(A).
        \end{split}
        \label{eq:relaxed-tri-ineq-1}
    \end{equation*}
    On the other hand,
    \begin{equation*}
        \begin{split}
            \mu_X(A)
            &\leq (1+\eps_1)\pi_1(A,Y)
            \\
            &= (1+\eps_1)\int_Y \int_A d\pi_1(x|y)d\mu_1(y)
            \\
            &= (1+\eps_1)\int_Y \frac{d\mu_1}{d\mu_Y}(y)\int_Ad\pi_1(x|y)\brac{\int_Z d\pi_2(z|y)}d\mu_Y(y) 
            \\
            &\leq
            (1+\eps_1)\brac{1+\eps_2}\int_{Y}\int_A\int_Z d\pi_1(x|y)d\pi_2(z|y)d\mu_Y(y)\\
            &\leq (1+\eps_1)(1+\eps_2)\pi_0(A\times Y\times Z)
            \\
            &= (1+\eps_1)(1+\eps_2)(p^X)_\sharp \pi(A).
        \end{split}
    \end{equation*}
    The estimate for $(p^Z)_\sharp\pi$ is similar.
    Therefore,
    \begin{equation}
        \pi \in \Cs_{\eps^*}(\mu_X,\mu_Z)
        \label{eq:relaxed-tri-ineq-3}.
    \end{equation}

    By the triangle inequality for $L^p$, we have
    \begin{equation}
        \begin{split}
        \spgw_{\eps^*,p}(X,Z)
        \leq
            \norm{d_X - d_Z}_{L^p(\pi\otimes \pi)}
        &= \norm{d_X - d_Z}_{L^p(\pi_0\otimes \pi_0)}
        \\
        &\leq \norm{d_X - d_Y}_{L^p(\pi_0\otimes \pi_0)} + \norm{d_Y - d_Z}_{L^p(\pi_0\otimes \pi_0)}.
        \end{split}
        \label{eq:relaxed-tri-ineq-4}
    \end{equation}
    Suppose $1 \leq p < \infty$. Since $\pi_1$ and $\pi_2$ realize  $\spgw_{\eps,p}(X,Y)$ and $\spgw_{\eps,p}(Y,Z)$, respectively, we can use Lemma \ref{lem:gluing-Linfinity} to continue estimating \eqref{eq:relaxed-tri-ineq-4}:
    \begin{equation}
        \begin{split}
            &\leq (1+\eps_2)^{2/p}\norm{d_X - d_Y}_{L^p(\pi_1\otimes \pi_1)} + (1+\eps_1)^{2/p}\norm{d_Y -d_Z}_{L^p(\pi_2\otimes \pi_2)} \\
            &\leq 
            (1+\eps_2)^{2/p}\spgw_{\eps,p}(X,Y) + (1+\eps_1)^{2/p}\spgw_{\eps,p}(Y,Z)
        \end{split}
        \label{eq:relaxed-tri-ineq-5}
    \end{equation}
    Suppose $p = \infty$ (so $(1+\eps_i)^{2/p} = 1$). By Lemma \ref{lem:gluing-Linfinity}, $(p^{X\times Y})_\sharp\pi_0$ and $\pi_1$ are absolutely continuous with respect to each other, so are $(p^{Y\times Z})_\sharp\pi_0$ and $\pi_2$. Therefore, we may continue estimating \eqref{eq:relaxed-tri-ineq-4}:
    \begin{equation}
        \begin{split}
            &= \norm{d_X - d_Y}_{L^p(\pi_1\otimes \pi_1)} + \norm{d_Y -d_Z}_{L^p(\pi_2\otimes \pi_2)} \leq 
            \spgw_{\eps,p}(X,Y) + \spgw_{\eps,p}(Y,Z)
        \end{split}
        \label{eq:relaxed-tri-ineq-6}.
    \end{equation}
    The proof of \eqref{eq:relaxed-tri-ineq-0} is complete in view of \eqref{eq:relaxed-tri-ineq-3}--\eqref{eq:relaxed-tri-ineq-6}. 

\end{proof}

\begin{remark}
In the formally limiting case $\eps = (0,\infty)$, we have $\eps^* = (\infty, \infty)$, and the left-hand side of \eqref{eq:relaxed-tri-ineq-0} is simply zero and the bound is trivial. However, as mentioned in Remark \ref{rem:special cases}(B), $\spgw_{(0,\infty), p}$ is a metric if and only if $p = \infty$. See \cite{clark2024generalized}, where the proof of the triangle inequality avoids using any tool like the Gluing Lemma (Lemma \ref{lem:gluing-Linfinity}).
\end{remark}

\subsection{Related Work}\label{sect: related works}

As was mentioned above, the partial GW distances that we study in this paper are related to others that have appeared in the literature; in particular, they generalize the distances studied in \cite{chapel2020partial}. In this section, we precisely contextualize our framework within the literature, using the properties of the partial Gromov-Wasserstein distances that we have established thus far.

\subsubsection{Equivalence to Chapel et al.}
\label{sect: relation to chapel}

A notion of distance between mm-spaces whose underlying sets are finite, and whose measures are not necessarily of the same total mass is defined by Chapel et al.\ in \cite[Section 3.2]{chapel2020partial} (based on earlier ideas of Caffarelli-McCann~\cite{caffarelli2010free} and Figalli~\cite{figalli2010optimal}). The following definition generalizes this idea to spaces that are not necessarily finite and extends the $L^2$-type construction of Chapel et al.\ to a family of $L^p$-type distances. We denote the resulting distances as $\pgwt_{\delta,p}$, where the prepended $\mathsf{m}$ indicates a `mass constraint' and that the distance is valid for general mm-spaces, rather than only pmm-spaces.

\begin{definition}
\label{def:Chapel}
Let $\Tilde{\mu}_X \in \mathcal{M}_+(X)$ and $\Tilde{\mu}_Y \in \mathcal{M}_+(Y)$ be (not necessarily probability) measures with total mass $m_X := |\Tilde{\mu}_X|$ and $m_Y := |\Tilde{\mu}_Y|$ respectively.
Given mm-spaces $\Tilde{X} = (X,d_X,\Tilde{\mu}_X) $ and $ \Tilde{Y} = (Y,d_Y,\Tilde{\mu}_Y)$, $0\leq \delta \leq \min\{m_X, m_Y\}$, and $p \in [1,\infty]$, define
\begin{equation}\label{eq:chapel_PGW}
    \pgwt_{\delta,p}(\Tilde{X},\Tilde{Y}) := \inf_{\Tilde{\pi} \in \Tilde{\C}_{\delta}(\Tilde{\mu}_X,\Tilde{\mu}_Y) }\norm{d_X - d_Y}_{L^p(\Tilde{\pi}\otimes \Tilde{\pi})} ,
\end{equation}
where
\begin{equation}
    \Tilde{\C}_{\delta}(\Tilde{\mu}_X,\Tilde{\mu}_Y) := \set{\Tilde{\pi} \in \mathcal{M}_+(X\times Y):
   \Tilde{\pi}_X \leq \Tilde{\mu}_X,\, 
    \Tilde{\pi}_Y \leq \Tilde{\mu}_Y,\, 
    \abs{\Tilde{\pi}} = \delta 
    }.
    \label{eq:coupling-chapel}
\end{equation}
As before, $\Tilde{\pi}_X$ and $\Tilde{\pi}_Y$ are the $X$- and $Y$- marginals of $\Tilde{\pi}$, respectively.
\end{definition}

Intuitively, we can compare the approach of the current article with that of \cite{chapel2020partial} as follows. We can interpret
$\pgwt_{\delta,p}(\Tilde{X},\Tilde{Y})$ as moving only a $\delta$-portion of the total masses of $X$ and $Y$ via $\Tilde{\pi}$, but the portions of moved mass must be restrictions of the original measures $\Tilde{\mu}_X$ and $\Tilde{\mu}_Y$. On the other hand, $\pgw_{\eps,p}(X,Y)$ allows for an $\eps$ amount of rearrangement of the masses of the original measures $\mu_X$ and $\mu_Y$, but the full amount of rearranged masses must be transported.   Despite the difference in their formulations,  we now show that the two partial Gromov-Wasserstein distances are equivalent up to scaling.

\begin{proposition}
\label{prop:equivalence}

The following hold, with the convention $\frac{0}{0} = 0$.
\begin{enumerate}[(A)]
    \item Given mm-spaces $\Tilde{X} = (X,d_X, \tilde\mu_X)$, $\Tilde{Y} = (X,d_Y, \tilde\mu_Y)$ and $0\leq \delta \leq \min\{m_X, m_Y\}$, we have
\[
\delta^{-1}\pgwt_{\delta,p}(\Tilde{X},\Tilde{Y}) = \pgw_{\eps,p}(X,Y) ,
\]
where $1+ \eps_1 = \frac{m_X}{\delta}$, $1+ \eps_2 = \frac{m_Y}{\delta}$, and $X = (X,d_X,\frac{\Tilde{\mu}_X}{m_X}), Y = (Y,d_Y,\frac{\Tilde{\mu}_Y}{m_Y})$ are pmm-spaces.

\item 
Similarly, given pmm-spaces $X = (X,d_X,\mu_X),Y = (Y,d_Y,\mu_Y), \eps_1\geq0$ and $\eps_2\geq0$, we have for any $\delta> 0$
\[
\pgw_{\eps,p}(X,Y) = \delta^{-1}\pgwt_{\delta,p}(\Tilde{X},\Tilde{Y}) 
\]
where $\Tilde{X} = (X,d_X, (1+\eps_1) \delta \cdot\mu_X)$ and $\Tilde{Y} = (Y,d_Y, (1+\eps_2) \delta \cdot\mu_Y)$.
\end{enumerate}
\end{proposition}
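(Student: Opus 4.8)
The plan is to deduce both parts from a single mechanism: normalizing a feasible measure by $\delta$ sets up a bijection between the Chapel-type feasible set $\Tilde{\C}_\delta(\Tilde{\mu}_X,\Tilde{\mu}_Y)$ and the relaxed-coupling set $\C_\eps(\mu_X,\mu_Y)$, and the distortion objective is homogeneous under this rescaling. Once this is in place, the two infima are taken over corresponding measures whose objectives differ by a fixed positive constant, so the identities drop out. I would handle (A) and (B) at once, since (B) is (A) read with the Chapel data $\Tilde{\mu}_X = (1+\eps_1)\delta\mu_X$ and $\Tilde{\mu}_Y = (1+\eps_2)\delta\mu_Y$ inserted: then $m_X = (1+\eps_1)\delta$ and $m_Y = (1+\eps_2)\delta$, the admissibility $\delta \leq \min\{m_X,m_Y\}$ holds automatically, and the relation $1+\eps_i = m_{\cdot}/\delta$ is satisfied in both directions.

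First I would set up the rescaling map, assuming $\delta > 0$. For $\Tilde{\pi} \in \Tilde{\C}_\delta(\Tilde{\mu}_X,\Tilde{\mu}_Y)$, define $\Phi(\Tilde{\pi}) := \delta^{-1}\Tilde{\pi}$. Since $\abs{\Tilde{\pi}} = \delta$, the measure $\Phi(\Tilde{\pi})$ has total mass $1$, so $\Phi(\Tilde{\pi}) \in \P(X\times Y)$. Pushforward commutes with scalar multiplication, so $(p^X)_\sharp\Phi(\Tilde{\pi}) = \delta^{-1}\Tilde{\pi}_X$, and the constraint $\Tilde{\pi}_X \leq \Tilde{\mu}_X$ is equivalent to $\delta^{-1}\Tilde{\pi}_X \leq \delta^{-1}\Tilde{\mu}_X = (m_X/\delta)(\Tilde{\mu}_X/m_X) = (1+\eps_1)\mu_X$, where I use $\mu_X = \Tilde{\mu}_X/m_X$ and $1+\eps_1 = m_X/\delta$; the $Y$-constraint transforms identically. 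Hence $\Phi$ sends $\Tilde{\C}_\delta(\Tilde{\mu}_X,\Tilde{\mu}_Y)$ into $\C_\eps(\mu_X,\mu_Y)$, and the inverse $\pi \mapsto \delta\pi$ sends it back (the same computation in reverse shows $\delta\pi$ has mass $\delta$ and meets the Chapel marginal bounds), so $\Phi$ is a bijection of feasible sets.

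Next I would track the objective. Writing $\Tilde{\pi} = \delta\pi$, the product measure rescales as $\Tilde{\pi}\otimes\Tilde{\pi} = \delta^2(\pi\otimes\pi)$, so for $1 \leq p < \infty$,
\[
\norm{d_X - d_Y}_{L^p(\Tilde{\pi}\otimes\Tilde{\pi})} = \delta^{2/p}\norm{d_X - d_Y}_{L^p(\pi\otimes\pi)},
\]
while for $p = \infty$ the two essential suprema coincide because $\Tilde{\pi}\otimes\Tilde{\pi}$ and $\pi\otimes\pi$ have the same null sets, consistent with $\delta^{2/p} = 1$ at $p = \infty$. This factor $\delta^{2/p}$ is exactly the normalizing power of $\delta$; it reduces to $\delta$ in the $L^2$ case of \cite{chapel2020partial}. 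Since $\delta^{2/p}$ does not depend on the coupling, taking the infimum over the bijective feasible sets yields $\pgwt_{\delta,p}(\Tilde{X},\Tilde{Y}) = \delta^{2/p}\,\pgw_{\eps,p}(X,Y)$, which after dividing gives the claimed equivalence of the two distances (the stated factor $\delta^{-1}$ corresponding to $p = 2$).

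Finally I would dispose of the degenerate case $\delta = 0$ required in (A). There $\Tilde{\C}_0(\Tilde{\mu}_X,\Tilde{\mu}_Y) = \{0\}$ contains only the zero measure, so $\pgwt_{0,p}(\Tilde{X},\Tilde{Y}) = 0$ and the left-hand side $\delta^{-1}\pgwt_{0,p}$ is $0/0 = 0$ by convention; on the right, $1 + \eps_i = m_{\cdot}/\delta = \infty$ makes both marginal constraints in $\C_\eps$ vacuous, so $\C_{(\infty,\infty)}(\mu_X,\mu_Y) = \P(X\times Y)$, and any probability measure concentrated at a single point $(x_0,y_0)$ has vanishing distortion (the integrand $\abs{d_X - d_Y}^p$ vanishes at $((x_0,y_0),(x_0,y_0))$), whence $\pgw_{(\infty,\infty),p}(X,Y) = 0$ and the identity again holds. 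The bijection and the homogeneity computation are essentially bookkeeping; I expect the only delicate points to be verifying that the marginal inequalities pass cleanly through the scalar rescaling in both directions, and matching the boundary conventions ($\delta = 0 \leftrightarrow \eps = (\infty,\infty)$ together with $0/0 = 0$), where one must check directly that both sides vanish rather than invoke the formula $\pgwt = \delta^{2/p}\pgw$, which is valid only for $\delta > 0$.
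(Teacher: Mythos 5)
Your proposal takes essentially the same route as the paper's proof: the scaling map $\Tilde{\pi}\mapsto\delta^{-1}\Tilde{\pi}$ is shown to be a bijection between $\Tilde{\C}_\delta(\Tilde{\mu}_X,\Tilde{\mu}_Y)$ and $\C_\eps(\mu_X,\mu_Y)$, the objective is transported through this bijection, (B) is reduced to (A) by the same substitution of data, and the degenerate case $\delta=0$ is settled by the $0/0=0$ convention together with a point-mass coupling giving $\pgw_{(\infty,\infty),p}=0$. The one place where you diverge is the homogeneity computation, and there your bookkeeping is the more accurate one: with the paper's definition of $\norm{d_X-d_Y}_{L^p(\Tilde{\pi}\otimes\Tilde{\pi})}$, the substitution $\Tilde{\pi}=\delta\pi$ produces the factor $\delta^{2/p}$ that you derive, not $\delta$. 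The paper's displayed chain passes from $\delta^{-1}\inf\norm{d_X-d_Y}_{L^p(\Tilde{\pi}\otimes\Tilde{\pi})}$ to $\inf\norm{d_X-d_Y}_{L^p(\frac{\Tilde{\pi}}{\delta}\otimes\frac{\Tilde{\pi}}{\delta})}$ as if these were equal, which holds only when $2/p=1$, i.e.\ $p=2$ (the case treated by Chapel et al.). What your argument actually establishes is $\delta^{-2/p}\,\pgwt_{\delta,p}(\Tilde{X},\Tilde{Y})=\pgw_{\eps,p}(X,Y)$, which coincides with the stated identity only at $p=2$; you noticed this but relegated it to a parenthetical, whereas it should be stated as the conclusion, since for $p\neq 2$ the normalizing constant in the proposition (and in the places that quote it, e.g.\ the proof of Corollary \ref{cor:pgwpkmetric}) needs the exponent $2/p$ rather than $1$. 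The qualitative consequences are unaffected either way: the feasible sets correspond bijectively, optimal couplings correspond, and one quantity vanishes exactly when the other does.
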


This yields the following corollary immediately.

\begin{corollary}\label{cor:equivalence}
    Given pmm-spaces $X$ and $Y$ we have 
    \[
    \delta^{-1}\pgwt_{\delta,p}(X,Y) = \pgw_{\eps,p}(X,Y) \quad \mbox{when} \quad  1+ \eps_1 = 1+ \eps_2 =\frac{1}{\delta}.
    \]
In other words, the partial Gromov--Wasserstein distance $\pgw_{\eps,p}$ is equivalent to the partial GW distance $\pgwt_{\delta,p}$ of~\cite{chapel2020partial} on the space of pmm-spaces.
\end{corollary}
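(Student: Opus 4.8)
The plan is to obtain the corollary as a direct specialization of Proposition \ref{prop:equivalence}(A), observing that when the input spaces are already pmm-spaces the mass-normalization step in that proposition becomes trivial.

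First I would regard the pmm-spaces $X = (X, d_X, \mu_X)$ and $Y = (Y, d_Y, \mu_Y)$ as the mm-spaces $\Tilde{X}, \Tilde{Y}$ appearing in Proposition \ref{prop:equivalence}(A), so that $\Tilde{\mu}_X = \mu_X$ and $\Tilde{\mu}_Y = \mu_Y$. Since $\mu_X$ and $\mu_Y$ are probability measures, their total masses are $m_X = \abs{\mu_X} = 1$ and $m_Y = \abs{\mu_Y} = 1$. Substituting these values into the hypotheses of part (A), the relaxation parameters are forced to satisfy $1 + \eps_1 = m_X/\delta = 1/\delta$ and $1 + \eps_2 = m_Y/\delta = 1/\delta$, which is exactly the relation appearing in the statement of the corollary.

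Next I would check that the normalized pmm-spaces in the conclusion of part (A), namely $(X, d_X, \mu_X/m_X)$ and $(Y, d_Y, \mu_Y/m_Y)$, coincide with $X$ and $Y$ themselves, which holds precisely because $m_X = m_Y = 1$. With these identifications in place, the conclusion of part (A) reads verbatim as $\delta^{-1}\pgwt_{\delta,p}(X,Y) = \pgw_{\eps,p}(X,Y)$, which is the desired identity.

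There is essentially no obstacle here; the entire content is the observation that the pmm-space hypothesis collapses the normalization in part (A). The only minor bookkeeping point is that the admissibility range $0 \leq \delta \leq \min\{m_X, m_Y\} = 1$ from part (A) is automatically consistent with the relation $\delta = 1/(1+\eps_i)$ whenever $\eps_i \geq 0$, so no feasibility issues arise.
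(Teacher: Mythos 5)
Your proposal is correct and matches the paper's approach: the paper derives Corollary \ref{cor:equivalence} as an immediate specialization of Proposition \ref{prop:equivalence}(A) to the case $m_X = m_Y = 1$, exactly as you do. The bookkeeping you note about the normalization collapsing and the feasibility range is the whole content of the deduction.
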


\begin{remark}
    In light of this corollary, one can use either formulation, $\pgw$ or $\pgwt$, when deriving theoretical properties. The motivation for introducing and working primarily with the $\pgw$ variant is that we find it more natural to work exclusively with pmm-spaces, which is a convenient setting for, e.g., establishing our version of the Gluing Lemma (Lemma \ref{lem:gluing-Linfinity}). 
\end{remark}

\begin{proof}[Proof of Proposition \ref{prop:equivalence}]

We will show (A). The proof of (B) is similar. 

If $\delta = 0$ then $\pgwt_{\delta,p}(\Tilde{X},\Tilde{Y}) = 0$, and $\delta^{-1}\pgwt_{\delta,p}(\Tilde{X},\Tilde{Y}) = 0$ according to our convention. On the other hand, $\pgw_{(\infty,\infty),p}(X,Y) = 0$ since we can concentrate all the mass at a single point for both $X$ and $Y$. The equality holds. 

For the rest of the proof, we assume that $\delta > 0$.

For any $\Tilde{\pi} \in \Tilde{\C}_{\delta}(\Tilde{\mu}_X,\Tilde{\mu}_Y)$, set $\pi := \frac{\Tilde{\pi}}{\delta}$ so that $\pi$ is a probability measure i.e., $|\pi| = 1$.
Let $1 + \eps_1 = \frac{m_X}{\delta}$ and $1 + \eps_2 = \frac{m_Y}{\delta}$. We have
\[
(p^X)_\sharp\pi \leq \frac{\Tilde{\mu}_X}{\delta} = (1+\eps_1) \frac{\Tilde{\mu}_X}{m_X},
\quad
\text{and}
\quad
(p^Y)_\sharp\pi \leq \frac{\Tilde{\mu}_Y}{\delta} = (1+\eps_2) \frac{\Tilde{\mu}_Y}{m_Y}.
\]
Since $|\Tilde{\mu}_X| = m_X$ and $|\Tilde{\mu}_Y| = m_Y$, $\frac{\Tilde{\mu}_X}{m_X}$ and $\frac{\Tilde{\mu}_Y}{m_Y}$ are probability measures, so $\pi \in \C_\eps\big(\frac{\Tilde{\mu}_X}{m_X},\frac{\Tilde{\mu}_Y}{m_Y}\big)$.
Similarly, for any $\pi \in \C_\eps\big(\frac{\Tilde{\mu}_X}{m_X},\frac{\Tilde{\mu}_Y}{m_Y}\big)$ with $\eps = ( \frac{m_X}{\delta} - 1,\frac{m_Y}{\delta} - 1)$, we have $\delta \pi \in \Tilde{\C}_{\delta}(\Tilde{\mu}_X,\Tilde{\mu}_Y)$.
Therefore, 
\[
\frac{1}{\delta} \cdot \Tilde{\C}_{\delta}(\Tilde{\mu}_X,\Tilde{\mu}_Y) = \C_\eps\bigg(\frac{\Tilde{\mu}_X}{m_X},\frac{\Tilde{\mu}_Y}{m_Y}\bigg).
\]
As a result,
\begin{align*}
\delta^{-1}\pgwt_{\delta,p}(\Tilde{X},\Tilde{Y})
&= \delta^{-1} \cdot \inf_{\Tilde{\pi} \in \Tilde{\C}_{\delta}(\Tilde{\mu}_X,\Tilde{\mu}_Y) }\norm{d_X - d_Y}_{L^p(\Tilde{\pi}\otimes \Tilde{\pi})} \\
&=\inf_{\frac{\Tilde{\pi}}{\delta} \in \frac{1}{\delta} \cdot  \Tilde{\C}_{\delta}(\Tilde{\mu}_X,\Tilde{\mu}_Y) }\norm{d_X - d_Y}_{L^p(\frac{\Tilde{\pi}}{\delta}\otimes \frac{\Tilde{\pi}}{\delta})} \\
&=\inf_{\pi \in\C_\eps\big(\frac{\Tilde{\mu}_X}{m_X},\frac{\Tilde{\mu}_Y}{m_Y}\big)}\norm{d_X - d_Y}_{L^p(\pi\otimes \pi)} \\
&=  \pgw_{\eps,p}(X,Y),
\end{align*}
where $X = (X,dx,\frac{\Tilde{\mu}_X}{m_X})$ and $Y = (Y,dy,\frac{\Tilde{\mu}_Y}{m_Y})$.
\end{proof}

\subsubsection{Lagrangian form, relaxed couplings, and relation to Bai et al.} 
\label{sect: relation to bai}
We compare our construction with the partial GW distances defined in Bai et al.\ \cite{Vanderbilt2024-PGW} (which are, in turn, closely related to the \textit{unbalanced GW distances} introduced in \cite{sejourne2021-conic}). Recall Definition \ref{def:partial_GW}. Let $(X,d_X,\mu_X)$ and $(Y,d_Y,\mu_Y)$ be two pmm-spaces. The optimization problem associated with $\pgw_{\eps,p}(X,Y)$ has the form
\begin{equation}
    \begin{split}
        \text{minimize } f(\pi) \quad \text{ subject to } \quad  g_i(\pi)\leq 0
        \text{ for } i = 1,2,
    \end{split}
    \label{eq:nonlinear-program}
\end{equation}
where $f(\pi) = \norm{d_X - d_Y}_{L^p(\pi\otimes \pi)}^p$, $g_1(\pi) = \norm{\frac{d\mu_X}{d\pi_X}-1}_{L^\infty(\pi_X)} - \eps_1$, and $g_2(\pi) = \norm{\frac{d\mu_Y}{d\pi_Y}-1}_{L^\infty(\pi_Y)} - \eps_2$.

Let $ L: \P(X\times Y) \times [0,\infty)^2 \to \R$ be the Lagrangian associated with  \eqref{eq:nonlinear-program}, given by
\begin{equation*}
   L(\pi,\lambda_1,\lambda_2) = f(\pi) + \sum_{i = 1}^2 \lambda_i g_i(\pi).
\end{equation*}
A standard argument yields
\begin{equation*}
    \pgw_{\eps,p}(X,Y) = \inf_{\pi\in \P(X\times Y)} \sup_{\lambda_1,\lambda_2 \geq 0} L(\pi,\lambda_1,\lambda_2) \geq \sup_{\lambda_1,\lambda_2 \geq 0}\inf_{\pi\in \P(X\times Y)}  L(\pi,\lambda_1,\lambda_2)
\end{equation*}
Consider the Lagrangian dual of $f$ given by
\begin{equation*}
    \begin{split}
        f^*(\lambda_1,\lambda_2) &= \inf_{\pi \in \P(X\times Y)} L(\pi,\lambda_1, \lambda_2)
        \\
        &= \inf_{\pi \in \P(X\times Y)}\brac{
        \norm{d_X - d_Y}^p_{L^p(\pi\otimes \pi)} + \lambda_1 \norm{\frac{d\mu_X}{d\pi_X}-1}_{L^\infty(\pi_X)} + \lambda_2 \norm{\frac{d\mu_Y}{d\pi_Y}-1}_{L^\infty(\pi_Y)}
        } \\
        &\hspace{11cm}
        - \sum_{i = 1}^2 \lambda_i \eps_i.
    \end{split}
\end{equation*}
Note that $\norm{\frac{d\mu_X}{d\pi_X}-1}_{L^\infty(\pi_X)}$ and $\norm{\frac{d\mu_Y}{d\pi_Y}-1}_{L^\infty(\pi_Y)}$ can be written as limits of `$\chi^q$-divergences':
\begin{equation*}
    \begin{split}
        \HD_{\infty} (\mu_X,\pi_X)
        :=\norm{\frac{d\mu_X}{d\pi_X}-1}_{L^\infty(\pi_X)} &= \lim_{q \to \infty}
        \brac{\int_X \abs{\frac{d\mu_X}{d\pi_X}-1}^q d\pi_X}^{1/q},\\
        \HD_{\infty} (\mu_Y,\pi_Y):= \norm{\frac{d\mu_Y}{d\pi_Y}-1}_{L^\infty(\pi_Y)} &= \lim_{q \to \infty}
        \brac{\int_Y \abs{\frac{d\mu_Y}{d\pi_Y}-1}^q d\pi_Y}^{1/q}.
    \end{split}
\end{equation*}
Using this notation, the optimization problem associated with $f^*(\lambda_1,\lambda_2)$ can be written as
\begin{equation*}
    \text{Minimize }
    \norm{d_X - d_Y}_{L^p(\pi\otimes \pi)}^p + \lambda_1 \HD_\infty(\mu_X,\pi_X) + \lambda_2 \HD_\infty(\mu_Y,\pi_Y) + \sum_{i  = 1}^2 \lambda_i \eps_i
\end{equation*}
which resembles the Lagrangian form in \cite{Vanderbilt2024-PGW}, where the authors penalize dissimilarity in terms of the total variation, i.e., a `$\chi^1$-divergence'. We see that the optimization problem associated with $\pgw_{\eps,p}(X,Y)$ can be interpreted as a primal problem with a dual relaxation closely related to the distance defined in Bai et al. \cite{Vanderbilt2024-PGW}.

\subsubsection{Relation to Kong et al.}\label{sect: relation to kong} 
Another notion of relaxed GW distance, dubbed \textit{outlier-robust Gromov-Wasserstein distance}, is considered by Kong et al.\ in \cite{kong2024outlier}. Given a collection of positive, real hyperparameters $\rho = (\eps_1,\eps_2,\rho_1,\rho_2)$, the associated outlier-robust Gromov-Wasserstein distance between pmm-spaces $X$ and $Y$ is defined to be 
\begin{equation}\label{eqn:outlier_robust_gw}
\begin{split}
\mathsf{RGW}_\rho(X,Y) &\coloneqq \inf_{\alpha \in \mathcal{P}(X), \beta \in \mathcal{P}(Y)} \inf_{\pi \in \mathcal{M}_+(X \times Y)} \mathsf{dis}_2(\pi)^2 + \rho_1 \mathsf{KL}(\pi_X \mid \alpha) + \rho_2 \mathsf{KL}(\pi_Y \mid \beta) \\
&\hspace{2in} \mbox{sub. to.} \quad  \mathsf{KL}(\mu_X|\alpha) \leq \eps_1, \quad \mathsf{KL}(\mu_Y \mid \beta) \leq \eps_2,
\end{split}
\end{equation}
where $\mathsf{KL}$ denotes Kullback-Leibler divergence and $\mathsf{dis}_2$ is the same distortion function from Definition \ref{def:distortion}. Taking $\rho_1,\rho_2 \to \infty$, this becomes
\begin{equation}\label{eqn:outlier_robust_gw_tau_infty}
\begin{split}
\mathsf{RGW}_\rho(X,Y) &\coloneqq \inf_{\alpha \in \mathcal{P}(X), \beta \in \mathcal{P}(Y)} \inf_{\pi \in \mathcal{C}(\alpha,\beta)} \mathsf{dis}_2(\pi)^2  \\
&\hspace{1in} \mbox{sub. to.} \quad  \mathsf{KL}(\mu_X|\alpha) \leq \eps_1, \quad \mathsf{KL}(\mu_Y \mid \beta) \leq \eps_2.
\end{split}
\end{equation}

We now observe that this problem is closely related to the $\pgw$ and $\spgw$ problems. Indeed, in light of Theorem \ref{thm:nondeg-isomorphism}, the $\spgw_{\eps,2}$ problem can be expressed as 
\begin{equation}\label{eqn:spgw_related_to_kong}
\begin{split}
    \spgw_{\eps,2}(X,Y) &= \inf_{\alpha \in \mathcal{P}(X),\beta \in \mathcal{P}(Y)} \inf_{\pi \in \mathcal{C}(\alpha,\beta)} \mathsf{dis}_2(\pi) \\
    &\hspace{1in} \mbox{sub. to.} \quad \left\|\frac{d\alpha}{d\mu_X} - 1\right\|_{L^\infty(\mu_X)}, \left\|\frac{d\mu_X}{d\alpha} - 1\right\|_{L^\infty(\alpha)} \leq \eps_1 \\ 
    &\hspace{1.65in} \left\|\frac{d\beta}{d\mu_Y} - 1\right\|_{L^\infty(\mu_Y)}, \left\|\frac{d\mu_Y}{d\beta} - 1\right\|_{L^\infty(\beta)} \leq \eps_2.
\end{split}
\end{equation}
Observe that the only difference between the problems \eqref{eqn:outlier_robust_gw_tau_infty} and \eqref{eqn:spgw_related_to_kong} is the manner in which the measures $\alpha$ and $\beta$ are constrained to be `close' to the targets $\mu_X$ and $\mu_Y$ (and the fact that distortion is squared in \eqref{eqn:outlier_robust_gw_tau_infty}, with is inconsequential). 

We note that the work of Kong et al.\ does not address any metric properties related to the optimization problem \ref{eqn:outlier_robust_gw}. Based on its connection to the symmetrized partial Gromov-Wasserstein framework, and our results above, we conjecture that \ref{eqn:outlier_robust_gw} also fails to induce a true metric structure on the space of pmm-spaces.

\section{Robust Partial Gromov-Wasserstein Distance}\label{sec:robust_pGW}

To define a notion of partial Gromov-Wasserstein distance with more well-behaved metric properties, we introduce a construction similar to those appearing in the definitions of the Prokhorov, Ky Fan, and Robust Wasserstein metrics \cite{RSZ2024, prokhorov1956leviprokh, billingsley2013convergence}. 

\begin{convention}
    For the rest of the paper, we will work with $\pgw_{\eps,p}$ with $\eps_1 = \eps_2$, so we treat $\eps$ as a scalar.
\end{convention}

\begin{definition}[Robust Partial Gromov--Wasserstein Distance] 
For each $k \geq 0$ and $p\geq 1$, we define the associated \define{robust partial Gromov-Wasserstein distance} by
\begin{equation}
    \pgw_{p}^k(X,Y) := \inf\set{
    \frac{\eps}{1+\eps} \geq 0 : \pgw_{\eps,p}(X,Y) \leq k\eps
    }.
\end{equation}
\end{definition}

The additional parameter $k$ allows for data-dependent fine-tuning of the metric. We will see below that the choice of $k$ does not make any qualitative difference regarding the properties of $\pgw_{p}^k$. The results in this section show that $\pgw_p^k$ defines a metric with theoretical robustness guarantees.  

\subsection{Metric Properties of \texorpdfstring{$\pgw^k_p$}{PGWpk}}

The robust partial GW distances are obviously symmetric. We will now show that they are non-degenerate and satisfy the triangle inequality.

\paragraph{Non-degeneracy.}
Recall that two mm-spaces are isomorphic if there exists a measure-preserving isometric bijection between them.

\begin{theorem}[Non-Degeneracy of $\pgw^k_p$]\label{thm:nondeg-isomorphism}
    Suppose $k > 0$ and let $X$ and $Y$ be compact, fully supported pmm-spaces. Then $\pgw_{p}^k(X,Y) = 0$ if and only if $X$ and $Y$ are isomorphic.
\end{theorem}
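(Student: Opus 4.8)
The plan is to reduce both directions to the classical non-degeneracy of $\gw_p$ (Theorem \ref{thm:GW_properties}(B)) by routing through the convergence result of Theorem \ref{thm:GW-convergence}. The two structural facts I would lean on are: (i) $\pgw_{\eps,p}(X,Y) \le \gw_p(X,Y)$ for every $\eps$, since $\C_\eps(\mu_X,\mu_Y) \supseteq \C(\mu_X,\mu_Y)$ and $\pgw_{(0,0),p} = \gw_p$ by Remark \ref{rem:special cases}(A); and (ii) the map $\eps \mapsto \frac{\eps}{1+\eps}$ is a continuous, strictly increasing bijection of $[0,\infty)$ onto $[0,1)$ that vanishes only at $\eps = 0$, so controlling $\frac{\eps}{1+\eps}$ is the same as controlling $\eps$.

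For the ``if'' direction, suppose $X$ and $Y$ are isomorphic. Then $\gw_p(X,Y) = 0$ by Theorem \ref{thm:GW_properties}(B), so $\pgw_{0,p}(X,Y) = \gw_p(X,Y) = 0 \le k \cdot 0$, which means $\eps = 0$ lies in the feasible set of the defining infimum. Since $\frac{0}{1+0} = 0$, this forces $\pgw_p^k(X,Y) = 0$.

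For the ``only if'' direction, suppose $\pgw_p^k(X,Y) = 0$. First I would check the defining set is nonempty: because $\pgw_{\eps,p}(X,Y) \le \gw_p(X,Y) < \infty$ for all $\eps$ (compactness bounds the distortion) and $k > 0$, every $\eps \ge \gw_p(X,Y)/k$ satisfies the constraint $\pgw_{\eps,p}(X,Y) \le k\eps$. By fact (ii), the infimum of $\frac{\eps}{1+\eps}$ over feasible scalars being $0$ forces the existence of a sequence of feasible $\eps_n \to 0$; that is, $\pgw_{(\eps_n,\eps_n),p}(X,Y) \le k\eps_n$ for each $n$. Applying Theorem \ref{thm:GW-convergence} to the sequence $(\eps_n,\eps_n) \to (0,0)$ gives $\pgw_{(\eps_n,\eps_n),p}(X,Y) \to \gw_p(X,Y)$, and squeezing against the constraint yields
\[
\gw_p(X,Y) = \lim_{n\to\infty} \pgw_{(\eps_n,\eps_n),p}(X,Y) \le \lim_{n\to\infty} k\eps_n = 0.
\]
Hence $\gw_p(X,Y) = 0$, and Theorem \ref{thm:GW_properties}(B), using that $X$ and $Y$ are compact and fully supported, concludes that $X$ and $Y$ are isomorphic.

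The pivotal point — rather than a genuine obstacle — is recognizing that the hypothesis $k > 0$ is precisely what converts the quantitative decay $\pgw_{\eps_n,p}(X,Y) \le k\eps_n$ into the vanishing of the true distance $\gw_p(X,Y)$ via the squeeze above. Without it (for instance $k = 0$), the constraint degenerates to $\pgw_{\eps,p}(X,Y) = 0$, which by the partial-matching phenomenon quantified in Theorem \ref{thm:approxi-nondegen} and its corollary can be met at values of $\eps$ bounded away from $0$ for \emph{non}-isomorphic spaces, so non-degeneracy would fail. Everything else reduces to bookkeeping around the elementary monotone behavior of $\frac{\eps}{1+\eps}$ and the bound in fact (i).
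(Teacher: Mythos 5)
Your proof is correct and follows essentially the same route as the paper: both directions reduce to the non-degeneracy of $\gw_p$, with the ``only if'' direction extracting feasible $\eps_n \to 0$ and passing to the limit. The only cosmetic difference is that you invoke Theorem \ref{thm:GW-convergence} to handle the limit, whereas the paper inlines the same Prokhorov/weak-convergence argument to produce an exact coupling of zero distortion; the mathematical content is identical.
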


\begin{proof}
   Suppose $X$ and $Y$ are isomorphic, then $\gw_p(X,Y) = 0$, and we have  $\pgw_{\eps,p}(X,Y) \leq \gw_p(X,Y) = 0$.
    
    Suppose $\pgw_{p}^k(X,Y) = 0$.
Let $\frac{\eps_j}{1+\eps_j} \searrow 0$ (so $\eps_j \to 0$) along with a sequence of measures $\pi_j \in \C_{\eps_j}(\mu_X,\mu_Y)$ with
    \begin{equation}
        \norm{d_X - d_Y}_{L^p(\pi_j\otimes \pi_j)}\leq 2k\eps_j.
    \end{equation}
   By Prokhorov's Theorem, we can replace $(\pi_j)$ by a subsequence, still called $(\pi_j)$, such that $\pi_j \to \pi \in \P(X\times Y)$ weakly.
     Since $\eps_j \to 0$, we have 
    \begin{equation}
        (p^X)_\sharp\pi \leq \mu_X \text{ and } (p^Y)_\sharp\pi\leq \mu_Y
        \label{eq:thm:iso-1}
    \end{equation}
    Since $\pi,\mu_X,\mu_Y$ are probability measures, the marginal condition \eqref{eq:thm:iso-1} forces
    \begin{equation}
        (p^X)_\sharp\pi = \mu_X \text{ and } (p^Y)_\sharp\pi= \mu_Y.
    \end{equation}
    On the other hand, by weak convergence (or Lemma \ref{lem:distortion-continuous}(B) in the case $p = \infty$), we have
    \begin{equation}
        \norm{d_X - d_Y}_{L^p(\pi\otimes \pi)} = 0.
    \end{equation}
    Namely, $\gw_p(X,Y) = 0$. Therefore, $X$ and $Y$ are isomorphic.
\end{proof}

\paragraph{Triangle Inequality.}
Next, we show that $\pgw_{p}^k$ distance satisfies the triangle inequality.
The proof of triangle inequality makes use of the equivalence between $\pgw$ and $\pgwt$, as the analysis is simpler for the latter version of the distance. We therefore first define a robust version for $\pgwt$.

\begin{definition}[Robust $\pgwt$]
Let $X = (X,d_X,\mu_X)$ and $Y =(Y,d_Y,\mu_Y)$ be pmm-spaces.
Consider $\pgwt_{\delta,p}(X,Y)$ from Definition~\ref{def:Chapel}.
For any $k\geq0$, define 
\begin{equation}
    \pgwt_{p}^{k}(X,Y) := \inf\set{
    0\leq \delta \leq 1 : \pgwt_{1-\delta,p}(X,Y) \leq k\delta
    }.
\end{equation}   
\end{definition}

\begin{theorem}[Triangle Inequality for $\pgwt_p^k$]
\label{thm: chapel triangle inequality}
For any $1 \leq p\leq \infty$, $k\geq 0$, and pmm-spaces $X,Y,Z$, we have
\begin{equation}
    \pgwt_{p}^{k}(X,Z) \leq \pgwt_{p}^{k}(X,Y) + \pgwt_{p}^{k}(Y,Z).
\end{equation}

\end{theorem}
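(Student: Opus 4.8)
The plan is to reduce the statement to a gluing argument for \emph{partial} couplings, playing the role that the Gluing Lemma (Lemma \ref{lem:gluing-Linfinity}) plays for the approximate triangle inequality, but now in the mass-constrained formulation. Write $a = \pgwt_p^k(X,Y)$ and $b = \pgwt_p^k(Y,Z)$. The feasible set $\set{\delta \in [0,1] : \pgwt_{1-\delta,p}(\cdot,\cdot) \leq k\delta}$ always contains $\delta = 1$, since $\pgwt_{0,p} = 0 \leq k$, so it is nonempty; hence for every $\eta > 0$ I can select feasible values $\delta_1 \leq a + \eta$ and $\delta_2 \leq b + \eta$. Combining the equivalence in Proposition \ref{prop:equivalence} (which identifies $\pgwt_{1-\delta,p}$ with a rescaling of $\pgw_{\eps,p}$) with the realization result Theorem \ref{thm:realization}, I may assume these feasibility bounds are witnessed by honest minimizers: partial couplings $\pi_1 \in \Tilde{\C}_{1-\delta_1}(\mu_X,\mu_Y)$ and $\pi_2 \in \Tilde{\C}_{1-\delta_2}(\mu_Y,\mu_Z)$ with $\norm{d_X - d_Y}_{L^p(\pi_1\otimes\pi_1)} \leq k\delta_1$ and $\norm{d_Y - d_Z}_{L^p(\pi_2\otimes\pi_2)} \leq k\delta_2$. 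If $\delta_1 + \delta_2 \geq 1$ the conclusion is immediate, since $\pgwt_p^k(X,Z) \leq 1$ always (again because $\delta=1$ is feasible); so I assume $\delta_1 + \delta_2 < 1$.

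The core step is to glue $\pi_1$ and $\pi_2$ along $Y$, even though their $Y$-marginals $\nu_1 := (p^Y)_\sharp \pi_1$ and $\nu_2 := (p^Y)_\sharp \pi_2$ need not agree. Both satisfy $\nu_i \leq \mu_Y$ with $\abs{\nu_i} = 1 - \delta_i$. Let $\nu := \nu_1 \wedge \nu_2$ be their common lower envelope, i.e.\ the measure with $\mu_Y$-density $\min(d\nu_1/d\mu_Y, d\nu_2/d\mu_Y)$. Writing $f_i = d\nu_i/d\mu_Y \in [0,1]$, the pointwise bound $\min(f_1,f_2) \geq f_1 + f_2 - 1$ integrates against $\mu_Y$ (recall $\abs{\mu_Y}=1$) to the crucial mass estimate
\[
\abs{\nu} \;\geq\; \abs{\nu_1} + \abs{\nu_2} - \abs{\mu_Y} \;=\; 1 - \delta_1 - \delta_2.
\]
Using the disintegration theorem (Theorem \ref{thm:distintegration}, after normalizing to probability measures), I disintegrate each $\pi_i$ over its $Y$-marginal and re-integrate against $\nu$; since $\nu \leq \nu_i$ this produces $\pi_i' \leq \pi_i$ with $(p^Y)_\sharp\pi_i' = \nu$, and the domination $\pi_i' \leq \pi_i$ preserves the one-sided marginal constraints $(p^X)_\sharp\pi_1' \leq \mu_X$ and $(p^Z)_\sharp\pi_2' \leq \mu_Z$. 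As $\pi_1'$ and $\pi_2'$ now share the common $Y$-marginal $\nu$, the fibered-product construction from the proof of Lemma \ref{lem:gluing-Linfinity} yields $\omega \in \M_+(X\times Y\times Z)$ with $(p^{X\times Y})_\sharp\omega = \pi_1'$, $(p^{Y\times Z})_\sharp\omega = \pi_2'$, and $\abs{\omega} = \abs{\nu}$.

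Set $\pi := (p^{X\times Z})_\sharp\omega$, so that $\abs{\pi} = \abs{\nu} \geq 1 - \delta_1 - \delta_2$ and $\pi \in \Tilde{\C}_{\abs{\pi}}(\mu_X,\mu_Z)$. Applying the $L^p(\omega\otimes\omega)$ triangle inequality to $d_X - d_Z = (d_X - d_Y) + (d_Y - d_Z)$ and identifying the projections gives
\[
\norm{d_X - d_Z}_{L^p(\pi\otimes\pi)} \;\leq\; \norm{d_X - d_Y}_{L^p(\pi_1'\otimes\pi_1')} + \norm{d_Y - d_Z}_{L^p(\pi_2'\otimes\pi_2')}.
\]
Because $\pi_i' \leq \pi_i$ forces $\pi_i'\otimes\pi_i' \leq \pi_i\otimes\pi_i$ and the integrand is nonnegative, each right-hand term is dominated by the corresponding quantity for $\pi_i$, so the right side is at most $k(\delta_1 + \delta_2)$. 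Putting $\delta_3 := \delta_1 + \delta_2 \in [0,1)$ and scaling $\pi$ down to total mass $1 - \delta_3$ (possible since $\abs{\pi} \geq 1-\delta_3$, and this only shrinks the $L^p$-distortion while keeping the marginal constraints) shows $\pgwt_{1-\delta_3,p}(X,Z) \leq k\delta_3$. Hence $\delta_3$ is feasible for $\pgwt_p^k(X,Z)$, giving $\pgwt_p^k(X,Z) \leq \delta_1 + \delta_2 \leq a + b + 2\eta$; letting $\eta \to 0$ finishes the proof.

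I expect the main obstacle to be the gluing of partial couplings whose intermediate marginals disagree: the classical gluing lemma demands matching marginals, so I must first intersect the two $Y$-marginals and quantify the surviving mass. The estimate $\abs{\nu_1 \wedge \nu_2} \geq 1 - \delta_1 - \delta_2$ is precisely what converts the additive structure of the mass deficits $\delta_1,\delta_2$ into the additive bound in the triangle inequality, and it is the reason the mass-constrained formulation $\pgwt$ (rather than $\pgw$) is the convenient vehicle for this argument. A secondary technical point is checking that the restricted couplings $\pi_i'$ retain their one-sided constraints against $\mu_X$ and $\mu_Z$ and that the distortion does not increase; both follow from the monotonicity $\pi_i' \leq \pi_i$ together with the product-measure domination $\pi_i'\otimes\pi_i' \leq \pi_i\otimes\pi_i$.
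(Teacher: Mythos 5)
Your proof is correct and follows essentially the same route as the paper's: both arguments intersect the two $Y$-marginals to obtain a common sub-marginal of mass at least $1-\delta_1-\delta_2$, restrict the partial couplings to it via disintegration, and then compose. The only cosmetic difference is that you perform the composition by an explicit gluing on $X\times Y\times Z$ followed by Minkowski's inequality, whereas the paper normalizes the restricted measures to probability measures $X^c,Y^c,Z^c$ and invokes the triangle inequality for $\gw_p$, which packages the same gluing step.
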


\begin{proof}
To simplify notation, let 
\[
\Bar{\delta}_1 = \pgwt_p^k(X,Y) 
\quad \mbox{and} \quad  
\Bar{\delta}_2 = \pgwt_p^k(Y,Z).
\]
Suppose $\Bar{\delta}_1 + \Bar{\delta}_2 \geq 1$, then triangle inequality holds trivially, since
$\pgwt_p^k(X,Z) \leq 1 \leq \Bar{\delta}_1 + \Bar{\delta}_2$.

For the rest of the proof, assume
\begin{equation*}
    \Bar{\delta}_1 + \Bar{\delta}_2 < 1.
\end{equation*}

By definition, for any $\delta_1>\Bar{\delta}_1$ and $\delta_2>\Bar{\delta}_2$, we have
\[
\pgwt_{1-\delta_1,p}(X,Y) \leq k\delta_1 
\quad \mbox{and} \quad   
\pgwt_{1-\delta_2,p}(Y,Z) \leq k\delta_2.
\]
Fix arbitrary $\delta_1>\Bar{\delta}_1$ and $\delta_2>\Bar{\delta}_2$ with $\delta_1+\delta_2 \leq 1$.
Suppose that $\pgwt_{1-\delta_1,p}(X,Y)$ and $\pgwt_{1-\delta_2,p}(Y,Z)$ are realized by $\tilde{\pi}_1$ and $\tilde{\pi}_2$, respectively.
Notice that 
\begin{equation}
    (p^X)_\sharp\Tilde{\pi}_1 \leq \mu_X,\,
    (p^Y)_\sharp\Tilde{\pi}_1 \leq \mu_Y,\,
    \text{ and }
    |\tilde{\pi}_1| = 1-\delta_1.
    \label{eq:thm: chapel triangle-1}
\end{equation}
Intuitively, $\tilde{\pi}_1$ corresponds to a transport plan moving a total mass $1-\delta_1$.
Similarly, 
\begin{equation}
    (p^Y)_\sharp\Tilde{\pi}_2 \leq \mu_Y,\,
    (p^Z)_\sharp\Tilde{\pi}_2 \leq \mu_Z,\,\text{ and }
    |\tilde{\pi}_2| = 1-\delta_2.
    \label{eq:thm: chapel triangle-2}
\end{equation}
For $i = 1, 2$, set
\begin{equation*}
    \tilde{\mu}_{Y,i}:= (p^Y)_\sharp\Tilde{\pi}_i.
\end{equation*}
Then for $i = 1, 2$, 
\begin{equation*}
    |\tilde{\mu}_{Y,i}| = 1-\delta_i
\end{equation*}
We also set
\begin{equation*}
    \tilde{\mu}^c_{Y} := \tilde{\mu}_{Y,1} \wedge \tilde{\mu}_{Y,2}
\end{equation*}
where $\mu\wedge\nu(S) := \inf\set{
\mu(S\cap B) + \nu(S\setminus B) : B \text{ measurable}
}$. Intuitively, $\tilde{\mu}^c_{Y}$ registers the common mass transported by both $\tilde\pi_1$ and $\tilde\pi_2$, hence the superscript $c$.

Set $\delta_c := |\tilde{\mu}^c_{Y}| $.
Since $\tilde{\mu}^c_{Y}, \tilde{\mu}_{Y,1}, \tilde{\mu}_{Y,2} \leq \mu_Y$, we have
\begin{equation*}
    \mu_Y - \tilde{\mu}^c_{Y} = (\mu_Y - \tilde{\mu}_{Y,1}) \vee (\mu_Y - \tilde{\mu}_{Y,2})\leq (\mu_Y - \tilde{\mu}_{Y,1}) + (\mu_Y - \tilde{\mu}_{Y,2}).
\end{equation*}
Here, $\mu\vee\nu(S) := \sup\set{
\mu(S\cap B) + \nu(S\setminus B) : B \text{ measurable})
}$
Therefore, $|\mu_Y - \tilde{\mu}^c_{Y}| \leq \delta_1 + \delta_2$ and $|\tilde{\mu}^c_{Y}| = \delta_c \geq 1 - \delta_1 - \delta_2$.

Using the notation of Theorem \ref{thm:distintegration}, let $\set{\tilde\pi_1(\cdot|y)}_{y \in Y}$ and $\set{\tilde{\pi}_2(\cdot|y)}_{y \in Y}$ be the disintegration of $\tilde\pi_1$ and $\tilde\pi_2$. Define
\begin{equation*}
    d\tilde\pi_1^c(x,y) := d\tilde\pi_1(x|y)d\tilde\mu_Y^c(y)
    \quad\text{ and }\quad
    d\tilde\pi_2^c(y,z) := d\tilde\pi_2(z|y)d\tilde\mu_Y^c(y),
\end{equation*}
so that for $i = 1, 2$, 
\begin{equation*}
    \tilde\pi_i^c \leq \tilde\pi_i,\, (p^Y)_{\sharp}\tilde\pi_1^c = \tilde\mu_Y^c,\, \text{ and } \abs{\tilde\pi_i^c} = \delta_c.
\end{equation*}
We also set
\begin{equation}
    \tilde{\mu}^c_{X}:= (p^X)_\sharp\Tilde{\pi}^c_1
    \quad\text{ and }
    \quad
    \tilde{\mu}^c_{Z}:= (p^Z)_\sharp\Tilde{\pi}^c_2.
    \label{eq:thm: chapel triangle-3}
\end{equation}
In view of \eqref{eq:thm: chapel triangle-1}--\eqref{eq:thm: chapel triangle-3}, we have
\begin{equation}
    \tilde{\mu}^c_{X} \leq \mu_X
    \text{ and }
    \tilde{\mu}^c_{Z} \leq \mu_Z.
    \label{eq:thm: chapel triangle-4}
\end{equation}

We are ready to show $\pgwt_{1-\delta_1- \delta_2,p}(X,Z)\leq k\delta_1 + k\delta_2$.
Suppose $\inf_{\tilde{\pi} \in \Tilde{\C}_{\delta_c}(\mu_X,\mu_Z) }\norm{d_X - d_Z}_{L^p(\tilde{\pi}\otimes \tilde{\pi})}$ is achieved by $\tilde{\pi}^\star  \in \Tilde{\C}_{\delta_c}(\mu_X,\mu_Z)$.
Since $\frac{1-\delta_1 - \delta_2}{\delta_c}\leq 1$, we have
\begin{equation*}
    \frac{1-\delta_1 - \delta_2}{\delta_c} \cdot \tilde{\pi}^\star \in \Tilde{\C}^{1-\delta_1 - \delta_2}(\mu_X,\mu_Z).
\end{equation*}
By the definition of $\Tilde{\C}_{\delta_c}$ in \eqref{eq:coupling-chapel},
\begin{equation}
    \{\tilde{\pi} \in \mathcal{M}_+(X\times Z):\, (p^X)_\sharp\Tilde{\pi} = \tilde{\mu}^c_{X},\, (p^Z)_\sharp\Tilde{\pi} = \tilde{\mu}^c_{Z},\, \abs{\tilde{\pi}} = \delta_c \} \subseteq \Tilde{\C}_{\delta_c}(\mu_X,\mu_Z)
    \label{eq:thm: chapel triangle-5}
\end{equation}
As a consequence, 
\begin{align*}
\pgwt_{1-\delta_1-\delta_2,p}(X,Z)
&= \inf_{\tilde{\pi} \in \Tilde{\C}^{1-\delta_1-\delta_2}(\mu_X,\mu_Z) }\norm{d_X - d_Z}_{L^p(\tilde{\pi}\otimes \tilde{\pi})} \\
&\leq \norm{d_X - d_Z}_{L^p(\frac{1-\delta_1 - \delta_2}{\delta_c} \cdot \tilde{\pi}^\star\otimes \frac{1-\delta_1 - \delta_2}{\delta_c} \cdot \tilde{\pi}^\star)} \\
&\leq \norm{d_X - d_Z}_{L^p(\tilde{\pi}^\star\otimes \tilde{\pi}^\star)} \\
&=\inf_{\tilde{\pi} \in \Tilde{\C}_{\delta_c}(\mu_X,\mu_Z) }\norm{d_X - d_Z}_{L^p(\tilde{\pi}\otimes \tilde{\pi})} \\
\text{\eqref{eq:thm: chapel triangle-5}}\quad&\leq \inf_{\set{\tilde{\pi} \in \mathcal{M}_+(X\times Z):\, (p^X)_\sharp\Tilde{\pi} = \tilde{\mu}^c_{X},\, (p^Z)_\sharp\Tilde{\pi} = \tilde{\mu}^c_{Z},\, \abs{\tilde{\pi}} = \delta_c }} \norm{d_X - d_Z}_{L^p(\tilde{\pi}\otimes \tilde{\pi})}.
\end{align*}
Notice that
\[
\inf_{\set{\tilde{\pi} \in \mathcal{M}_+(X\times Z):\, \tilde{\pi}_X = \tilde{\mu}^c_{X},\, \tilde{\pi}_Z = \tilde{\mu}^c_{Z},\, \abs{\tilde{\pi}} = \delta_c }} \norm{d_X - d_Z}_{L^p(\tilde{\pi}\otimes \tilde{\pi})} = \delta_c \cdot \gw_p(X^c,Z^c),
\]
where $X^c := (X,d_X,\frac{\tilde{\mu}^c_{X}}{\delta_c})$ and $Z^c := (Z,d_Z,\frac{\tilde{\mu}^c_{Z}}{\delta_c})$ are pmm-spaces. Consider an auxiliary pmm-space $Y^c := (Y,d_Y,\frac{\tilde{\mu}^c_{Y}}{\delta_c})$.
By the triangle inequality for $\gw_p$, we have
\[
\delta_c \cdot \gw_p(X^c,Z^c)
\leq \delta_c \cdot \gw_p(X^c,Y^c) + \delta_c \cdot \gw_p(Y^c,Z^c).
\]
Therefore,
\begin{align*}
\pgwt_{1-\delta_1-\delta_2,p}(X,Z)
&\leq \inf_{\set{\tilde{\pi} \in \mathcal{M}_+(X\times Y):\, \tilde{\pi}_X = \tilde{\mu}^c_{X},\, \tilde{\pi}_Y = \tilde{\mu}^c_{Y},\, \abs{\tilde{\pi}} = \delta_c }} \norm{d_X - d_Y}_{L^p(\tilde{\pi}\otimes \tilde{\pi})} \\
&\quad + \inf_{\set{\tilde{\pi} \in \mathcal{M}_+(Y\times Z):\, \tilde{\pi}_Y = \tilde{\mu}^c_{Y},\, \tilde{\pi}_Z = \tilde{\mu}^c_{Z},\, \abs{\tilde{\pi}} = \delta_c }} \norm{d_Y - d_Z}_{L^p(\tilde{\pi}\otimes \tilde{\pi})} \\
&\leq \norm{d_X - d_Y}_{L^p(\tilde{\pi}^c_1\otimes \tilde{\pi}^c_1)} + \norm{d_Y - d_Z}_{L^p(\tilde{\pi}^c_2\otimes \tilde{\pi}^c_2)} \\
&\leq \norm{d_X - d_Y}_{L^p(\tilde{\pi}_1\otimes \tilde{\pi}_1)} + \norm{d_Y - d_Z}_{L^p(\tilde{\pi}_2\otimes \tilde{\pi}_2)} \\
&\leq k\delta_1 + k\delta_2.
\end{align*}
Since $\delta_1$ and $\delta_2$ are arbitrary, we can conclude that
\[
\pgwt_{1-\delta,p}(X,Z) \leq k \delta,
\]
for any $\delta \in (\Bar{\delta}_1 + \Bar{\delta}_2, 1]$.
Since $\pgwt_p^k(X,Z)$ is the infimum $\delta \in [0,1]$ such that $\pgwt_{1-\delta,p}(X,Z) \leq k\delta$, we have $\pgwt_p^k(X,Z) \leq \Bar{\delta}_1 + \Bar{\delta}_2$. 
\end{proof}

\begin{corollary}[Metric properties for $\pgw_p^k$]\label{cor:pgwpkmetric}
    The robust partial Gromov-Wasserstein distance $\pgw_{p}^k$ defines a metric on the space of isomorphism classes of compact, fully supported pmm-spaces.
\end{corollary}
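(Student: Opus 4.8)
The plan is to verify the four metric axioms for $\pgw_p^k$, assembling results already established in the paper. First, the function descends to isomorphism classes because $\pgw_{\eps,p}$ is invariant under measure-preserving isometries (it is built from $\gw_p$, which has this invariance), so $\pgw_p^k$ is well-defined on isomorphism classes. Non-negativity and well-definedness are then immediate: each defining set $\set{\frac{\eps}{1+\eps} : \pgw_{\eps,p}(X,Y) \leq k\eps}$ consists of nonnegative numbers, and it is nonempty whenever $k > 0$, since $\pgw_{\eps,p}(X,Y) \to 0$ as $\eps \to \infty$ (one may concentrate all mass at a single point) while $k\eps \to \infty$; hence the infimum is well-defined and nonnegative. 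Symmetry follows because we have fixed $\eps_1 = \eps_2$, so $\C_\eps(\mu_X,\mu_Y)$ corresponds to $\C_\eps(\mu_Y,\mu_X)$ under the coordinate swap on $X \times Y$, giving $\pgw_{\eps,p}(X,Y) = \pgw_{\eps,p}(Y,X)$ and hence $\pgw_p^k(X,Y) = \pgw_p^k(Y,X)$. Non-degeneracy---that $\pgw_p^k(X,Y) = 0$ if and only if $X$ and $Y$ are isomorphic---is exactly the content of Theorem \ref{thm:nondeg-isomorphism}, which is where the hypotheses $k > 0$, compactness, and full support are used.

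The only substantial step is the triangle inequality, which I would obtain by reducing to its counterpart for $\pgwt_p^k$ (Theorem \ref{thm: chapel triangle inequality}). The key observation is that $\pgw_p^k$ and $\pgwt_p^k$ coincide on pmm-spaces. To see this, I would apply the scaling equivalence of Corollary \ref{cor:equivalence}, which gives $\pgwt_{\delta',p}(X,Y) = \delta' \cdot \pgw_{\eps,p}(X,Y)$ whenever $1+\eps = 1/\delta'$. Taking $\delta' = 1-\delta$, so that $\eps = \frac{\delta}{1-\delta}$ and $\frac{\eps}{1+\eps} = \delta$, the defining constraint $\pgwt_{1-\delta,p}(X,Y) \leq k\delta$ becomes, after dividing by $1-\delta$, precisely $\pgw_{\eps,p}(X,Y) \leq k\eps$. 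Under this change of variables the quantity being minimized, $\delta$, equals $\frac{\eps}{1+\eps}$, so the two infima range over the same set of values and therefore agree:
\[
\pgw_p^k(X,Y) = \pgwt_p^k(X,Y).
\]

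With this identity in hand, the triangle inequality for $\pgw_p^k$ follows at once: for compact pmm-spaces $X,Y,Z$,
\[
\pgw_p^k(X,Z) = \pgwt_p^k(X,Z) \leq \pgwt_p^k(X,Y) + \pgwt_p^k(Y,Z) = \pgw_p^k(X,Y) + \pgw_p^k(Y,Z),
\]
using Theorem \ref{thm: chapel triangle inequality} in the middle step. I expect the main obstacle to be the careful bookkeeping in the change of variables $\delta \leftrightarrow \eps$---in particular, confirming that the feasible sets match at the boundary (the limit $\eps \to \infty$ corresponding to $\delta = 1$, where both constraints hold trivially) so that the two infima genuinely coincide rather than merely bound one another. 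Once that correspondence is pinned down, the remaining axioms are either immediate or cited, and the metric claim assembles directly.
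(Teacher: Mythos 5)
Your proposal is correct and follows essentially the same route as the paper: non-degeneracy and symmetry are cited exactly as in the paper's proof, and the triangle inequality is obtained by the same change of variables $\delta = \frac{\eps}{1+\eps}$ showing $\pgw_p^k = \pgwt_p^k$ via Proposition \ref{prop:equivalence}, then invoking Theorem \ref{thm: chapel triangle inequality}. The extra bookkeeping you supply (well-definedness of the infimum and invariance under isomorphism) is sound but not a departure from the paper's argument.
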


\begin{proof}
Non-degeneracy is already proven in Theorem \ref{thm:nondeg-isomorphism}. Symmetry is clear since we force $\eps_1 = \eps_2$. It remains to show the triangle inequality.

By Proposition~\ref{prop:equivalence}, we have 
\[
\pgwt_{1 - \delta,p}(X,Y) = \frac{1}{1+\eps} \pgw_{\eps,p}(X,Y),
\]
with $1+ \eps = \frac{1}{1 - \delta}$.
Thus,
\[
\inf\set{
\frac{\eps}{1+\eps}\geq 0 : \frac{1}{1+\eps} \pgw_{\eps,p}(X,Y) \leq k\frac{\eps}{1+\eps} 
}
= \inf\set{
0\leq \delta \leq1 : \pgwt_{1-\delta,p}(X,Y) \leq k\delta
},
\]
satisfies the triangle inequality.
\end{proof}

\paragraph{Incompleteness.} 
It was shown by example in \cite{memoli2014gw-overview} that the metric $\gw_p$ ($1\leq p < \infty$) on the space of isomorphism classes of compact pmm-spaces is incomplete. We re-use the example and show that $\pgw_p^k$ ($1\leq p < \infty$) is also incomplete. 

\begin{theorem}[Incompleteness of $\pgw_p^k$]\label{thm:incomplete}
    Let $1 \leq p < \infty$ and $k > 0$.
    The space of isomorphism classes of compact pmm-spaces endowed with the metric $\pgw_p^k$ is incomplete.
\end{theorem}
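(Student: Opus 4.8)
The plan is to recycle the incompleteness example for $\gw_p$ from \cite{memoli2014gw-overview} and transfer it to $\pgw_p^k$ using the two inequalities relating $\pgw_{\eps,p}$ to $\gw_p$ that we have already established. Recall from that reference a sequence $(X_n)$ of compact pmm-spaces, with uniformly bounded diameters (say $\diam(X_n) \le R$ for all $n$), that is Cauchy with respect to $\gw_p$ but admits no $\gw_p$-limit within the space of compact pmm-spaces (its limit in the completion is a non-compact pmm-space). Since $\mathsf{rad}_p(X) \le \diam(X)$ for any pmm-space $X$, the circumradii $\mathsf{rad}_p(X_n)$ are uniformly bounded by $R$ as well; this uniform control is what will make the error terms below vanish, and is the reason the argument is restricted to $p < \infty$.

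First I would show that $(X_n)$ is Cauchy for $\pgw_p^k$. Because $\C(\mu_X,\mu_Y) \subset \C_\eps(\mu_X,\mu_Y)$, we have $\pgw_{\eps,p}(X,Y) \le \gw_p(X,Y)$ for every $\eps$. Taking $\eps_* = \gw_p(X,Y)/k$ makes $\eps_*$ feasible in the definition of $\pgw_p^k$, whence
\[
\pgw_p^k(X,Y) \le \frac{\eps_*}{1+\eps_*} \le \eps_* = \frac{1}{k}\gw_p(X,Y).
\]
Thus the identity map is Lipschitz from $(\,\cdot\,,\gw_p)$ to $(\,\cdot\,,\pgw_p^k)$, so the $\gw_p$-Cauchy sequence $(X_n)$ is automatically $\pgw_p^k$-Cauchy.

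The crux is to show $(X_n)$ has no limit under $\pgw_p^k$. Suppose, for contradiction, that $\pgw_p^k(X_n,X_\infty) \to 0$ for some compact pmm-space $X_\infty$. Since $\eps \mapsto \pgw_{\eps,p}(X_n,X_\infty)$ is non-increasing while $\eps \mapsto k\eps$ is increasing, the feasible set in the definition of $\pgw_p^k(X_n,X_\infty)$ is an up-set, and I can select for each $n$ a feasible $\eps_n$ with $\pgw_{\eps_n,p}(X_n,X_\infty) \le k\eps_n$ and $\frac{\eps_n}{1+\eps_n} \le \pgw_p^k(X_n,X_\infty) + \frac1n$; in particular $\eps_n \to 0$. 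By Theorem \ref{thm:realization} the value $\pgw_{\eps_n,p}(X_n,X_\infty)$ is realized by a relaxed coupling, so Theorem \ref{thm:approxi-nondegen} applies and produces marginal-adjusted spaces lying within $\gw_p$-distance $2\cdot 2^{1/q}\eps_n^{1/p}\,\mathsf{rad}_p(\cdot)$ of $X_n$ and of $X_\infty$. Combining this with the triangle inequality for $\gw_p$ (Theorem \ref{thm:GW_properties}(B)) gives
\[
\gw_p(X_n,X_\infty) \le \pgw_{\eps_n,p}(X_n,X_\infty) + 2\cdot 2^{1/q}\eps_n^{1/p}\bigl(\mathsf{rad}_p(X_n) + \mathsf{rad}_p(X_\infty)\bigr) \le k\eps_n + 2\cdot 2^{1/q}\eps_n^{1/p}\bigl(R + \mathsf{rad}_p(X_\infty)\bigr).
\]
As $n \to \infty$ both terms on the right tend to $0$ (here $p < \infty$ is essential, so that $\eps_n^{1/p} \to 0$), hence $\gw_p(X_n,X_\infty) \to 0$, contradicting the choice of $(X_n)$ as a sequence with no compact $\gw_p$-limit. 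Therefore $(X_n)$ is $\pgw_p^k$-Cauchy but non-convergent, which proves incompleteness.

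I expect the main obstacle to be the non-convergence step, specifically ensuring that the error introduced by the relaxation vanishes uniformly in $n$. The whole argument hinges on the uniform bound $\mathsf{rad}_p(X_n) \le R$, which controls the additive error supplied by Theorem \ref{thm:approxi-nondegen}; this is precisely why the underlying example must be chosen with uniformly bounded diameter, and why the statement excludes $p = \infty$, where (by Remark \ref{rem:p_infinity_case}) the analogous error term carries no decaying $\eps$ factor.
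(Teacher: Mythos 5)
Your argument is correct, and it reaches the conclusion by a genuinely different route from the paper, even though both start from the same example (the reference's incompleteness witness is exactly the uniform $n$-point spaces $\Delta_n$ of diameter one, so your uniform bound $\mathsf{rad}_p(X_n)\le R$ is satisfied with $R=1$). For the Cauchy half, the paper works concretely: it exhibits an exact coupling between $\Delta_n$ and $\Delta_{nm}$ to get $\pgw_{\eps,p}(\Delta_n,\Delta_{nm})\le n^{-1/p}$, converts this into a bound on $\pgw_p^k(\Delta_n,\Delta_{nm})$, and then invokes the triangle inequality for $\pgw_p^k$ (Corollary \ref{cor:pgwpkmetric}); your one-line Lipschitz comparison $\pgw_p^k\le\tfrac1k\gw_p$ achieves the same thing more cheaply and without needing the intermediate spaces $\Delta_{nm}$. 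The real divergence is in the non-convergence half: the paper argues directly that any limit object would have to contain countably many points at pairwise distance one and hence cannot be compact, whereas you transfer the problem back to $\gw_p$ by re-running the quantitative estimate of Theorem \ref{thm:approxi-nondegen} with the uniform circumradius bound --- in effect an inline proof of the relevant direction of Theorem \ref{thm:equivalence_of_topologies}. Your version is more modular (it shows that \emph{any} uniformly bounded, $\gw_p$-Cauchy, non-convergent sequence witnesses incompleteness of $\pgw_p^k$) and is arguably more careful than the paper's somewhat informal "the limit cannot be compact" step; the paper's version buys self-containedness and elementary explicitness, and does not lean on the approximate non-degeneracy machinery. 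Both are sound, and your explanation of why $p<\infty$ is essential (the $\eps_n^{1/p}$ factor in the error term) matches the paper's Remark \ref{rem:p_infinity_case}.
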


\begin{proof}
     For each $n \in \mathbb{N}$, let $\Delta_n$ denote the compact pmm-space consisting of an $n$-point set with pairwise distance one equipped with the uniform measure. We label $\Delta_n$ by $\set{x_i}_{i \in [n]}$.
    Let $m \in \mathbb{N}$. We label $\Delta_{nm}$ by $\set{y_{jk}}_{j \in [n], k \in [m]}$. It is clear that
    \begin{equation*}
        \pi(x_i, y_{jk}) := \frac{\delta_{ij}}{nm}
        \text{ for all } i,j \in [n] \text{ and } k \in [m]
    \end{equation*}
    defines an exact coupling of $\Delta_n$ and $\Delta_{nm}$. In particular, $\pi \in \C_\eps$ for all $\eps \geq 0$ (recall that $\eps$ is a scalar in this section). Recall the distortion function $\dis_p(\cdot)$ from Definition \ref{def:distortion}.
    By a straightforward calculation, we have
    \begin{equation*}
        \pgw_{\eps,p}(\Delta_n,\Delta_{nm}) \leq \dis_p(\pi) \leq n^{-1/p}.
    \end{equation*}
    As a consequence, 
    \begin{equation*}
        \pgw_p^k(\Delta_n,\Delta_{nm}) \leq \frac{k^{-1}n^{-1/p}}{1+k^{-1}n^{-1/p}}.
    \end{equation*}
    Similarly,
    \begin{equation*}
        \pgw_p^k(\Delta_m,\Delta_{nm}) \leq \frac{k^{-1}m^{-1/p}}{1+k^{-1}m^{-1/p}}.
    \end{equation*}
    By the triangle inequality for $\pgw_p^k$, we have
    \begin{equation*}
        \pgw_p^k(\Delta_n,\Delta_m) \leq \pgw_p^k(\Delta_n,\Delta_{nm}) + \pgw_p^k(\Delta_{nm},\Delta_m) \leq \frac{k^{-1}n^{-1/p}}{1+k^{-1}n^{-1/p}} + \frac{k^{-1}m^{-1/p}}{1+k^{-1}m^{-1/p}}.
        \label{eq:incomplete-1}
    \end{equation*}
    We see that $(\Delta_n)_{n\in\mathbb{N}}$ is a Cauchy sequence with respect to $\pgw_p^k$. However, a limiting object for this sequence cannot be compact, since it
    has to contain a countably infinite set of points with pairwise distance one.
\end{proof}

\subsection{Topological Equivalence}

The distance $\pgw_p^k$ defines a metric on the space of pmm-spaces, considered up to isomorphism, as does the Gromov-Wasserstein distance $\gw_p$. A natural question is whether they induce the same topology, and we establish that this is the case in the next result.

\begin{theorem}[Equivalence of Topologies]\label{thm:equivalence_of_topologies}
    Consider the topologies induced by $\pgw_p^k$ and $\gw_p$ on the space of isomorphism classes of compact pmm-spaces. If $p \in [1,\infty)$ then the topologies are equivalent. However, the topology induced by $\pgw_\infty^k$ is strictly coarser than that induced by $\gw_\infty$. 
\end{theorem}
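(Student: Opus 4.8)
The plan is to treat the two regimes separately, and in both to lean on the elementary comparison that $\gw_p$-convergence always forces $\pgw_p^k$-convergence. This follows from a monotone bound: since $\C(\mu_X,\mu_Y)\subseteq\C_\eps(\mu_X,\mu_Y)$ we have $\pgw_{\eps,p}(X,Y)\le\gw_p(X,Y)$ for every $\eps\ge0$, so choosing $\eps=\gw_p(X,Y)/k$ makes the constraint $\pgw_{\eps,p}(X,Y)\le k\eps$ admissible in the infimum defining $\pgw_p^k$, yielding
\begin{equation*}
    \pgw_p^k(X,Y)\le\frac{\gw_p(X,Y)}{k+\gw_p(X,Y)}.
\end{equation*}
This holds for all $p\in[1,\infty]$ and immediately shows the $\pgw_p^k$-topology is coarser than (or equal to) the $\gw_p$-topology. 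As both are metric topologies, it is enough to compare convergent sequences, so the whole theorem reduces to deciding when $\pgw_p^k(X_n,X)\to0$ forces $\gw_p(X_n,X)\to0$.

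For finite $p$ I would argue the converse as follows. Given $\pgw_p^k(X_n,X)\to0$, pick $\eps_n$ with $\tfrac{\eps_n}{1+\eps_n}$ within $1/n$ of $\pgw_p^k(X_n,X)$ and $\pgw_{\eps_n,p}(X_n,X)\le k\eps_n$; then $\eps_n\to0$. Let $\pi_n\in\C_{\eps_n}(\mu_{X_n},\mu_X)$ realize the distance (Theorem \ref{thm:realization}) and form the mass-adjusted spaces $(X_n)_{\pi_n}$ and $X_{\pi_n}$ of Theorem \ref{thm:approxi-nondegen}. The triangle inequality for the genuine metric $\gw_p$ gives
\begin{equation*}
    \gw_p(X_n,X)\le\gw_p\big(X_n,(X_n)_{\pi_n}\big)+\gw_p\big((X_n)_{\pi_n},X_{\pi_n}\big)+\gw_p\big(X_{\pi_n},X\big).
\end{equation*}
The middle term equals $\pgw_{\eps_n,p}(X_n,X)\le k\eps_n\to0$ by Theorem \ref{thm:approxi-nondegen}; the last term is $\le 2\cdot2^{1/q}\eps_n^{1/p}\mathsf{rad}_p(X)\to0$ by \eqref{eq:thm:nondegen-1}, since $X$ is fixed and $\eps_n^{1/p}\to0$. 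The first term is bounded by $2\cdot2^{1/q}\eps_n^{1/p}\mathsf{rad}_p(X_n)$, again by \eqref{eq:thm:nondegen-1}. Hence $\gw_p(X_n,X)\to0$ as soon as $\eps_n^{1/p}\mathsf{rad}_p(X_n)\to0$, and it is exactly the exponent $1/p<\infty$ that provides the decaying factor $\eps_n^{1/p}$.

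For $p=\infty$ the monotone bound already shows $\pgw_\infty^k$ is coarser, so I only need strictness, which I would get from an explicit example. Take $X$ the one-point space and let $X_n$ be the two-point space $\{x_0,w\}$ with $d(x_0,w)=D$ for a fixed $D>0$ and masses $(1-\tfrac1n,\tfrac1n)$. Any exact coupling must register the mass $\tfrac1n$ at $w$, and the pair $\big((w,x_0),(x_0,x_0)\big)$ carries positive $\pi\otimes\pi$-mass with distortion $D$, so $\gw_\infty(X_n,X)=D\not\to0$; but the relaxed coupling concentrated on $(x_0,x_0)$, which drops $w$, lies in $\C_\eps(\mu_{X_n},\mu_X)$ for $\eps=\tfrac{1}{n-1}$ and has zero distortion, so $\pgw_{\eps,\infty}(X_n,X)=0\le k\eps$ and $\pgw_\infty^k(X_n,X)\le\tfrac1n\to0$. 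This matches Remark \ref{rem:p_infinity_case}: at $p=\infty$ the adjustment estimate degenerates to $\gw_\infty(X_\pi,X)\le4\,\mathsf{rad}_\infty(X)$ with no factor of $\eps$, so the first-term argument above collapses, exactly as the example predicts.

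The hard part, and the step I expect to be the genuine obstacle, is controlling $\mathsf{rad}_p(X_n)$ along the sequence in the finite-$p$ case. Without an a priori bound the first term $\eps_n^{1/p}\mathsf{rad}_p(X_n)$ need not vanish: sending a vanishing-mass outlier to distance $D_n\to\infty$ keeps $\pgw_p^k(X_n,X)\to0$ (one still drops the outlier with $\eps_n\asymp1/n$) while forcing $\mathsf{rad}_p(X_n)\to\infty$. I would therefore try to show that a $\pgw_p^k$-convergent sequence cannot have circumradii escaping to infinity---for instance by first noting that the realized spaces $(X_n)_{\pi_n}$ remain $\gw_p$-bounded (they converge to $X$) and then seeking a uniform bound on $\mathsf{rad}_p(X_n)$ from the relation $\gw_p(X_n,(X_n)_{\pi_n})\le2\cdot2^{1/q}\eps_n^{1/p}\mathsf{rad}_p(X_n)$. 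If no such bound is available in full generality, the natural fallback is to impose a uniform diameter (equivalently circumradius) bound on the family, under which the three-term estimate closes cleanly and the finite-$p$ equivalence goes through.
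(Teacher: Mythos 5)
Your proposal follows the paper's proof step for step: the same comparison $\pgw_p^k(X,Y)\le \gw_p(X,Y)/(k+\gw_p(X,Y))$ for the easy direction, the same three-term triangle inequality through the mass-adjusted spaces $(X_n)_{\pi_n}$ and $X_{\pi_n}$ for the converse at finite $p$, and essentially the same two-point example for strictness at $p=\infty$. The one place you diverge is that the paper simply asserts that the three-term bound ``tends to $0$,'' whereas you isolate the term $2\cdot 2^{1/q}\eps_n^{1/p}\mathsf{rad}_p(X_n)$ and point out that nothing controls $\mathsf{rad}_p(X_n)$ along the sequence. You are right to flag this as the genuine obstacle, and your heuristic counterexample in fact works: take $X$ the one-point space and $X_n=\{x_0,w_n\}$ with $d(x_0,w_n)=n^{1/p}$ and masses $\left(1-\tfrac1n,\tfrac1n\right)$. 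Dropping $w_n$ exactly as in Example~\ref{ex:strictly_coarser} gives a coupling in $\C_{\eps}(\mu_{X_n},\mu_X)$ with zero distortion for $\eps=\tfrac{1}{n-1}$, so $\pgw_p^k(X_n,X)\le\tfrac1n\to0$; but the unique exact coupling has $\dis_p(\pi)^p=2\,n\left(1-\tfrac1n\right)\tfrac1n\to 2$, so $\gw_p(X_n,X)\to 2^{1/p}\neq 0$. Hence no uniform bound on $\mathsf{rad}_p(X_n)$ can be extracted from $\pgw_p^k$-convergence alone: the one-sided constraint $\pi_{X_n}\le(1+\eps_n)\mu_{X_n}$ permits discarding a set of $\mu_{X_n}$-mass $\approx\eps_n$ that may sit arbitrarily far away, and that discarded mass contributes a non-vanishing amount to $\gw_p$. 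Your fallback---imposing a uniform diameter (or circumradius) bound on the family---is exactly what is needed to close the finite-$p$ argument; without it the converse implication fails.

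A second, compounding issue that you inherit by citing \eqref{eq:thm:nondegen-1} as a black box: that estimate is itself not justified for $\C_\eps$-couplings. Its derivation uses $\left\|\tfrac{d\pi_X}{d\mu_X}-1\right\|_{L^\infty(\mu_X)}\le\eps_1$ (equation \eqref{eq:nondegen-proof2}), but the one-sided constraint $\pi_X\le(1+\eps_1)\mu_X$ only bounds the density from above; it may vanish on a set of small $\mu_X$-measure, so the $L^\infty$ norm of $\tfrac{d\pi_X}{d\mu_X}-1$ can equal $1$ regardless of how small $\eps_1$ is. (The two-sided bound does hold for $\Cs_\eps$.) In the example above this is visible concretely: $(X_n)_{\pi_n}$ carries $\delta_{x_0}$, $\gw_p\big(X_n,(X_n)_{\pi_n}\big)=\gw_p(X_n,X)\to2^{1/p}$, yet the right-hand side of \eqref{eq:thm:nondegen-1} tends to $0$. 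The third term $\gw_p(X_{\pi_n},X)$ can still be salvaged because $X$ is fixed and compact---bound $\w_p\big((\pi_n)_X,\mu_X\big)$ via Theorem~\ref{thm:diam-bound} using $\abs{(\pi_n)_X-\mu_X}(X)\le 2\eps_n$ and $\diam(X)<\infty$---but the first term cannot. So your assessment is correct: the step you identify is a real gap, it is present in the paper's own proof as well, and the finite-$p$ equivalence as stated does not go through without an additional hypothesis.
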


\begin{proof}
    Since the topologies under consideration are metric topologies, it suffices to check sequential convergence. That is, we will show that, for any sequence of pmm-spaces $(X_n)_{n=1}^\infty$, 
    \begin{equation*}
        \gw_p(X_n,X) \to 0 \quad \mbox{implies} \quad \pgw_p^k(X_n,X) \to 0
    \end{equation*}
    holds in general, and that the converse holds when $p < \infty$.
    
    First assume that $\gw_p(X_n,X) \to 0$. For any $\eps \geq 0$, if $\gw_p(X_n,X) \leq k\eps$ then $\pgw_{\eps,p}(X_n,X) \leq k\eps$ (since $\pgw_{\eps,p} \leq \gw_p$, in general), so that 
    \[
    \pgw_p^k(X_n,X) = \inf\left\{\frac{\eps}{1+\eps} : \pgw_{\eps,p}(X_n,X) \leq k\eps\right\} \leq \inf\left\{\frac{\eps}{1+\eps} : \gw_{p}(X_n,X) \leq k\eps\right\} \to 0.
    \]
    
    To prove the converse, let $p < \infty$ and suppose that $\pgw_p^k(X_n,X) \to 0$. Then there exists a sequence $(\eps_n)_n$ of non-negative real numbers such that $\eps_n \to 0$ and $\pgw_{\eps_n,p}(X_n,X) \leq k \eps_n$. For each $n$, let $\pi_n \in \mathcal{C}_{\eps_n}(\mu_{X_n},\mu_X)$ be an optimal coupling (see Theorem \ref{thm:realization}), so that 
    \[
    \pgw_{\eps_n,p}(X_n,X) = \gw_p((X_n)_{\pi_n},X_{\pi_n}),
    \]
    using the notation and the equivalence \eqref{eq:thm:nondegen-2} of Theorem \ref{thm:approxi-nondegen}. Recall Definition \ref{def:ecc and rad} for the circumradius $\mathsf{rad}_p(X
    )$. 
    Applying the triangle inequality for $\gw_p$ and the estimates \eqref{eq:thm:nondegen-1}, we have 
    \begin{align*}
    \gw_p(X_n,X) &\leq \gw_p(X_n,(X_n)_{\pi_n}) + \gw_p((X_n)_{\pi_n},X_{\pi_n}) + \gw_p(X_{\pi_n},X) \\
    &\leq 2 \cdot 2^{1/q} \eps_n^{1/p} \mathsf{rad}_p(X_n) +  k \eps_n + 2 \cdot 2^{1/q} \eps_n^{1/p} \mathsf{rad}_p(X),
    \end{align*}
    so that $\gw_p(X_n,X)$ is bounded above by a sequence that tends to $0$ as $n \to \infty$. 

    Observe that the proof strategy for the converse fails when $p=\infty$, because the estimates \eqref{eq:thm:nondegen-1} are no longer effective for inducing convergence (see Remark \ref{rem:p_infinity_case}). We have already shown that the $\pgw_\infty^k$ topology is coarser than that of $\gw_\infty$. Strictness is demonstrated by Example \ref{ex:strictly_coarser} below.
\end{proof}

\begin{example}\label{ex:strictly_coarser}
    We will now construct a sequence $X_n$ of pmm-spaces which converge to a pmm-space $X$ in the $\pgw_\infty^k$ topology, but not in the $\gw_\infty$ topology. This completes the proof of the claim that the former topology is strictly coarser than that of the latter.
    
    Let $X$ denote the unique one-point pmm-space, whose point will be denoted $\star$. For each $n=2,3,\ldots$, let $X_n = (\{x,y\},d_n,\mu_n)$, where $d_n(x,y) \equiv 1$ (independent of $n$) and $\mu_n$ is defined by $\mu_n(x) = 1-\frac{1}{n}$ and $\mu_n(y) = \frac{1}{n}$. Then $\gw_\infty(X_n,X) = 1$ for all $n$, so that $X_n \not \to X$ in the $\gw_\infty$ topology. On the other hand, we claim that 
    \[
    \pgw_\infty^k(X_n,X) \leq \frac{1}{n},
    \]
    which will complete the proof. Indeed, setting $\eps = \frac{1}{n-1}$, we have
    \[
    \pgw_{\eps,\infty}(X_n,X) = 0 \leq k \eps.
    \]
    To see the equality, consider the measure $\pi$ on $X \times X_n$ defined by $\pi(\star,x) = 1$, $\pi(\star,y) = 0$. This measure belongs to $\mathcal{C}_\eps(\mu_X,\mu_n)$: obviously, $\pi_X = \mu_X$, while 
    \[
    \pi_Y(x) = 1 = \left(1+\frac{1}{n-1}\right)\left(1-\frac{1}{n}\right) = (1+\eps)\mu_n(x) \quad \mbox{and} \quad \pi_Y(y) = 0 \leq (1+\eps)\mu_Y(y).
    \]
    Moreover, $\dis_\infty(\pi) = 0$. This implies that
    \[
    \pgw_\infty^k(X_n,X) \leq \frac{\eps}{1+\eps} = \frac{\frac{1}{n-1}}{1+ \frac{1}{n-1}} = \frac{1}{n}.
    \]
\end{example}

\subsection{Robustness}
In this section, we investigate the robustness properties of our robust partial GW distance. Our robustness result factors through a more general result on the behavior of arbitrary parametric families of kernels, after being fed through a $\pgw_p^k$-type construction. We state this general result after proving a technical lemma.

\begin{lemma}\label{lem:posetlemma}
    Let $(\mathcal{E},<)$ be a partially ordered set. Suppose there is a weakly decreasing function $f:\mathcal{E}\to \mathcal{E}$ and a weakly increasing function $g:\mathcal{E}\to \mathcal{E}$ with $f(a)= g(b)$ for some $a,b\in \mathcal{E}$ such that $a\wedge b,a\vee b\in\mathcal{E}$, i.e., both the meet and join of $a$ and $b$ lie in the poset $\mathcal{E}$. Let $h:\mathcal{E}\to \mathcal{E}$ be a strictly increasing function, define the set $\mathcal{S} = \{h(\eps)\in \mathcal{E}: f(\eps)\leq g(\eps)\}$, and suppose $s = \inf \mathcal{S}\in \mathcal{S}$. Then, $s  \leq a\vee b$. Furthermore, if $g$ is strictly increasing, then $a\wedge b\leq s$.
\end{lemma}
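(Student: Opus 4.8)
The plan is to treat the two conclusions separately, in each case reducing to a membership or lower-bound statement about $\mathcal{S}$ and then invoking that $s = \inf\mathcal{S}$ is the greatest lower bound of $\mathcal{S}$. The only inputs are the monotonicity of $f$ and $g$ together with the single ``matching'' identity $f(a) = g(b)$, which I will thread through the order relations $a \leq a\vee b$, $b \leq a \vee b$, $a \wedge b \leq a$, and $a \wedge b \leq b$.

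For the bound $s \leq a \vee b$, I would show directly that $a \vee b \in \mathcal{S}$. Applying that $f$ is weakly decreasing to $a \leq a \vee b$ gives $f(a \vee b) \leq f(a)$, and applying that $g$ is weakly increasing to $b \leq a \vee b$ gives $g(b) \leq g(a \vee b)$. Chaining these through $f(a) = g(b)$ yields
\[
f(a \vee b) \leq f(a) = g(b) \leq g(a \vee b),
\]
so $f(a \vee b) \leq g(a \vee b)$, i.e. $a \vee b \in \mathcal{S}$. Since $s$ is a lower bound for $\mathcal{S}$, it lies below every member of $\mathcal{S}$, and in particular $s \leq a \vee b$. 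This half uses only weak monotonicity and goes through whenever the relevant join exists.

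For the bound $a \wedge b \leq s$ under the assumption that $g$ is strictly increasing, I would show that $a \wedge b$ is itself a lower bound for $\mathcal{S}$, whence $a \wedge b \leq \inf \mathcal{S} = s$. The computation symmetric to the one above---now using $a \wedge b \leq a$ with $f$ decreasing and $a \wedge b \leq b$ with $g$ increasing---gives
\[
g(a \wedge b) \leq g(b) = f(a) \leq f(a \wedge b).
\]
To see that $a \wedge b \leq \epsilon$ for an arbitrary $\epsilon \in \mathcal{S}$, I would argue by contradiction: if $a \wedge b \not\leq \epsilon$, so that $\epsilon < a \wedge b$, then $f(a \wedge b) \leq f(\epsilon)$ (as $f$ is weakly decreasing), while strict monotonicity of $g$ forces $g(\epsilon) < g(a \wedge b)$. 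Combining these with $g(a\wedge b) \leq f(a \wedge b)$ from above and the defining inequality $f(\epsilon) \leq g(\epsilon)$ of $\epsilon \in \mathcal{S}$ produces
\[
g(a \wedge b) \leq f(a \wedge b) \leq f(\epsilon) \leq g(\epsilon) < g(a \wedge b),
\]
a contradiction. Hence $a \wedge b \leq \epsilon$ for every $\epsilon \in \mathcal{S}$, and the claim follows.

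The main obstacle, and the sole place where strictness is genuinely needed, is the order-reflection step in the second part: passing from an inequality between the \emph{values} $f(\epsilon), g(\epsilon)$ back to an order relation between the \emph{arguments} $\epsilon$ and $a \wedge b$. This is exactly what rules out the configuration $\epsilon < a \wedge b$ and produces the strict contradiction $g(a\wedge b) < g(a \wedge b)$. In a totally ordered $\mathcal{E}$ the dichotomy ``$a \wedge b \leq \epsilon$ or $\epsilon < a \wedge b$'' is automatic, so the contradiction closes cleanly; this is the setting relevant to the application, where the roles of $f$ and $g$ are played by a $\pgw_{\eps,p}$-type quantity and by $k\eps$ on the chain $\eps \geq 0$. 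For a genuinely partial order one should be careful, since a strictly increasing $g$ need not reflect the order, and I would accordingly either restrict to the chain setting or strengthen the hypothesis on $g$ to order-reflection.
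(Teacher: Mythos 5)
Your proof is correct and follows essentially the same route as the paper's: both halves hinge on chaining monotonicity through $f(a)=g(b)$ to show that the join lies in $\mathcal{S}$, and then on using strict monotonicity of $g$ to produce a contradiction of the form $g(\cdot)<g(\cdot)$ for the meet. Your version avoids the paper's case split on $a\leq b$ versus $b<a$ and explicitly flags the comparability/trichotomy issue in a genuinely partial order --- a caveat the paper's own proof shares, since it too silently assumes $a,b$ (and $s,a$) are comparable; both arguments are airtight in the totally ordered setting of the application.
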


\begin{proof}
    Consider the case where $a\leq b$. In this case, we have $f(b)\leq f(a) = g(b)$. Hence, $b\in \mathcal{S}$, so $s \leq b = a\vee b$. Next, if $g$ is strictly increasing, then assuming $s < a$ leads to a contradiction:
    \begin{equation*}
         g(s) < g(a) \leq g(b) = f(a) \leq f(s) \leq g(s).
    \end{equation*}
    Thus, we have $s \geq a = a\wedge b$.
    
    Now, consider the case where $b< a$. Here we have $f(a) = g(b) \leq g(a)$, so $a\in \mathcal{S}$, and $s\leq a = a\vee b$. As in the previous case, for $g$ strictly increasing, if we assume $s < b$, then we reach a contradiction:
    \begin{equation*}
        g(s) < g(b) = f(a) \leq f(b) \leq f(s) \leq g(s),
    \end{equation*}
    so $s \geq b = a\wedge b$. This proves the lemma.
\end{proof}

We now give our main result for this subsection. 

\begin{theorem}\label{thm:simfamily}
    Let $\mathcal{W}$ be a set. 
    Suppose $d_\eps:\mathcal{W}\times\mathcal{W}\to\R$ is a family of symmetric functions parametrized by $\eps\geq 0$ such that $\eps\mapsto d_\eps(x,y)$ is right continuous and weakly decreasing for any fixed pair $x,y\in\mathcal{W}$. Define the family of functions $d^k(x,y) = \inf\{\frac{\eps}{1+\eps}: d_\eps(x,y)\leq k\eps \}$, for $k> 0$. Suppose there exists some $\eps>0$ such that $d_\eps(y,\tilde y)=0$. If $d^k$ satisfies the triangle inequality, then
    \begin{equation*}
        \left|d^k(x,y)-d^k(x,\tilde y)\right|\leq \eps.
    \end{equation*}
\end{theorem}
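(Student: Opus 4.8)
The plan is to prove the two one-sided estimates $d^k(x,\tilde y) \le d^k(x,y) + \delta$ and $d^k(x,y) \le d^k(x,\tilde y) + \delta$ separately. Since $d_\eps$ is symmetric in its arguments, the hypothesis $d_\delta(y,\tilde y) = 0$ is equivalent to $d_\delta(\tilde y, y) = 0$, so the second estimate follows from the first by interchanging the roles of $y$ and $\tilde y$. Thus it suffices to establish $d^k(x,\tilde y) \le d^k(x,y) + \delta$, and the full claim follows by combining the two.

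The key observation, which is where the three hypotheses on $d_\eps$ combine, is a comparison valid for all $\eps \ge \delta$. Since $\eps \mapsto d_\eps(y,\tilde y)$ is weakly decreasing and $d_\delta(y,\tilde y) = 0$, we have $d_\eps(y,\tilde y) \le d_\delta(y,\tilde y) = 0$ whenever $\eps \ge \delta$. Feeding this into the triangle inequality for the fixed-parameter function $d_\eps$ gives
\[
d_\eps(x,\tilde y) \le d_\eps(x,y) + d_\eps(y,\tilde y) \le d_\eps(x,y), \qquad \eps \ge \delta.
\]
In words, the functions $\eps \mapsto d_\eps(x,\tilde y)$ and $\eps \mapsto d_\eps(x,y)$ become comparable once the parameter exceeds $\delta$.

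With this in hand I would argue as follows. Write $c := d^k(x,y) = \inf\{\eps \ge 0 : d_\eps(x,y) \le k\eps\}$. Because $\eps \mapsto d_\eps(x,y)$ is weakly decreasing while $\eps \mapsto k\eps$ is strictly increasing, the sublevel set $\{\eps : d_\eps(x,y) \le k\eps\}$ is an up-set (a half-line), so every $\eps' > c$ satisfies $d_{\eps'}(x,y) \le k\eps'$. Fix such an $\eps'$ and set $\eps'' := \eps' + \delta \ge \delta$. Combining the displayed comparison (applicable since $\eps'' \ge \delta$), monotonicity in the parameter, and the bound at $\eps'$, I obtain
\[
d_{\eps''}(x,\tilde y) \le d_{\eps''}(x,y) \le d_{\eps'}(x,y) \le k\eps' \le k\eps'',
\]
so $\eps''$ lies in the set defining $d^k(x,\tilde y)$, whence $d^k(x,\tilde y) \le \eps'' = \eps' + \delta$. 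Letting $\eps' \searrow c$ yields $d^k(x,\tilde y) \le c + \delta = d^k(x,y) + \delta$, as desired.

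The only point requiring care is the bookkeeping between the two infima: one must shift the parameter by exactly $\delta$ so that the comparison step is licensed (this is why I pass to $\eps'' = \eps' + \delta \ge \delta$) while retaining the slack $k\eps' \le k\eps''$ on the right-hand side. I expect this interplay to be the main, and essentially the only, subtlety; monotonicity guarantees that each sublevel set is a half-line and, since $d_\eps(x,y) \le d_0(x,y)$ for all $\eps$, that each infimum is finite, so no degeneracy arises and right-continuity is not even needed for this particular bound. One could instead phrase the entire argument abstractly through Lemma \ref{lem:posetlemma}, taking $f(\eps) = d_\eps(x,\tilde y)$ and $g(\eps) = k\eps$, but the direct triangle-inequality computation above is self-contained and more transparent.
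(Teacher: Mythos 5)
Your proof is correct, but it takes a genuinely different route from the paper's. The paper first applies Lemma \ref{lem:posetlemma} (with $f(\eps)=d_\eps(y,\tilde y)$ and $g(\eps)=k\eps$, noting $f(\delta)=0=g(0)$) to conclude $d^k(y,\tilde y)\le\delta$, and then finishes by invoking the triangle inequality for the derived function $d^k$ itself. You instead work entirely at the level of the parametrized family: the shift $\eps'\mapsto\eps'+\delta$, combined with the fixed-parameter triangle inequality and monotonicity in $\eps$, shows directly that $\eps'+\delta$ lies in the sublevel set defining $d^k(x,\tilde y)$ whenever $\eps'$ lies in the one defining $d^k(x,y)$. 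Your version buys two things. First, it uses only the triangle inequality for each individual $d_\eps$, which is literally what the theorem hypothesizes; the paper's final step tacitly relies on the fact that $d^k$ inherits the triangle inequality from the $d_\eps$'s, and verifying that fact requires exactly the parameter-shifting computation you carry out (if $d_{\eps_1}(x,y)\le k\eps_1$ and $d_{\eps_2}(y,z)\le k\eps_2$ then $d_{\eps_1+\eps_2}(x,z)\le d_{\eps_1}(x,y)+d_{\eps_2}(y,z)\le k(\eps_1+\eps_2)$). Second, as you observe, you never need the infimum defining $d^k$ to be attained, so right-continuity is not used; the paper needs it to meet the hypothesis $\inf\mathcal{S}\in\mathcal{S}$ of Lemma \ref{lem:posetlemma}. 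The only thing the paper's route offers in exchange is the abstract poset formulation, whose extra generality is not exploited here. Your bookkeeping (nonemptiness of the sublevel sets via $d_\eps(x,y)\le d_0(x,y)$, the up-set property, and the limit $\eps'\searrow c$) is all in order.
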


\begin{proof}
    By Lemma \ref{lem:posetlemma}, we have $d^k(y,\tilde y)\leq \eps$. By the triangle inequality, we have
    \begin{equation*}
        d^k(x,y)\leq d^k(x,\tilde y)+d^k(\tilde y,y) \leq d^k(x,\tilde y) + \eps,
    \end{equation*}
    and
    \begin{equation*}
        d^k(x,\tilde y)\leq d^k(x,y)+d^k(y,\tilde y) \leq d^k(x,y) + \eps
    \end{equation*}
    as desired.
\end{proof}

\begin{corollary}[Robustness of $\pgw_p^k$]
\label{cor:robust}
    Let $k>0$, $p \in [1,\infty]$. Let $X,Y,\tilde{Y}$ be compact pmm-spaces. Suppose that $\pgw_{\eps,p}(Y,\tilde Y) = 0$ for some $\eps > 0$. Then
    \[
        \left|\pgw_p^k(X,Y)-\pgw_p^k(X,\tilde Y)\right|\leq \eps.
    \]
\end{corollary}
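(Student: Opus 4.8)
The plan is to deduce the statement from the metric structure of $\pgw_p^k$ established in Corollary \ref{cor:pgwpkmetric}, together with a one-variable estimate, rather than by a verbatim application of Theorem \ref{thm:simfamily}. I would flag at the outset the subtlety that makes the direct route necessary: Theorem \ref{thm:simfamily} requires each member $d_\eps$ of the parametric family to satisfy the triangle inequality, whereas the counterexample of Section \ref{sec:metric_properties_spgw} shows that $\pgw_{\eps,p}$ fails the triangle inequality for fixed $\eps$. What the argument actually needs is only the triangle inequality for the robust distance $\pgw_p^k$ itself, which is precisely what Corollary \ref{cor:pgwpkmetric} provides; combined with a bound on $\pgw_p^k(Y,\tilde Y)$, this suffices.

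First I would establish the bound $\pgw_p^k(Y,\tilde Y) \leq \delta$. Since $k > 0$ and $\delta > 0$, the hypothesis gives $\pgw_{\delta,p}(Y,\tilde Y) = 0 \leq k\delta$, so $\delta$ lies in the feasible set $\set{\eps \geq 0 : \pgw_{\eps,p}(Y,\tilde Y) \leq k\eps}$ over which the infimum defining $\pgw_p^k(Y,\tilde Y)$ is taken. Hence $\pgw_p^k(Y,\tilde Y) \leq \frac{\delta}{1+\delta} \leq \delta$. This is exactly the one-variable conclusion that Lemma \ref{lem:posetlemma} supplies abstractly (taking $f(\eps) = \pgw_{\eps,p}(Y,\tilde Y)$, $g(\eps) = k\eps$, $a = \delta$, $b = 0$), but here it follows immediately from feasibility and requires neither monotonicity nor right-continuity of $\eps \mapsto \pgw_{\eps,p}(Y,\tilde Y)$.

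With this bound in hand, I would finish using the symmetry and triangle inequality of $\pgw_p^k$ from Corollary \ref{cor:pgwpkmetric}. Applying the triangle inequality in both orders gives
\begin{equation*}
    \pgw_p^k(X,Y) \leq \pgw_p^k(X,\tilde Y) + \pgw_p^k(\tilde Y, Y) \leq \pgw_p^k(X,\tilde Y) + \delta,
\end{equation*}
and, symmetrically,
\begin{equation*}
    \pgw_p^k(X,\tilde Y) \leq \pgw_p^k(X,Y) + \pgw_p^k(Y,\tilde Y) \leq \pgw_p^k(X,Y) + \delta.
\end{equation*}
Combining these two inequalities yields $\abs{\pgw_p^k(X,Y) - \pgw_p^k(X,\tilde Y)} \leq \delta$, as desired.

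The only genuine obstacle is the one already identified, so it is worth being explicit about why the abstract theorem is not cited directly. Besides the failure of the per-$\eps$ triangle inequality, the object $d^k(X,Y) = \inf\set{\eps \geq 0 : \pgw_{\eps,p}(X,Y) \leq k\eps}$ produced by Theorem \ref{thm:simfamily} is the reparametrization $d^k = \frac{\pgw_p^k}{1 - \pgw_p^k}$ of $\pgw_p^k$, not $\pgw_p^k$ itself, and the triangle inequality for $\pgw_p^k$ does not transfer to $d^k$ (the inverse map $s \mapsto \frac{s}{1-s}$ is convex, hence superadditive). Routing the argument through Corollary \ref{cor:pgwpkmetric} sidesteps both issues at once, since $\pgw_p^k$ is known there to be a genuine metric and the reparametrization never enters.
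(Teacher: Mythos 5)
Your proof is correct, and in substance it is the same two-step argument the paper intends: first $\pgw_p^k(Y,\tilde Y)\leq\frac{\delta}{1+\delta}\leq\delta$ by feasibility of $\delta$ in the defining infimum, then the triangle inequality for $\pgw_p^k$ from Corollary \ref{cor:pgwpkmetric} applied in both orders. The difference is one of packaging: the paper's proof is a one-line appeal to Theorem \ref{thm:simfamily}, applied to the family $d_\delta:=\pgwt_{1-\delta,p}$ (not to $\pgw_{\eps,p}$), together with the equivalence $\pgwt_p^k=\pgw_p^k$ from Corollary \ref{cor:pgwpkmetric}. Using the $\pgwt$ parametrization makes $d^k$ coincide with $\pgw_p^k$ on the nose, so the reparametrization $s\mapsto\frac{s}{1-s}$ that you worry about in your last paragraph never actually enters the paper's argument (though your superadditivity observation is correct and would indeed be an obstruction if one used the family $\pgw_{\eps,p}$ directly). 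Your other objection, however, lands squarely: the counterexample in Section \ref{sec:metric_properties_spgw} together with Proposition \ref{prop:equivalence} shows that $\pgwt_{1-\delta,p}$ does \emph{not} satisfy the per-parameter triangle inequality, so the hypotheses of Theorem \ref{thm:simfamily} are not literally met by the family the paper feeds into it. Since the proof of Theorem \ref{thm:simfamily} only ever invokes the triangle inequality for the derived quantity $d^k$ --- which for this family is exactly Theorem \ref{thm: chapel triangle inequality} / Corollary \ref{cor:pgwpkmetric} --- your direct route is the correct reading and repair of the paper's argument, and it buys a proof that needs neither the monotonicity nor the right-continuity of $\eps\mapsto\pgw_{\eps,p}(Y,\tilde Y)$, only feasibility of the single value $\delta$.
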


\begin{proof} 
    Recall from the beginning of Section \ref{sec:robust_pGW} (Page \pageref{sec:robust_pGW}) that $\eps = (\eps_1,\eps_2)$ with $\eps_1 = \eps_2$.
    Thanks to Theorem \ref{thm:monotone-convergence}, $\pgw_{\eps,p}$ satisfies the assumptions on the family $d_\eps$ from Theorem \ref{thm:simfamily}. The result follows from Theorem \ref{thm:simfamily}.
\end{proof}

We restate Corollary \ref{cor:robust} in the language of Huber’s contamination model popularized in robust statistics \cite{chen2018contamination}. Let $\delta > 0$ and $L \geq 1$.
Given a pmm-space $X = (X, d_X, \mu_X)$, we call $X^c = (X, d_X, \mu_X^c)$ a \textbf{$(\delta,L)$-contamination of $X$} if
\begin{equation}
    \mu_X^c = (1-\delta)\mu_X + \delta\mu^c
    \label{eq:contaminate-1}
\end{equation}
for some outlier distribution $\mu^c \in \P(X)$ satisfying
\begin{equation}
    \mu^c \leq L\cdot \mu_X.
    \label{eq:contaminate-2}
\end{equation}
We see from \eqref{eq:contaminate-1} and \eqref{eq:contaminate-2} that if $X^c$ is a $(\delta,L)$-contamination of $X$, then 
\begin{equation}
    \left\|\frac{d\mu_X^c}{d\mu_X} \right\|_{L^\infty(\mu_X)} \leq (1-\delta) + \delta L.
    \label{eq:contaminate-3}
\end{equation}

\begin{corollary}[Robustness of $\pgw_p^k$ for contaminated models]\label{cor:robust-contamination}
    Let $k > 0$, let $\eps,\delta,L > 0$ satisfy 
    \begin{equation}
        \delta(L-1) \leq \eps,
        \label{eq:contaminate-4}
    \end{equation}
    and let $X_0$ and $X$ be compact pmm-spaces. 
    If $X^c$ is a $(\delta,L)$-contamination of $X$ then, for all $p \in [1,\infty]$,
    \begin{equation}
        \abs{\pgw_p^k(X_0, X) - \pgw_p^k(X_0,X^c)} \leq \eps.
        \label{eq:contaminate-5}
    \end{equation}
\end{corollary}

\begin{proof}
    In view of \eqref{eq:contaminate-3} and \eqref{eq:contaminate-4}, we have $\norm{\frac{d\mu_X^c}{d\mu_X}}_{L^\infty(\mu_X)} \leq 1+\eps$. Using the same notation as in Theorem \ref{thm:approxi-nondegen}, we have $X^c \sim_\eps X$ and $\pgw_{\eps,p}(X^c,X) = 0$, so that \eqref{eq:contaminate-5} follows from Corollary \ref{cor:robust}.
\end{proof}

\begin{remark}
    The derived bound \eqref{eq:contaminate-5} suggests that the Gromov-Wasserstein metric $\pgw_p^k$ provides provable robustness against contamination in the Huber model, hence the prefix ``robust'' in the name. Our result is comparable to \cite[Theorem 3.1]{RSZ2024}, where the control \eqref{eq:contaminate-4} on $ (\delta,L)$-contamination resembles Assumption (A1) therein. We also note that our notion of robustness differs from that in \cite{kong2024outlier} (see Section \ref{sect: relation to kong}), and our counterpart for \cite[Theorem 2.3]{kong2024outlier} is Theorem \ref{thm:approxi-nondegen}.
\end{remark}

 It is worth pointing out that the classical Gromov-Wasserstein distance does not enjoy a similar robustness property. Precisely, we have the following result.

\begin{proposition}
    Let $\epsilon, \delta > 0$ and $L > 1$ such that $\delta(L-1) \leq \epsilon$, and let $p \in [1,\infty]$. For any $C > 0$, there exist compact pmm-spaces $X_0$ and $X$  and a $(\delta,L)$-contamination $X^c$ of $X$ such that 
    \[
    |\gw_p(X_0,X) - \gw_p(X_0,X^c)| > C.
    \]
\end{proposition}

\begin{proof}
    Let us assume for the sake of simplicity that $L \geq 2$ and $\delta < 1$ (the remaining cases can be handled by a similar, but more involved, construction). Let $X_0$ be the one-point pmm space and let $(X,d_X,\mu_X)$ be a two-point pmm space with $X = \{x,y\}$, $\mu_X$ uniform, and $d_X(x,y) \coloneqq d$, where the value of $d$ is to be determined. Let $\mu^c$ be a Dirac measure on $x$ (i.e., $\mu^c(y) = 0$)---since we assumed $L \geq 2$, $\mu^c$ satisfies \eqref{eq:contaminate-2}. Then the measure $\mu_X^c$ on $X^c$ is as defined in \eqref{eq:contaminate-1}; explicitly, 
    \[
    \mu_X^c(x) = \frac{1}{2}(1+\delta), \qquad \mu_X^c(y) = \frac{1}{2}(1-\delta).
    \]
    By \cite[Theorem 5.1]{memoli2011-gw}, we have that 
    \[
    \gw_p(X_0,X) = \mathrm{diam}_p(X) \quad \mbox{and} \quad \gw_p(X_0,X^c) = \mathrm{diam}_p(X^c),
    \]
    where the \define{$p$-diameter} of $X$ is given by 
    \[
    \mathrm{diam}_p(X) = \|d_X\|_{L^p(\mu_X \otimes \mu_X)}.
    \]
    A simple computation shows that the $p$-diameters are given by 
    \[
    \mathrm{diam}_p(X) = \frac{d}{2^{1/p}} \quad \mbox{and} \quad \mathrm{diam}_p(X^c) = \frac{d}{2^{1/p}} (1-\delta^2)^{1/p},
    \]
    so that 
    \[
    |\gw_p(X_0,X) - \gw_p(X_0,X^c)| = \frac{d}{2^{1/p}}\left(1-(1-\delta^2)^{1/p}\right). 
    \]
    Since $1-(1-\delta^2)^{1/p}$ is strictly positive, this quantity can be made arbitrarily large by an appropriate choice of $d$. 
\end{proof}

\subsection{Numerical Implementation and Examples}\label{sec:numerics}

While the focus of this article is on theoretical properties of the robust partial GW distances introduced herein, we provide in this subsection a basic numerical implementation of the $\mathsf{mPGW_2^k}$ metric and some proof-of-concept examples for the sake of providing intuitive illustrations of its properties. More serious explorations of algorithmic aspects of the distance and its potential applications will be the subject of follow-up work.

\paragraph{Implementation.} We implemented the $\mathsf{mPGW}_2^k$ version of our metric (which is equivalent to $\mathsf{PGW}_2^k$) by searching over $\delta$ values to find a zero of the function $f_k:[0,1] \to \R$ defined by
\[
f_k(\delta) = \mathsf{mPGW}_{1-\delta,2}(X,Y) - k \delta.
\]
The function $f_k$ is approximated via an implementation of the Chapel et al.\ distance  $\mathsf{mPGW}_{1-\delta,2}$ provided in the \texttt{Python Optimal Transport} software package \cite{flamary2021pot}. This implementation uses conjugate gradient descent to approximate the solution of the optimization problem \eqref{eq:chapel_PGW}. Assuming the spaces being compared have size $\leq n$, each gradient step has computational cost bounded by $O(n^3 \log(n))$ \cite{peyre2016gromov}, leading to an overall complexity of $O(m \cdot n^3 \log(n))$ to approximate $f_k$, where $m$ is the predetermined max number of iterations allowed in the gradient descent. We find a zero of $f_k$ via a simple bisection method, which finds a zero up to a fixed tolerance $t > 0$ in $\log_2(1/t)$ iterations. The overall complexity of approximating  $\mathsf{PGW}_2^k(X,Y)$ is therefore $O(m \cdot n^3 \log(n) \log_2(1/t))$. 

\paragraph{Examples.} Our synthetic examples are primarily focused on illustrating the intuitive behavior of the $\mathsf{mPGW}_2^k$ metric. Our experiments are summarized in Figures \ref{fig:circles_and_ellipses}---\ref{fig:matching_comparison}, and details are provided below.

\begin{figure}
    \centering
    \includegraphics[width=0.9\textwidth]{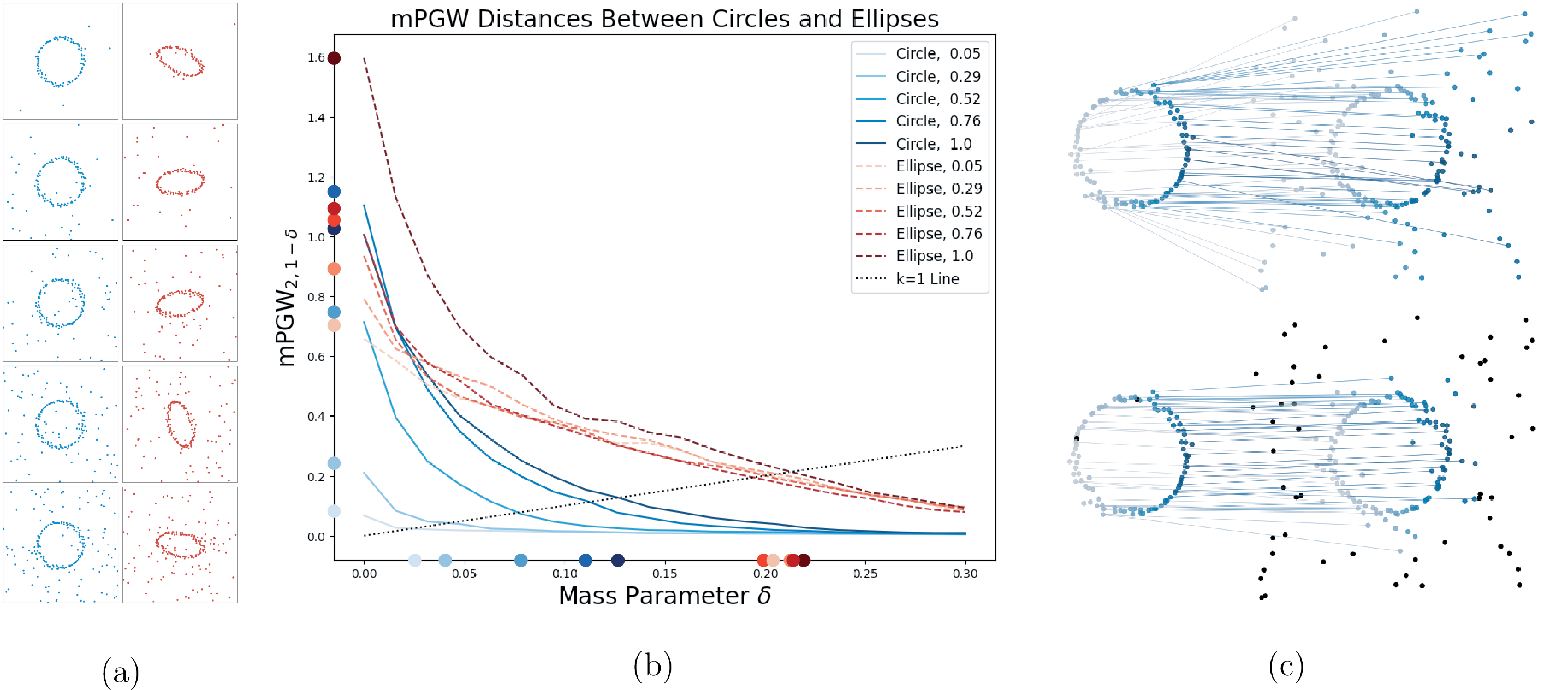}
    \caption{Illustration of $\mathsf{mPGW}_p^k$ on synthetic data (see Example \ref{ex:circles_and_ellipses} for details). {\bf (a)} Synthetic dataset of noisy circles and ellipses. {\bf(b)} Plots of $\mathsf{mPGW}_{1-\delta,2}(X_0,X)$ between the template shape $X_0$ and shapes $X$ in the dataset, with $\delta \in [0,0.3]$, and the line $y = \delta$. The values at zero are equal to $\mathsf{GW}_2(X_0,X)$, which we emphasize with colored points along the $y$-axis. The $\delta$-values where the curves intersect the $y = \delta$ line give $\mathsf{mPGW}_2^1(X_0,X)$, as we emphasize with colored points along the $x$-axis. Observe that the points on the $x$-axis give a more intuitive clustering pattern than those along the $y$-axis. {\bf (c)} Visualization of the matchings induced by Gromov-Wasserstein distance (top) and robust partial Gromov-Wasserstein distance (bottom).}
    \label{fig:circles_and_ellipses}
\end{figure}

\begin{example}[Circles and Ellipses, Figure \ref{fig:circles_and_ellipses}]\label{ex:circles_and_ellipses}
    In our first example, we work with a synthetic dataset consisting of noisy circles and ellipses in the plane. The circle dataset consists of five shapes: each consists of 100 points evenly spaced on a unit circle, to which random Gaussian perturbations are applied, together with $n$ points sampled uniformly at random from the square $[-5,5]^2$, where $n = 5, 28, 52, 76, 100$, respectively. We similarly construct randomly perturbed and rotated ellipses, each with semi-major axis  2 and semi-minor axis 1, with varying levels of uniform noise---see Figure \ref{fig:circles_and_ellipses}(a). Each shape $X$ is treated as pmm spaces by endowing it with Euclidean distance $d_X$ and a \textit{density-based probability measure} $\mu_X$, defined as follows. For a predetermined radius $r$ (we use $r = 2$ in our examples), for each point $x \in X$, $\mu_X$ is the normalization of the (non-probability) measure $\hat{\mu}_X$ with 
    \begin{equation}\label{eqn:density_based_measure}
    \hat{\mu}_X(x) = |\{x' \in X \mid d_X(x,x') \leq r\}|/|X|,
    \end{equation}
    where $|\cdot|$ is used to denote the cardinality of each set.

    We provide intuition for the behavior of $\mathsf{mPGW}_2^k$ (with $k=1$) as follows. We construct another circle with random perturbations, but with no additive uniform noise and refer to this as the \textit{template shape}, $X_0$. For each shape $X$ in our dataset of noisy circles and ellipses, we compute $\mathsf{mPGW}_{1-\delta,2}(X_0,X)$ for $\delta \in [0,0.3]$ and plot the resulting curves in Figure \ref{fig:circles_and_ellipses}(b). When $\delta = 0$, this corresponds to $\gw_2(X_0,X)$; this is emphasized in the plot by drawing colored dots along the $y$-axis corresponding to these distances. We also plot the line $y = k\delta$ over this range. The $\delta$-value of intersection of the curve  $\mathsf{mPGW}_{1-\delta,2}(X_0,X)$ with the line $y = k\delta$ is equal to $\mathsf{mPGW}_2^k(X_0,X)$; this is emphasized in the plot by drawing colored dots along the $x$-axis corresponding to these intersections. Observe that the GW distances along the $y$-axis mix the circle and ellipse classes and do not display a strong intuitive pattern. On the other hand, the dots along the $x$-axis---corresponding to our robust partial GW distance---cluster the circles and the ellipses in an intuitive manner. 

    The source of the robustness of our metric is intuitively explained by Figure \ref{fig:circles_and_ellipses}(c). We first show the matching between the template shape $X_0$ and one of the noisy circles $X$ induced by GW distance. Specifically, we begin with a GW optimal coupling $\pi$ (i.e., a coupling realizing $\mathsf{GW}_2(X_0,X)$) and infer a matching by declaring point $x_i$ in $X_0$ to be \textit{matched} to a point $x_j$ in $X$ if $i = \max_{i} \pi(i,j)$. This matching is illustrated in two ways: (1) by coloring the points of $X_0$ and transferring that coloring to $X$ via the matching, and (2) by drawing lines between a subset (40\%) of the matched points. The plot illustrates the fact that the GW matching forces the noise points to be matched. On the other hand, the same visualization method is used to show the matching induced by $\mathsf{mPGW}_2^k$. Here, points colored black are not involved in any matching; we see that the robust partial GW distance automatically filters out the majority of the noise points from the matching.  
\end{example}

\begin{figure}
    \centering
    \includegraphics[width=0.95\textwidth]{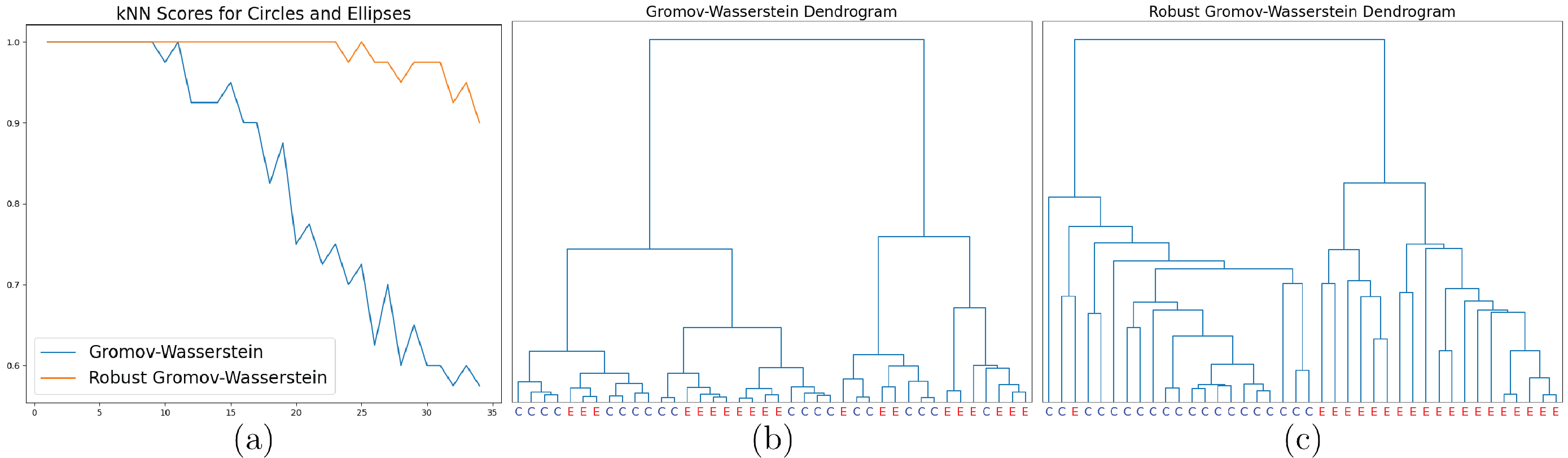}
    \caption{Quantitative evaluation of $\mathsf{mPGW}_p^k$ on synthetic data (see Example \ref{ex:clustering_circles_ellipses} for details). {\bf (a)} $K$-Nearest Neighbor scores for Gromov-Wasserstein $\mathsf{GW}_2$ and partial robust Gromov-Wasserstein $\mathsf{mPGW}_2^1$ distances on a synthetic dataset of noisy circles and ellipses. {\bf (b)} Dendrogram of the Gromov-Wasserstein distance matrix, with respect to the Ward linkage.  {\bf (c)} Dendrogram of the robust partial Gromov-Wasserstein distance matrix, with respect to the Ward linkage. Observe that $\mathsf{mPGW}_2^1$ gives a more consistent clustering pattern with respect to shape class than $\mathsf{GW}_2$.}
    \label{fig:clustering}
\end{figure}

\begin{example}[Clustering Circles and Ellipses, Figure \ref{fig:clustering}]\label{ex:clustering_circles_ellipses}
    This example gives a more quantitative picture of the behavior of $\mathsf{PGW}_2^1$ by extending the synthetic dataset considered in Example \ref{ex:circles_and_ellipses}. Here, we construct a similar dataset of noisy circles and ellipses, with 20 shapes from each class. All settings are the same, except the number of points in the additive uniform noise runs from 5 to 300. We compute the pairwise distance matrices for the dataset of 40 shapes with respect to $\gw_2$ and $\mathsf{mPGW}_2^1$, and quantify the clustering properties with respect to the shape classes in two ways.

    For each distance matrix, we compute a $K$-Nearest Neighbors classification score, with $K = 1,2,\ldots,35$ as follows: for a fixed $K$, the classification score is the percentage of shapes in the dataset with the property that more than $K/2$ of its nearest neighbors belong to the same class. The scores for each metric are plotted in Figure \ref{fig:clustering}(a). We see that both metrics perform well for small values of $K$, but that the GW distance scores degrade more rapidly as $K$ grows than those of the robust partial GW distance, indicating that there is stronger class clustering with respect to the latter metric. 

    We further illustrate the clustering properties of the metrics by displaying dendrograms for each matrix. The dendrograms are computed via a \textit{Ward} linkage. Leaves of each dendrogram are labeled by class (blue `C' for circles and red `E' for ellipses). The clustering structure for GW distance revealed by the dendrogram in Figure \ref{fig:clustering}(b) shows that the two classes are relatively mixed. On the other hand, the robust partial GW dendrogram, shown in Figure \ref{fig:clustering}(c), illustrates that the classes are almost perfectly clustered.
\end{example}

\begin{figure}[htbp]
    \centering

    \begin{subfigure}[b]{0.45\textwidth}
        \centering
        \fbox{\includegraphics[width=\textwidth]{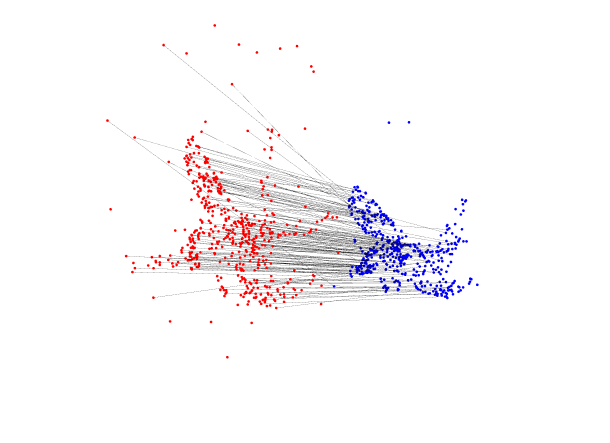}}
        \caption{Airplane - GW matching}
        \label{fig:airplane_pgw}
    \end{subfigure}
    \hfill
    \begin{subfigure}[b]{0.45\textwidth}
        \centering
        \fbox{\includegraphics[width=\textwidth]{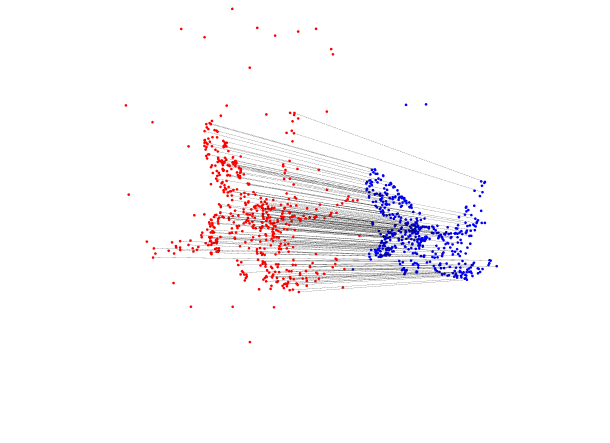}} 
        \caption{Airplane - mPGW matching}
        \label{fig:airplane_mpwg}
    \end{subfigure}
    \vspace{0.5cm}

    \begin{subfigure}[b]{0.45\textwidth}
        \centering
        \fbox{\includegraphics[width=\textwidth]{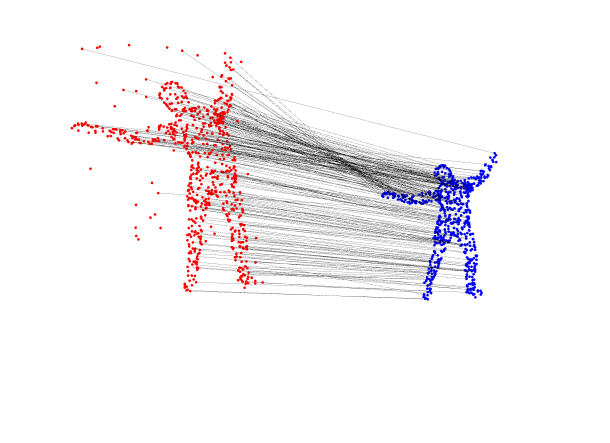}}
        \caption{Person - GW matching}
        \label{fig:airplane_pgw}
    \end{subfigure}
    \hfill
    \begin{subfigure}[b]{0.45\textwidth}
        \centering
        \fbox{\includegraphics[width=\textwidth]{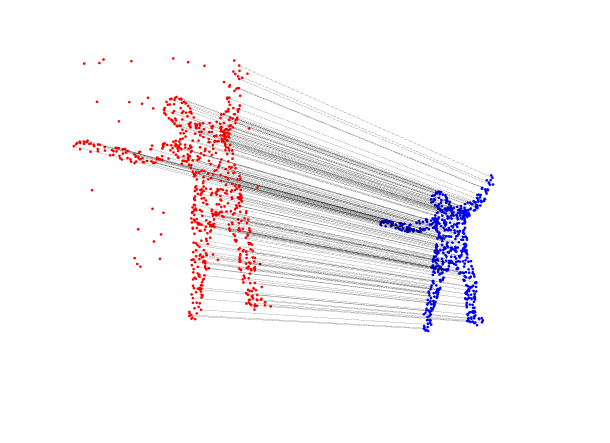}}
        \caption{Person - mPGW matching}
        \label{fig:airplane_mpwg}
    \end{subfigure}
    \vspace{0.5cm}

    \begin{subfigure}[b]{0.45\textwidth}
        \centering
        \fbox{\includegraphics[width=\textwidth]{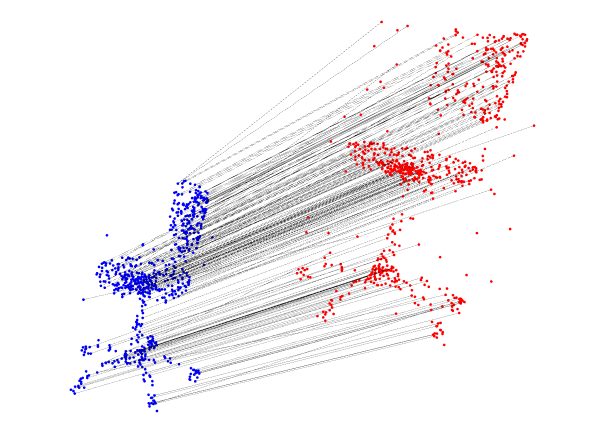}}
        \caption{Chair - GW matching}
        \label{fig:airplane_pgw}
    \end{subfigure}
    \hfill
    \begin{subfigure}[b]{0.45\textwidth}
        \centering
        \fbox{\includegraphics[width=\textwidth]{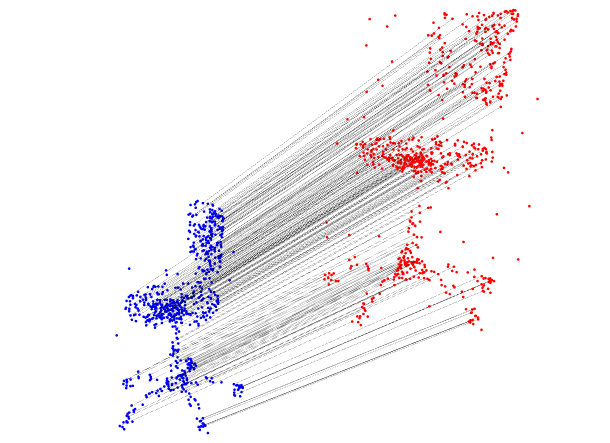}} 
        \caption{Chair - mPGW matching}
        \label{fig:airplane_mpwg}
    \end{subfigure}
    \vspace{0.5cm}

    \caption{Visual comparisons of the matching results for the airplane, person, and chair, using GW and mPGW.}
    \label{fig:matching_comparison}
\end{figure}

\begin{example}[3D Matching Comparison, Figure \ref{fig:matching_comparison}]\label{ex:matching_comparison} 
In this experiment, we used 3D mesh data from the ModelNet40 dataset, which contains high-resolution meshes of various objects\cite{wu2015-modelnet}. These meshes are then converted into 3D point clouds by sampling points from their surfaces. Although the data set includes a variety of objects, we focus on three representative objects: an airplane, a person, and a chair. These objects are represented as 3D point clouds, and we compare the matching results between the Gromov-Wasserstein distance metric and the robust partial Gromov-Wasserstein distance metric. These metrics are used to match noisy 3D point clouds to a template shape.

For each object, noisy point clouds are generated by introducing random perturbations and shifts to the original point cloud. The goal is to assess how each metric performs in terms of matching noisy data to the template shape. The matching process is visualized by connecting corresponding points between the template and the noisy shape. A subset of these connections is highlighted for clarity, with the lines displayed controlled by the line percent parameter, set to $30\%$ of the total matched pairs.

The visualizations are arranged such that the results from GW distance are shown first, followed by the results from mPGW, demonstrating how mPGW improves robustness by better handling noise and focusing on the essential structure of the object. It should be noted that the 3D results are presented as 2D screenshots taken from different perspectives.

In the first row of Figure \ref{fig:matching_comparison}, the results for the airplane are shown: The first plot (Figure \ref{fig:matching_comparison}(a)) illustrates the optimal matching using the GW distance, with noisy points shown in red and template points in blue. As expected, GW-based matching tends to match noise points more frequently, leading to a less robust result with more connections, some of which may be due to noise. In the second plot (Figure \ref{fig:matching_comparison}(b), we repeat the matching process using the mPGW distance, where the matching is more robust, excluding noise points and highlighting only the most relevant correspondences.

In the second row, we show the results for the person object. The matching process follows the same structure, first showing GW (Figure \ref{fig:matching_comparison}(c)), where GW not only struggles with noise but also incorrectly matches parts of the object, such as the hands, leading to erroneous correspondences. In the second plot (Figure \ref{fig:matching_comparison}(d)), we repeat the matching process using the mPGW distance, where the matching is more robust, excluding both noise points and incorrect correspondences, focusing on the most relevant correspondences.

Finally, in the third row, the results for the chair object are displayed, first with GW (Figure \ref{fig:matching_comparison}(e)) and then with mPGW (Figure \ref{fig:matching_comparison}(f)).

The comparison emphasizes the advantages of the mPGW metric in filtering out noise and providing a more accurate matching, especially when dealing with noisy 3D point clouds. Cleaner and more meaningful matches in the mPGW plots demonstrate the robustness of the metric to preserve the essential shape structure.

\end{example}

\section*{Acknowledgements} This work was initiated during the AMS Mathematical Research Communities (MRC) Workshop on \textit{Mathematics of Adversarial, Interpretable, and Explainable AI}, in June 2024. We would like to thank the AMS,  and especially the staff who helped organize and run the workshop, for providing a very fruitful working environment. The AMS MRC program was funded by NSF grant DMS 1916439. In addition, Needham was partially supported by NSF grants DMS 2107808 and 2324962. Li was partially supported by NSF grant DMS 2410140.

\bibliographystyle{plain}
\bibliography{pgw}

\end{document}